\numberwithin{equation}{section}
\numberwithin{figure}{section}
\theoremstyle{plain}
\newtheorem{thm}{\protect\theoremname}[section]
  \theoremstyle{definition}
  \newtheorem{defn}[thm]{\protect\definitionname}
  \theoremstyle{plain}
  \newtheorem{prop}[thm]{\protect\propositionname}
  \theoremstyle{plain}
  \newtheorem{lem}[thm]{\protect\lemmaname}
  \providecommand{\definitionname}{Definition}
  \providecommand{\lemmaname}{Lemma}
  \providecommand{\propositionname}{Proposition}
\providecommand{\theoremname}{Theorem}
\begin{document}

\title{Asymptotic Properties of Random Voronoi Cells with Arbitrary Underlying
Density}

\author{Isaac Gibbs and Linan Chen}
\begin{abstract}
We consider the Voronoi diagram generated by $n$ i.i.d. $\mathbb{R}^{d}$-valued
random variables with an arbitrary underlying probability density
function $f$ on $\mathbb{R}^{d}$, and analyze the asymptotic behaviours
of certain geometric properties, such as the measure, of the Voronoi
cells as $n$ tends to infinity. We adapt the methods used by Devroye
\emph{et al.} (2017) to conduct a study of the asymptotic properties
of two types of Voronoi cells: 1, Voronoi cells that have a fixed
nucleus; 2, Voronoi cells that contain a fixed point. For the first
type of Voronoi cells, we show that their geometric properties resemble
those in the case when the Voronoi diagram is generated by a homogeneous
Poisson point process. For the second type of Voronoi cells, we determine
the limiting distribution, which is universal in all choices of $f$,
of the re-scaled measure of the cells. For both types, we establish
the asymptotic independence of measures of disjoint cells. 
\end{abstract}

\keywords{point process with arbitrary density, geometry of random Voronoi
diagram, convergence in distribution}

\subjclass[2000]{60G55, 60D05}

\thanks{The first author was supported by the Natural Sciences and Engineering
Research Council (NSERC) of Canada Undergraduate Summer Research Awards
during the process of this work. The second author was partially supported
by the NSERC Discovery Grant (No. 241023).}

\address{Department of Mathematics and Statistics, McGill University, 805
Sherbrooke Street West, Montreal, Quebec, H3A 0B9, Canada.}
\maketitle

\section{Introduction}

\noindent The original motivation of our work is to study the problem
of ascertaining information about an arbitrary probability density
function $f$ over $\mathbb{R}^{d}$ from an independent and identically
distributed (i.i.d.) sample of data whose distribution has $f$ being
the density. This field has a rich history dating back to Rosenblatt's
original proposal of the kernel estimator in \cite{Rosenblatt1956}.
The literature that followed this explores a large variety of potential
solutions and applications. An introduction to some of the main modern
techniques, including histogram and kernel estimators and their applications,
can be found in \cite{White2015}, while alternative approaches based
on combinatorial methods for parameter selection, including results
for both kernel and wavelet estimators, can be found in \cite{Devroye2001}.
Besides, \cite{Devroye2015} provides a contemporary review of nearest
neighbour based techniques.

Rather than focusing on a particular estimator, this work investigates
the relationship between a well studied object, the Voronoi diagram,
and the underlying density. Given $f$ a probability density function
on $\mathbb{R}^{d}$, we will always denote by $\mu_{f}$ the measure
on $\mathbb{R}^{d}$ with $f$ being the density. Let $n\geq1$ be
an integer, and $X_{1},\cdots,X_{n}$ be i.i.d. random variables with
distribution $\mu_{f}$. In the context of this article, we often
refer to $\left\{ X_{1},\cdots,X_{n}\right\} $ as a \emph{point process}.
We denote by $\left\Vert x-y\right\Vert $ the Euclidean distance
between $x$ and $y\in\mathbb{R}^{d}$. For every $i\in\left\{ 1,\cdots,n\right\} $,
we define 
\[
A_{n}\left(X_{i}\right):=\left\{ p\in\mathbb{R}^{d}:\forall j\in\left\{ 1,\cdots,n\right\} \backslash\left\{ i\right\} ,\ ||p-X_{i}||\leq||p-X_{j}||\right\} 
\]
to be the \emph{Voronoi cell} with \emph{nucleus} $X_{i}$ and we
call the collection of cells, $\left\{ A_{n}\left(X_{1}\right),\cdots,A_{n}\left(X_{n}\right)\right\} $,
the \emph{Voronoi diagram} generated by $\left\{ X_{1},\cdots,X_{n}\right\} $.
By noting that each cell is formed by an intersection of half spaces,
one may immediately observe that this defines a partition of $\mathbb{R}^{d}$
into convex polytopes\footnote{Here we also refer to an unbounded convex set formed by an intersection
of half spaces as a ``polytope''. }. Generally speaking, we are interested in studying the asymptotic
behaviour of the Voronoi diagram without assuming any \emph{a priori}
knowledge of the underlying density $f$. Our goal is to establish
properties of the cells as functions (or functionals) of a general
density $f$, or alternatively, to obtain information on $f$ based
solely on the behaviours of the Voronoi diagram.

The applications of Voronoi diagrams span far beyond density estimation
into fields such as astronomy (\cite{Neyrinck2008}), cryptography
(\cite{Pedro2012}), and telecommunication (\cite{Andrews2011}).
More pertinently, these diagrams share a natural link to nearest-neighbour-based
estimation methods, where their study has recently been used to develop
an estimator for the residual variance (\cite{Devroye2018}). For
a more comprehensive overview of the properties of these objects and
their applications we refer the interested reader to \cite{Okabe2000}.\\

\noindent Despite the extensive interest in these structures, previous
work on Voronoi diagrams has largely focused on investigating the
``typical cell'', in the Palm sense (\cite{Moller1994}), in the
case where the sample points arise from a homogeneous Poisson point
process on $\mathbb{R}^{d}$. A number of statistics have been calculated
in this setting, including the first and the second moments of various
geometric quantities, such as the volume, the surface area, and the
number of faces/edges (see, e.g., \cite{Brakke2005a,Brakke2005b,Gilbert1962,Meijering1953,Hayen2002,Henrich1998,Henrich2008}),
and many attempts have been made to estimate the distributions of
these random variables through simulations (see \cite{Vittorietti2017}
and reference therein). A recent article by Devroye \textit{et al.}
(\cite{Devroye2017}) extends this notion of the ``typical'' cell
to the setting presented above, where the diagram is obtained from
$n$ i.i.d. random variables with an arbitrary density function $f$.
More precisely, given a density $f$ on $\mathbb{R}^{d}$ and $n\geq$1,
let $\mu_{f}$ and $\left\{ X_{1},\cdots,X_{n}\right\} $ be the same
as above. We denote by $A_{n}\left(x\right)$ the Voronoi cell with
fixed nucleus $x\in\mathbb{R}^{d}$ in the Voronoi diagram generated
by $\left\{ x,X_{1},\cdots,X_{n}\right\} $ and by $D_{n}^{A}\left(x\right)$
the diameter of $A_{n}\left(x\right)$. Throughout the article, we
restrict our study to the case where $f\left(x\right)>0$ and $x$
is a Lebesgue point of $f$ in the sense that 
\[
\lim_{r\rightarrow0}\frac{1}{\lambda\left(B_{x,r}\right)}\int_{B_{x,r}}\left|f\left(u\right)-f\left(x\right)\right|du=0,
\]
where $\lambda$ is the Lebesgue measure on $\mathbb{R}^{d}$, ``$du$''
is the short form of ``$\lambda\left(du\right)$'', and for every
$z\in\mathbb{R}^{d}$, $r>0$, $B_{z,r}$ is the open ball centered
at $z$ with radius $r$. Then, by the Lebesgue Differentiation Theorem,
$\lambda-$almost every $x\in\mathbb{R}^{d}$ is a Lebesgue point
of $f$, and hence $\mu_{f}-$almost every $x\in\mathbb{R}^{d}$ is
a Lebesgue point of $f$ such that $f\left(x\right)>0$. In this setting,
the authors of \cite{Devroye2017} give a complete characterization
of the limiting distribution, as $n\rightarrow\infty$, of $n\mu_{f}\left(A_{n}\left(x\right)\right)$
by determining the limit of all the moments of $n\mu_{f}\left(A_{n}\left(x\right)\right)$.
Notably, they discover that the limiting distribution of $n\mu_{f}\left(A_{n}\left(x\right)\right)$
is universal in all choices of $f$. In addition, they also show that,
as $n\rightarrow\infty$, $D_{n}^{A}\left(x\right)$ decays probabilistically
at a rate of $n^{-\frac{1}{d}}$.\\

\noindent We hope to extend this work in multiple ways. In Section
2, we revisit the geometric properties of $A_{n}\left(x\right)$ the
cell with fixed nucleus $x$. We discover that when $n$ is large,
$A_{n}\left(x\right)$ can be ``approximately'' viewed as having
arisen from the Voronoi diagram generated by a homogeneous Poisson
point process with the constant parameter $nf\left(x\right)$. Thus,
previous results characterizing the distributions and the moments
of geometric parameters of Voronoi cells generated by Poisson point
processes can be ``transferred'' to the setting with a general underlying
density. In order to state this result more precisely, we now introduce
some basic notations and terminologies. Throughout the article, we
will assume that all the concerned point processes and random variables
are defined on a generic probability space $\left(\Omega,\mathcal{F},\mathbb{P}\right)$.
For every set $A\in\mathcal{F}$, $\mathbb{I}_{A}$ is the indicator
function of $A$. For any point process $\left\{ X_{1},\cdots,X_{n}\right\} $
and any Borel set $B\subseteq\mathbb{R}^{d}$, we define the random
variable
\[
N_{\left\{ X_{1},\cdots,X_{n}\right\} }\left(B\right):=\sum_{i=1}^{n}\mathbb{I}_{\left\{ X_{i}\in B\right\} },
\]
i.e., $N_{\left\{ X_{1},\cdots,X_{n}\right\} }\left(B\right)$ is
the number of points among $\left\{ X_{1},\cdots,X_{n}\right\} $
that fall inside $B$. 
\begin{defn}
\noindent Given $\Lambda>0$, a \emph{homogeneous} \emph{Poisson point
process} with parameter $\Lambda>0$ is a point process $\left\{ Y_{1},\cdots,Y_{n},\cdots\right\} $
with the property that for any positive integer $k$, any bounded
and disjoint sets $B_{1},\cdots,B_{k}\subseteq\mathbb{R}^{d}$, and
any non-negative integers $m_{1},\cdots,m_{k}$, 
\[
\mathbb{P}\left(N_{\left\{ Y_{1},\cdots,Y_{n},\cdots\right\} }\left(B_{i}\right)=m_{i},\;\forall i=1,2,\cdots,k\right)=\prod_{i=1}^{k}\frac{\left(\Lambda\lambda\left(B_{i}\right)\right)^{m_{i}}}{m_{i}!}e^{-\Lambda\lambda(B_{i})}.
\]
\end{defn}

\noindent We will denote by $P_{\Lambda}:=\left\{ Y_{1},\cdots,Y_{n},\cdots\right\} $
such a homogeneous Poisson point process. As we mentioned above, previous
work on the Voronoi diagram generated by $P_{\Lambda}$ has focused
on the study of the ``typical'' or ``average'' cell as defined
by the Palm calculus. This is equivalent to studying the cell with
fixed nucleus $x\in\mathbb{R}^{d}$ in the Voronoi diagram generated
by $\left\{ x\right\} \cup P_{\Lambda}$. Given $x\in\mathbb{R}^{d}$
fixed, assume that $f$ is a probability density function on $\mathbb{R}^{d}$
with $f\left(x\right)>0$. For every $n\geq1$, setting $\Lambda:=nf\left(x\right)$,
we will denote by $P_{n}\left(x\right)$ the cell with nucleus $x$
in the Voronoi diagram generated by the point process $\left\{ x\right\} \cup P_{nf\left(x\right)}$.
With these notations in hand, we can now precisely state the main
result in Section 2.
\begin{thm}
\noindent \label{thm:convergence in distribution main result in Section 2}Let
$f$ be a probability density function on $\mathbb{R}^{d}$ and $x$
be a Lebesgue point of $f$ such that $f(x)>0$. Assume that 
\[
G:\left\{ \text{convex polytopes in }\mathbb{R}^{d}\right\} \rightarrow\mathbb{R}
\]
is any function such that $\forall n\ge1$, $G\left(A_{n}\left(x\right)\right)$
is measurable with respect to $\sigma\left(X_{1},\cdots,X_{n}\right)$,
and $G\left(P_{n}\left(x\right)\right)$ is measurable with respect
to $\sigma\left(P_{nf(x)}\right)$ (e.g. $G(\cdot)$ could be the
number of faces or edges of its input). Then, 
\[
\lim\limits _{n\rightarrow\infty}\sup_{z\in\mathbb{R}}\left|\mathbb{P}\left(G\left(A_{n}\left(x\right)\right)\leq z\right)-\mathbb{P}\left(G\left(P_{n}\left(x\right)\right)\leq z\right)\right|=0.
\]
\end{thm}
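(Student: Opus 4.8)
The plan is to localise the problem near $x$: I will show that, with probability tending to $1$, the cell $A_{n}(x)$ is determined by the sample points lying in a ball of radius $2Mn^{-1/d}$ around $x$, that the same holds for $P_{n}(x)$, and that the laws of the two resulting localised point configurations are close in total variation; pushing forward by $G$ then controls the Kolmogorov distance in the statement, and letting $M\to\infty$ finishes the proof. Fix a constant $M>0$, set $R_{n}:=2Mn^{-1/d}$, and let $\mathcal{X}_{n}:=\{X_{1},\dots,X_{n}\}\cap B_{x,R_{n}}$ and $\mathcal{P}_{n}:=P_{nf(x)}\cap B_{x,R_{n}}$. Writing $\Phi(\mathcal{C})$ for the Voronoi cell of $x$ in the diagram generated by $\{x\}\cup\mathcal{C}$, set $\widetilde{A}_{n}(x):=\Phi(\mathcal{X}_{n})$ and $\widetilde{P}_{n}(x):=\Phi(\mathcal{P}_{n})$, which are measurable functions of $\mathcal{X}_{n}$ and of $\mathcal{P}_{n}$, respectively. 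The key elementary observation is that, since $x\in\widetilde{A}_{n}(x)$, if $\mathrm{diam}(\widetilde{A}_{n}(x))\le Mn^{-1/d}$ then every $p\in\widetilde{A}_{n}(x)$ has $\|p-x\|\le Mn^{-1/d}$, so any $X_{j}$ with $\|X_{j}-x\|>R_{n}$ satisfies $\|p-X_{j}\|\ge\|X_{j}-x\|-\|p-x\|>\|p-x\|$, while the constraints from the $X_{j}$ inside $B_{x,R_{n}}$ hold because $p\in\widetilde{A}_{n}(x)$; thus $\widetilde{A}_{n}(x)\subseteq A_{n}(x)$, and as the reverse inclusion always holds, $A_{n}(x)=\widetilde{A}_{n}(x)$ on the $\sigma(\mathcal{X}_{n})$-measurable event $F_{n}^{A}:=\{\mathrm{diam}(\widetilde{A}_{n}(x))\le Mn^{-1/d}\}$; identically, $P_{n}(x)=\widetilde{P}_{n}(x)$ on $F_{n}^{P}:=\{\mathrm{diam}(\widetilde{P}_{n}(x))\le Mn^{-1/d}\}$.

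The second and substantive step is to show that, for each fixed $M$, $\mathrm{TV}\bigl(\mathrm{Law}(\mathcal{X}_{n}),\mathrm{Law}(\mathcal{P}_{n})\bigr)\to0$ as $n\to\infty$. Both $\mathcal{X}_{n}$ and $\mathcal{P}_{n}$ are mixed-sample processes on $B_{x,R_{n}}$: the number of points of $\mathcal{X}_{n}$ is $\mathrm{Bin}(n,q_{n})$ with $q_{n}:=\mu_{f}(B_{x,R_{n}})$ and, given that number, the points are i.i.d. with density $f\,\mathbb{I}_{B_{x,R_{n}}}/q_{n}$, whereas the number of points of $\mathcal{P}_{n}$ is $\mathrm{Poisson}\bigl(nf(x)\lambda(B_{x,R_{n}})\bigr)$ and, given that number, the points are i.i.d. uniform on $B_{x,R_{n}}$. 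Hence the total variation distance is at most
\[
\mathrm{TV}\bigl(\mathrm{Bin}(n,q_{n}),\mathrm{Poisson}(nq_{n})\bigr)+\bigl|\,nq_{n}-nf(x)\lambda(B_{x,R_{n}})\,\bigr|+\mathbb{E}\bigl[N_{P_{nf(x)}}(B_{x,R_{n}})\bigr]\cdot\mathrm{TV}(\nu_{n}^{X},\nu_{n}^{P}),
\]
where $\nu_{n}^{X},\nu_{n}^{P}$ are the two single-point laws. The first term is $O(nq_{n}^{2})=O(n^{-1})$ since $q_{n}=\Theta(n^{-1})$; the second equals $n\lambda(B_{x,R_{n}})\,\bigl|\mu_{f}(B_{x,R_{n}})/\lambda(B_{x,R_{n}})-f(x)\bigr|$, which is $\Theta(1)\cdot o(1)$ because $x$ is a Lebesgue point; and in the third, $\mathbb{E}[N_{P_{nf(x)}}(B_{x,R_{n}})]=\Theta(1)$ while $2\,\mathrm{TV}(\nu_{n}^{X},\nu_{n}^{P})=\int_{B_{x,R_{n}}}\bigl|\,f(u)/q_{n}-1/\lambda(B_{x,R_{n}})\,\bigr|\,du\to0$ (add and subtract $f(x)$ inside the integral, then use the Lebesgue-point property together with $\mu_{f}(B_{x,R_{n}})/\lambda(B_{x,R_{n}})\to f(x)>0$). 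This step is precisely where the full strength of the Lebesgue-point hypothesis enters: control of $\int_{B_{x,r}}|f(u)-f(x)|\,du$, not merely of $\mu_{f}(B_{x,r})$, is what allows one to couple the \emph{positions} of the points and not only their number.

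It remains to assemble the pieces. Since $\widetilde{A}_{n}(x)=\Phi(\mathcal{X}_{n})$ and $\widetilde{P}_{n}(x)=\Phi(\mathcal{P}_{n})$ with the same map $\Phi$, the total-variation bound passes to pushforwards, giving $\mathrm{TV}\bigl(\mathrm{Law}(G(\widetilde{A}_{n}(x))),\mathrm{Law}(G(\widetilde{P}_{n}(x)))\bigr)\to0$ and, applied to the indicator of $\{\mathrm{diam}(\Phi(\cdot))>Mn^{-1/d}\}$, $\mathbb{P}((F_{n}^{A})^{c})\le\mathbb{P}((F_{n}^{P})^{c})+o(1)$. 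Splitting $\mathbb{P}(G(A_{n}(x))\le z)$ over $F_{n}^{A}$ and using $A_{n}(x)=\widetilde{A}_{n}(x)$ there, and symmetrically on the Poisson side, we obtain for every $z\in\mathbb{R}$
\[
\bigl|\,\mathbb{P}(G(A_{n}(x))\le z)-\mathbb{P}(G(P_{n}(x))\le z)\,\bigr|\le\mathbb{P}((F_{n}^{A})^{c})+\mathbb{P}((F_{n}^{P})^{c})+\mathrm{TV}\bigl(\mathrm{Law}(G(\widetilde{A}_{n}(x))),\mathrm{Law}(G(\widetilde{P}_{n}(x)))\bigr).
\]
Finally, $\mathbb{P}((F_{n}^{P})^{c})$ is controlled uniformly in $n$: by the scaling invariance of the homogeneous Poisson process it does not depend on $n$ and equals the probability $\rho(M)$ that the Voronoi cell of the origin generated by the origin together with the points of a rate-$f(x)$ Poisson process in $B_{0,2M}$ has diameter exceeding $M$; since the region forbidden to Poisson points by such a faraway cell point has volume of order $M^{d}$, a routine estimate (after covering the sphere of directions by finitely many caps) gives $\rho(M)\le C_{d}\,e^{-c_{d}f(x)M^{d}}$, so $\rho(M)\to0$ as $M\to\infty$ — alternatively this is immediate from the classical almost sure boundedness of Poisson--Voronoi cells by monotone convergence. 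Therefore $\limsup_{n\to\infty}\sup_{z\in\mathbb{R}}\bigl|\,\mathbb{P}(G(A_{n}(x))\le z)-\mathbb{P}(G(P_{n}(x))\le z)\,\bigr|\le 2\rho(M)$ for every $M>0$, and letting $M\to\infty$ proves the theorem.

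\textbf{Main obstacle.} The routine parts are the measurability bookkeeping — $G$ applied to the localised cell is covered by the hypothesis on $G$, that cell being simply the cell of $x$ in a smaller configuration — and the elementary splitting of probabilities over $F_{n}^{A}$, $F_{n}^{P}$. The two substantive ingredients are the total-variation estimate of the second step, where the Lebesgue-point property must be used to couple point positions and not only counts, and the uniform-in-$n$ diameter tail $\rho(M)$ for the localised Poisson cell. I expect the delicate point to be their interplay and the order of the limits: the window radius $R_{n}=2Mn^{-1/d}$ must be small enough (for $M$ fixed, as $n\to\infty$) for the total-variation estimate to vanish, yet the multiplier $M$ must then be sent to infinity for the localised cell to genuinely coincide with $A_{n}(x)$; arranging the quantifiers — $M$ fixed, then $n\to\infty$, then $M\to\infty$ — in this order is what makes the argument close.
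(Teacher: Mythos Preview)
Your proof is correct and follows essentially the same strategy as the paper: localise to a ball of radius $\Theta(n^{-1/d})$, control the diameter so that the cell coincides with its localised version, and show that the two local point configurations are close --- the paper's Fact~1 and Fact~2 are precisely your Binomial-to-Poisson and density-to-uniform pieces of the total-variation estimate. The only cosmetic difference is packaging: you bound the total-variation distance between the restricted point processes in one stroke and handle the diameter via the \emph{localised} cell together with Poisson scaling, whereas the paper decomposes explicitly over the number of points in the ball and invokes its Appendix Lemma~5.1 for the full-cell diameter; the substance is identical.
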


\noindent This result provides the link between the Voronoi diagram
generated by the point process with an arbitrary density $f$ and
that generated by a homogeneous Poisson point process, which will
lead to convergence in distribution for a large class of functions
of the Voronoi cell $A_{n}\left(x\right)$. In many cases, we are
also interested in showing that the moments of $G\left(A_{n}\left(x\right)\right)$
are asymptotically close to the moments of $G\left(P_{n}\left(x\right)\right)$.
In order to apply the above result to establishing such properties,
we will require additional controls on $G\left(A_{n}\left(x\right)\right)$
and $G\left(P_{n}\left(x\right)\right)$. For example, in Proposition
\ref{prop:result on bddness of second moment of number of edges}
we examine the case where $d=2$ and $G(\cdot)$ denotes the number
of edges of its inputs, and establish a relationship between $\mathbb{E}\left[G\left(A_{n}\left(x\right)\right)\right]$
and $\mathbb{E}\left[G\left(P_{n}\left(x\right)\right)\right]$ by
controlling the second moment of $G\left(A_{n}\left(x\right)\right)$.
We expect that similar arguments to the ones applied there can be
used to derive the convergence of other moments of interest, as well
as to study other geometric parameters.\\

\noindent In Section 3, we extend the work of \cite{Devroye2017}
to the case where $x$ is not included in the generating point process.
In particular, we focus our study on the cell, $L_{n}\left(x\right)$,
that contains the fixed point $x\in\mathbb{R}^{d}$ in the Voronoi
diagram generated by $\left\{ X_{1},\cdots,X_{n}\right\} $. By applying
similar methods to those in \cite{Devroye2017} to this new case we
are able to obtain both a control on the diameter of $L_{n}\left(x\right)$,
denoted by $D_{n}^{L}\left(x\right)$, and a complete characterization
of the limiting distribution of $n\mu_{f}\left(L_{n}\left(x\right)\right)$
in terms of its limiting moments. More specifically, we have the following
results. 
\begin{thm}
\noindent \label{thm:diameter control on L_n(x), first result in section 3}
Let $f$ be a probability density function on $\mathbb{R}^{d}$ and
$x$ be a Lebesgue point of $f$ such that $f(x)>0$. Then, there
exist universal constants $c_{1},c_{2}>0$ such that $\forall t>0$,
\[
\limsup\limits _{n\rightarrow\infty}\mathbb{P}\left(D_{n}^{L}\left(x\right)\geq\frac{t}{n^{\frac{1}{d}}}\right)\leq c_{1}e^{-c_{2}f\left(x\right)t^{d}}.
\]
\end{thm}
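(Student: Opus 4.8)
The plan is to adapt to $L_{n}(x)$ the cone‑covering argument that governs the diameter of the cell \emph{with} nucleus $x$ (the decay proved for $D_{n}^{A}(x)$ in \cite{Devroye2017}); the new feature is that the nucleus of $L_{n}(x)$ is not $x$ but the \emph{random} nearest neighbour of $x$ in $\left\{ X_{1},\dots,X_{n}\right\}$, which I denote by $\nu$ (so $L_{n}(x)=A_{n}(\nu)$, and ties occur with probability $0$). Fix once and for all a half‑angle $\theta_{0}\in(0,\pi/2)$ and constants $\kappa\in(0,\cos\theta_{0})$ and $\eta\in(0,1)$, write $s:=tn^{-1/d}$ and $\rho:=\kappa s$, and choose $c_{3}:=\sin\theta_{0}/(1+\sin\theta_{0})$.

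\emph{Step 1: a large empty cone at $\nu$.} Since $A_{n}(\nu)$ is a closed convex set containing $\nu$, the event $\{D_{n}^{L}(x)\ge s\}$ forces the existence of a point $q\in A_{n}(\nu)$ with $\|q-\nu\|\ge s/2$. From $q\in A_{n}(\nu)$ we have $\|q-\nu\|\le\|q-X_{j}\|$ for all $j$, and a short law‑of‑cosines computation shows that if some $X_{j}$ lay in the truncated cone with apex $\nu$, axis $(q-\nu)/\|q-\nu\|$, half‑angle $\theta_{0}$ and radius $\rho$ (using $\rho<\cos\theta_{0}\cdot s\le 2\|q-\nu\|\cos\theta_{0}$), then $\|q-X_{j}\|<\|q-\nu\|$, a contradiction. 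Hence on $\{D_{n}^{L}(x)\ge s\}$ there is a unit vector $u$ for which the truncated cone with apex $\nu$, axis $u$, half‑angle $\theta_{0}$ and radius $\rho$ contains no sample point; and any such cone contains a ball $B_{p,c_{3}\rho}$ with $\|p-\nu\|\le\rho$ (take $p$ on the axis at distance $\rho/(1+\sin\theta_{0})$ from $\nu$). So $\{D_{n}^{L}(x)\ge s\}$ implies that some ball of radius $c_{3}\rho$ whose centre lies within distance $\rho$ of $\nu$ is empty of sample points.

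\emph{Step 2: discretising the random nucleus.} To remove the dependence on the random centre I split on $\|\nu-x\|$. On $\{\|\nu-x\|\ge\eta s\}$ the ball $B_{x,\eta s}$ is empty of sample points. On $\{\|\nu-x\|<\eta s\}$, Step 1 yields an empty ball $B_{p,c_{3}\rho}$ with $p\in B_{x,(\kappa+\eta)s}$; covering $B_{x,(\kappa+\eta)s}$ by a deterministic grid $z_{1},\dots,z_{K}$ of mesh $c_{3}\rho/2$ — which needs only $K=K(d)$ points, as all radii are fixed multiples of $s$ — I obtain an $l$ with $B_{z_{l},c_{3}\rho/2}\subseteq B_{p,c_{3}\rho}$ empty. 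Therefore
\[
\mathbb{P}\left(D_{n}^{L}(x)\ge s\right)\le\mathbb{P}\left(N_{\{X_{1},\dots,X_{n}\}}\!\left(B_{x,\eta s}\right)=0\right)+\sum_{l=1}^{K}\mathbb{P}\left(N_{\{X_{1},\dots,X_{n}\}}\!\left(B_{z_{l},c_{3}\rho/2}\right)=0\right).
\]

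\emph{Step 3: evaluating the probabilities via the Lebesgue point.} Each term on the right is of the form $\left(1-\mu_{f}(B)\right)^{n}$ with $B$ a ball contained in $B_{x,Cs}$ for some $C=C(d)$ and with $\lambda(B)=\beta_{B}s^{d}$, $\beta_{B}=\beta_{B}(d)>0$. Since $x$ is a Lebesgue point of $f$ and $\lambda(B)$ is a fixed positive fraction of $\lambda(B_{x,Cs})$, the Lebesgue‑point condition (stated for balls centred at $x$) transfers to give $\left|\mu_{f}(B)-f(x)\lambda(B)\right|\le\lambda(B_{x,Cs})\,\varepsilon(Cs)$ with $\varepsilon(r)\to0$ as $r\to0$; as $s=tn^{-1/d}\to0$ this yields $n\mu_{f}(B)\to f(x)\beta_{B}t^{d}$ and hence $\left(1-\mu_{f}(B)\right)^{n}\to\exp\!\left(-f(x)\beta_{B}t^{d}\right)$. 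Summing the finitely many terms and passing to the $\limsup$ gives
\[
\limsup_{n\to\infty}\mathbb{P}\left(D_{n}^{L}(x)\ge\frac{t}{n^{1/d}}\right)\le(1+K)\exp\!\left(-c_{2}f(x)t^{d}\right),
\]
where $c_{2}$ is the minimum of the finitely many constants $\beta_{B}$ occurring; taking $c_{1}:=1+K$ finishes the proof, and both $c_{1}$ and $c_{2}$ depend only on $d$. The crux is Steps 1--2: in \cite{Devroye2017} the cell is anchored at the deterministic point $x$, so the cone argument directly produces emptiness events for fixed cones, whereas here the cone is attached to the random point $\nu$. The remedy of splitting on $\|\nu-x\|$ and then discretising the position of $\nu$ at scale $s$ resolves this; the delicate point will be choosing $\theta_{0},\kappa,\eta$ and the grid mesh so that the perturbation of the apex is absorbed while every region produced keeps volume of order $s^{d}$, which is exactly what makes the constants universal.
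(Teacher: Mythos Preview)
Your argument is correct, but it proceeds differently from the paper's. The paper anchors a fixed family of cones of angle $\pi/12$ at the \emph{deterministic} point $x$, not at the random nucleus: it proves a dedicated geometric lemma (with carefully tuned radii $R_{1}<R_{2}<R_{3}$, all proportional to $s=t n^{-1/d}$) showing that if every annular cone section $\{R_{2}\le\|z-x\|<R_{3}\}\cap C_{i}$ contains a sample point \emph{and} the nucleus $y$ of $L_{n}(x)$ satisfies $\|y-x\|<R_{1}$, then $L_{n}(x)\subseteq B_{x,s/2}$. The bad event is thus contained in $\{\|y-x\|\ge R_{1}\}\cup\bigcup_{i}\{C_{i}^{3,n}\text{ empty}\}$, a union of $\gamma_{d}+1$ deterministic emptiness events, and the generalized Lebesgue differentiation theorem finishes as in your Step~3.

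Your route instead runs the standard cone argument at the \emph{random} apex $\nu$ to produce an empty ball near $\nu$, and then absorbs the randomness of $\nu$ by splitting on $\|\nu-x\|$ and discretising on a grid of mesh comparable to $s$. This trades the paper's bespoke geometric lemma for a more elementary cone computation plus a covering step; the price is a larger (but still dimension-dependent) constant $c_{1}=1+K(d)$ coming from the grid, while the paper gets $c_{1}=\gamma_{d}+1$ directly. Both approaches rely on the same off-centre Lebesgue differentiation estimate in the final step, and both yield constants depending only on $d$.
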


Based on this diameter control, we are able to compute the limit of
all the moments of $n\mu_{f}\left(L_{n}\left(x\right)\right)$ and
hence determine the limiting distribution of $n\mu_{f}\left(L_{n}\left(x\right)\right)$.
\begin{thm}
\label{thm:limiting distribution of n*L_n(x), second result in section 3}For
every positive integer $k$, let $W$ be a Bernoulli$\left(\frac{k}{k+1}\right)$
random variable and $U_{1},\cdots,U_{k}$ be i.i.d. random variables
with the uniform distribution on $B_{0,1}$ that are independent of
$W$. Set $\bar{1}:=\left(1,0,\cdots,0\right)\in\mathbb{R}^{d}$,
and define the random variable 
\[
\begin{split}D_{k}:= & \frac{\lambda\left(B_{U_{1},||\bar{1}-U_{1}||}\cup\dots\cup B_{U_{k},||\bar{1}-U_{k}||}\cup B_{0,1}\right)}{\lambda\left(B_{0,1}\right)}\mathbb{I}_{\left\{ W=0\right\} }\\
 & +\frac{\lambda\left(B_{\bar{1},||\bar{1}-U_{1}||}\cup B_{U_{2},||U_{1}-U_{2}||}\cup B_{U_{3},||U_{1}-U_{3}||}\cup\dots\cup B_{U_{k},||U_{1}-U_{k}||}\cup B_{0,||U_{1}||}\right)}{\lambda\left(B_{0,1}\right)}\mathbb{I}_{\left\{ W=1\right\} }.
\end{split}
\]
Let $f$ be any probability density function on $\mathbb{R}^{d}$
and $x$ be a Lebesgue point of $f$ such that $f\left(x\right)>0$.
Then, 
\[
\lim\limits _{n\rightarrow\infty}\mathbb{E}\left[n^{k}\mu_{f}\left(L_{n}\left(x\right)\right)^{k}\right]=\mathbb{E}\left[\frac{(k+1)!}{D_{k}^{k+1}}\right],\ \forall k\geq1.
\]
Moreover, these limits of the moments uniquely determine a distribution
$\mathscr{D}$ on $\mathbb{R}^{+}$ with the property that $\mathscr{D}$
does not depend on the choice of $f$ or $x$, and the distribution
of $n\mu_{f}\left(L_{n}\left(x\right)\right)$ weakly converges to
$\mathscr{D}$ as $n\rightarrow\infty$. 
\end{thm}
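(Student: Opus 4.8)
The plan is to adapt to the cell $L_{n}(x)$ the moment computation carried out in \cite{Devroye2017} for the cell with a fixed nucleus. Write $\omega_{d}:=\lambda\left(B_{0,1}\right)$. The first step is an exact integral formula for the moments. Expanding $\mu_{f}\left(L_{n}(x)\right)^{k}=\int_{(\mathbb{R}^{d})^{k}}\mathbb{I}_{\{z_{1},\dots,z_{k}\in L_{n}(x)\}}\prod_{l=1}^{k}f\left(z_{l}\right)dz_{l}$ and taking expectations, and using that $x$ almost surely lies in the interior of a single cell — so that $z_{1},\dots,z_{k}\in L_{n}(x)$ iff there is an index $j$ for which $X_{j}$ is the nearest of $X_{1},\dots,X_{n}$ to each of $x,z_{1},\dots,z_{k}$ — exchangeability (which contributes a factor $n$ and lets one take $j=1$) followed by conditioning on $X_{1}=y$ gives
\[
\mathbb{E}\left[\left(n\mu_{f}\left(L_{n}(x)\right)\right)^{k}\right]=n^{k+1}\int_{\mathbb{R}^{d}}\int_{(\mathbb{R}^{d})^{k}}f(y)\left(\prod_{l=1}^{k}f\left(z_{l}\right)\right)\left(1-\mu_{f}(\Omega)\right)^{n-1}dz_{1}\cdots dz_{k}\,dy,
\]
where $\Omega=\Omega\left(y;x,z_{1},\dots,z_{k}\right):=B_{x,\|y-x\|}\cup B_{z_{1},\|y-z_{1}\|}\cup\dots\cup B_{z_{k},\|y-z_{k}\|}$ is a union of $k+1$ balls, all of which pass through $y$.

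The second step rescales around $x$. Substituting $y=x+r_{n}a$ and $z_{l}=x+r_{n}b_{l}$ with $r_{n}:=\left(n\omega_{d}f(x)\right)^{-1/d}$, one checks that $n^{k+1}r_{n}^{d(k+1)}=\left(\omega_{d}f(x)\right)^{-(k+1)}$, that the density factors tend to $f(x)^{k+1}$, and that, since $x$ is a Lebesgue point, for fixed $a,b_{1},\dots,b_{k}$ one has $\mu_{f}(\Omega)=\lambda(\widetilde{\Omega})/(n\omega_{d})+o(1/n)$, where $\widetilde{\Omega}:=B_{0,\|a\|}\cup B_{b_{1},\|a-b_{1}\|}\cup\dots\cup B_{b_{k},\|a-b_{k}\|}$ is the recentred, rescaled union; hence $\left(1-\mu_{f}(\Omega)\right)^{n-1}\to\exp\left(-\lambda(\widetilde{\Omega})/\omega_{d}\right)$ pointwise, and, once the exchange of limit and integral is justified,
\[
\lim_{n\to\infty}\mathbb{E}\left[\left(n\mu_{f}\left(L_{n}(x)\right)\right)^{k}\right]=\omega_{d}^{-(k+1)}\int_{(\mathbb{R}^{d})^{k+1}}\exp\left(-\frac{1}{\omega_{d}}\lambda\left(B_{0,\|a\|}\cup B_{b_{1},\|a-b_{1}\|}\cup\dots\cup B_{b_{k},\|a-b_{k}\|}\right)\right)da\,db_{1}\cdots db_{k}.
\]
Justifying this exchange is the main obstacle: the Lebesgue-point hypothesis gives only \emph{averaged} control of $f$ near $x$ and of $\mu_{f}$ of small balls, so both the pointwise convergence just used and, more importantly, the construction of an $n$-uniform integrable dominating function on the non-compact domain $(\mathbb{R}^{d})^{k+1}$ require care. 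This is exactly where Theorem \ref{thm:diameter control on L_n(x), first result in section 3} enters: it confines $L_{n}(x)$ to $B_{x,tn^{-1/d}}$ off an event of probability at most $c_{1}e^{-c_{2}f(x)t^{d}}$, which supplies the exponential-in-$\|a\|^{d}$ tail bound needed to dominate the rescaled integrand and to neglect the contribution of atypically large cells.

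The third step evaluates the limiting integral. I would split its domain according to which of the $k+1$ points $a$ (the rescaled nucleus) and $b_{1},\dots,b_{k}$ (the rescaled $z_{l}$) is farthest from the origin (the rescaled $x$). On the region where $a$ is farthest, a polar change of variables about the origin — rescaling $\|a\|$ to $1$ and rotating $a$ onto $\bar{1}$ — turns $\widetilde{\Omega}$ into $B_{0,1}$ together with balls $B_{U_{l},\|\bar{1}-U_{l}\|}$ with the $U_{l}$ confined to $B_{0,1}$, i.e. exactly the $W=0$ configuration in the definition of $D_{k}$. On the region where some $b_{j}$ is farthest, the symmetry among $b_{1},\dots,b_{k}$ contributes a factor $k$, and the analogous normalization — now sending $b_{j}$ onto $\bar{1}$, so that the rescaled nucleus becomes a free variable $U_{1}\in B_{0,1}$ — produces the $W=1$ configuration. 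In each region the radial integration contributes a factor $k!$ (from $\int_{0}^{\infty}\rho^{dk+d-1}e^{-c\rho^{d}}d\rho$), the angular integration contributes the surface area $d\,\omega_{d}$ of $\partial B_{0,1}$, and the leftover powers of $\omega_{d}$ cancel; collecting the two pieces gives $k!\,\mathbb{E}\left[D_{k}^{-(k+1)}\mid W=0\right]+k\cdot k!\,\mathbb{E}\left[D_{k}^{-(k+1)}\mid W=1\right]$, which equals $\mathbb{E}\left[(k+1)!/D_{k}^{\,k+1}\right]$ since $(k+1)!\,\mathbb{P}(W=0)=k!$ and $(k+1)!\,\mathbb{P}(W=1)=k\cdot k!$.

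For the distributional statement, observe that in both cases $D_{k}$ is $\omega_{d}^{-1}$ times the volume of a union of balls that contains a ball of radius at least $\tfrac{1}{2}$ (when $W=1$ this uses $\|U_{1}\|+\|\bar{1}-U_{1}\|\geq1$), so $D_{k}\geq2^{-d}$ and hence $m_{k}:=\mathbb{E}\left[(k+1)!/D_{k}^{\,k+1}\right]\leq(k+1)!\,2^{d(k+1)}$. This growth is slow enough for Carleman's criterion $\sum_{k}m_{k}^{-1/(2k)}=\infty$ to hold, so $\{m_{k}\}$ is the moment sequence of a unique probability distribution $\mathscr{D}$ on $\mathbb{R}^{+}$, which by construction does not depend on $f$ or $x$. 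Since $\mathbb{E}\left[\left(n\mu_{f}(L_{n}(x))\right)^{k}\right]\to m_{k}$ for every $k\geq1$, the method of moments — uniform integrability coming from the bounded $(k+1)$-st moments makes every subsequential weak limit of $n\mu_{f}(L_{n}(x))$ have moments $\{m_{k}\}$, and determinacy forces all such limits to coincide — yields $n\mu_{f}(L_{n}(x))\Rightarrow\mathscr{D}$ as $n\to\infty$.
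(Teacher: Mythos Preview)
Your overall architecture is sound, and your Step 3 (splitting by the farthest point and normalizing to produce the two $D_{k}$-configurations) together with the Carleman argument match the paper. The difference is in how the limit is obtained, and there is a real gap in your Step 2.

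With only the Lebesgue-point hypothesis on $f$ at $x$, the rescaled densities $f(x+r_{n}a)$ and $f(x+r_{n}b_{l})$ admit no pointwise control: they need not converge to $f(x)$ along rays, and they need not be bounded. Hence the integrand after your change of variables has no evident $n$-uniform dominating function on $(\mathbb{R}^{d})^{k+1}$, and pointwise convergence of the integrand is itself unclear. Theorem \ref{thm:diameter control on L_n(x), first result in section 3} bounds the \emph{cell}, not the density factor: it lets you confine the $b_{l}$'s with high probability, but the residual contribution from the complementary event is at most $n^{k}\cdot c_{1}e^{-c_{2}f(x)t^{d}}$, which diverges for fixed $t$, and letting $t$ grow with $n$ still leaves a $(\log n)^{k}$ envelope rather than a uniform one. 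So the exchange of limit and integral is not the routine DCT step your sketch suggests.

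The paper avoids this obstacle by a genuinely different mechanism. It keeps your Step 1 formula in the probabilistic form
\[
\mathbb{E}\bigl[(n\mu_{f}(L_{n}(x)))^{k}\bigr]=n^{k+1}\,\mathbb{E}\bigl[(1-\mu_{f}(\Omega))^{n-1}\bigr],
\]
but then, instead of rescaling and passing to the limit inside the integral, it reduces the problem to the tail estimate
\[
\lim_{z\to0}z^{-(k+1)}\,\mathbb{P}\bigl(\mu_{f}(\Omega)\le z\bigr)=\mathbb{E}\bigl[D_{k}^{-(k+1)}\bigr],
\]
via the identity $\mathbb{E}[(1-\mu_{f}(\Omega))^{n-1}]=(n-1)\int_{0}^{1}(1-z)^{n-2}\mathbb{P}(\mu_{f}(\Omega)\le z)\,dz$. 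The tail probability is then analyzed using three ingredients that replace your DCT: (i) the exact fact $\mu_{f}(B_{x,\|X_{i}-x\|})\sim\mathrm{Unif}[0,1]$, which supplies the $z^{k+1}$ scaling without any regularity of $f$; (ii) the generalized Lebesgue differentiation theorem applied to ``fat'' sets $B$ with $\lambda(B^{*})/\lambda(B)$ bounded, which converts the ratio $\mu_{f}(\Omega)/\mu_{f}(B_{x,\|Y-x\|})$ into the corresponding $\lambda$-ratio up to a $(1\pm\epsilon)$ factor; and (iii) a maximal coupling of the inner $f$-distributed points with uniforms on the outermost ball, whose failure probability is controlled by Lemma \ref{lem:Appendix Lebesgue density theorem}. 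Items (ii)--(iii) are exactly what substitute for the unavailable pointwise control of $f$, and they are what your sketch is missing.
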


In general, $\mu_{f}\left(L_{n}\left(x\right)\right)$ cannot be computed
without \emph{a priori} knowledge of $f$. Thus, we conclude our study
in Section 3 by investigating the information provided by the Lebesgue
measure of $L_{n}\left(x\right)$. To this end, we prove the following
result. 
\begin{thm}
\label{thm: convergence in distirbution for leb of L_n(x), 3rd result in Section 3}
Let $f$ be any probability density function on $\mathbb{R}^{d}$,
$x$ be a Lebesgue point of $f$ such that $f\left(x\right)>0$, and
$Z$ be a random variable with the distribution $\mathscr{D}$ defined
in Theorem \ref{thm:limiting distribution of n*L_n(x), second result in section 3}.
Then, 
\[
nf\left(x\right)\lambda\left(L_{n}\left(x\right)\right)\rightarrow Z\text{ in distribution}.
\]
\end{thm}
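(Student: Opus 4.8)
The strategy is to deduce the result from Theorem~\ref{thm:limiting distribution of n*L_n(x), second result in section 3} by a converging-together argument. The plan is to show that
\[
R_n := n\mu_f(L_n(x)) - nf(x)\lambda(L_n(x))
\]
tends to $0$ in probability; since $n\mu_f(L_n(x))\to Z$ in distribution by that theorem and $nf(x)\lambda(L_n(x)) = n\mu_f(L_n(x)) - R_n$, Slutsky's lemma (the converging-together lemma) then yields $nf(x)\lambda(L_n(x))\to Z$ in distribution. The heuristic behind $R_n\to 0$ is that $L_n(x)$ collapses onto $x$ at the scale $n^{-1/d}$ by Theorem~\ref{thm:diameter control on L_n(x), first result in section 3}, and near the Lebesgue point $x$ the measure $\mu_f$ is well approximated by $f(x)$ times Lebesgue measure, so the two normalizations of the cell size coincide in the limit.

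To make this precise, fix $t>0$ and set $E_n^t := \{D_n^L(x) < t n^{-1/d}\}$. Since $x\in L_n(x)$ and $D_n^L(x)$ is the diameter of $L_n(x)$, on $E_n^t$ every point of $L_n(x)$ lies within distance $t n^{-1/d}$ of $x$, i.e. $L_n(x)\subseteq B_{x,t n^{-1/d}}$. Using $\mu_f(L_n(x)) = \int_{L_n(x)} f(u)\,du$, $f(x)\lambda(L_n(x)) = \int_{L_n(x)} f(x)\,du$, and $\lambda(B_{x,r}) = r^d\lambda(B_{0,1})$, we get on $E_n^t$ that
\[
|R_n| \le n\int_{B_{x,t n^{-1/d}}} |f(u) - f(x)|\,du = t^d\,\lambda(B_{0,1})\,\eta_n(t), \qquad \eta_n(t) := \frac{1}{\lambda(B_{x,t n^{-1/d}})}\int_{B_{x,t n^{-1/d}}} |f(u)-f(x)|\,du .
\]
Because $t n^{-1/d}\to 0$ and $x$ is a Lebesgue point of $f$, $\eta_n(t)\to 0$ as $n\to\infty$ for each fixed $t$, so the right-hand side tends to $0$.

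Now, for $\delta>0$ I would split
\[
\mathbb{P}(|R_n|>\delta) \le \mathbb{P}\bigl((E_n^t)^c\bigr) + \mathbb{P}(|R_n|>\delta,\ E_n^t).
\]
By the previous paragraph the second term vanishes once $n$ is large enough (depending on $t$ and $\delta$), while by Theorem~\ref{thm:diameter control on L_n(x), first result in section 3}, $\limsup_{n\to\infty}\mathbb{P}((E_n^t)^c)\le c_1 e^{-c_2 f(x) t^d}$. Hence $\limsup_{n\to\infty}\mathbb{P}(|R_n|>\delta)\le c_1 e^{-c_2 f(x) t^d}$, and letting $t\to\infty$ (using $f(x)>0$) shows $R_n\to 0$ in probability, which finishes the argument.

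I do not anticipate a serious obstacle; the only delicate point is the order of the two limiting operations, namely coupling the Lebesgue-point estimate — which controls $\int |f(u)-f(x)|\,du$ only on balls of small radius — with the stochastic diameter bound, and this is exactly what the exponential tail in Theorem~\ref{thm:diameter control on L_n(x), first result in section 3} is designed to accommodate. One should also note, as is already addressed in Section~3, that $L_n(x)$ is well defined outside a $\mathbb{P}$-null event, which does not affect the distributional conclusion.
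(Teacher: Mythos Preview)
Your proof is correct, and it takes a genuinely different and more economical route than the paper's. The paper also invokes Slutsky, but it does so by proving the \emph{ratio} convergence $\lambda(L_n(x))/\mu_f(L_n(x))\to 1/f(x)$ in probability. Because the generalized Lebesgue differentiation theorem for non-ball sets requires an eccentricity bound $\lambda(B^*)/\lambda(B)\le c$, the paper must obtain a probabilistic \emph{lower} bound on $\lambda(L_n(x))$; it does this by showing $B_{x,\,d_n^2(x)-d_n^1(x)}\subseteq L_n(x)$ and then carrying out a fairly delicate analysis of the gap $d_n^2(x)-d_n^1(x)$ between the first and second nearest-neighbour distances. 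Your argument sidesteps all of this: by bounding the \emph{difference} $R_n=n\mu_f(L_n(x))-nf(x)\lambda(L_n(x))$ rather than the ratio, you only need $L_n(x)\subseteq B_{x,tn^{-1/d}}$ and the ordinary Lebesgue-point condition on balls centered at $x$, so the diameter estimate of Theorem~\ref{thm:diameter control on L_n(x), first result in section 3} alone suffices. The trade-off is that the paper's ratio statement is marginally stronger as an intermediate result, but for the theorem as stated your approach is cleaner and shorter.
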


Section 4 is dedicated to showing that for all $n$ sufficiently large,
disjoint regions of the Voronoi diagram behave ``almost'' independently
from one another. In particular, combined with the results from the
previous sections, this gives us a method for studying $f$ in multiple
disjoint regions of $\mathbb{R}^{d}$ simultaneously without requiring
multiple data sets.
\begin{thm}
\noindent \label{thm:asymptotic independence result of Section 4}Let
$k\geq2$ be an integer and $Z_{1},\cdots,Z_{k}$ be i.i.d. random
variables with the distribution $\mathscr{D}$ defined in Theorem
\ref{thm:limiting distribution of n*L_n(x), second result in section 3}.
Assume that $f$ is a probability density function on $\mathbb{R}^{d}$
and $x_{1},\cdots,x_{k}$ are $k$ distinct Lebesgue points of $f$
such that $f\left(x_{1}\right),\cdots,f\left(x_{k}\right)$ are all
positive. Then, 
\[
\left(n\mu_{f}\left(L_{n}\left(x_{1}\right)\right),\cdots,n\mu_{f}\left(L_{n}\left(x_{k}\right)\right)\right)\rightarrow\left(Z_{1},\cdots,Z_{k}\right)\text{ in distribution}.
\]
\end{thm}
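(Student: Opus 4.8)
The plan is to prove the statement by the method of moments. Write $m_a:=\mathbb{E}\left[Z^a\right]=\mathbb{E}\left[(a+1)!/D_a^{a+1}\right]$ for the $a$-th moment of $\mathscr{D}$; by Theorem~\ref{thm:limiting distribution of n*L_n(x), second result in section 3} we have $\lim_{n\to\infty}\mathbb{E}\left[\left(n\mu_f(L_n(x_i))\right)^a\right]=m_a$ for every $i$ and every $a\geq1$, so in particular $\sup_n\mathbb{E}\left[\left(n\mu_f(L_n(x_i))\right)^{a}\right]<\infty$ for all $a$, which makes the family $\left\{\left(n\mu_f(L_n(x_1)),\dots,n\mu_f(L_n(x_k))\right)\right\}_n$ tight and uniformly integrable against any polynomial. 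Moreover $D_a$ is bounded below by a universal positive constant: on $\{W=0\}$ the defining union contains $B_{0,1}$, and on $\{W=1\}$ it contains one of $B_{\bar1,\|\bar1-U_1\|}$ or $B_{0,\|U_1\|}$, at least one of which has radius $\geq1/2$. Hence $m_a\leq C^{a+1}(a+1)!$ for a universal $C$, so $Z$, and therefore by H\"older's inequality also $Z_1+\cdots+Z_k$, has a finite exponential moment; consequently the law of $(Z_1,\dots,Z_k)$ on $\mathbb{R}_{\geq0}^k$ is determined by its moments, namely the numbers $\prod_i m_{a_i}$. It therefore suffices to prove that for all non-negative integers $a_1,\dots,a_k$,
\[
\lim_{n\to\infty}\mathbb{E}\Big[\textstyle\prod_{i=1}^k\left(n\mu_f(L_n(x_i))\right)^{a_i}\Big]=\prod_{i=1}^k m_{a_i},
\]
since every subsequential weak limit of the vector will then have exactly these moments and hence be distributed as $(Z_1,\dots,Z_k)$.

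The core of the argument is a localisation and factorisation of this joint moment. Set $a=\sum_i a_i$; using $\mu_f(L_n(x_i))=\int\mathbb{I}_{\{p\in L_n(x_i)\}}f(p)\,dp$ and Fubini's theorem,
\[
\mathbb{E}\Big[\textstyle\prod_i\mu_f(L_n(x_i))^{a_i}\Big]=\int_{(\mathbb{R}^d)^{a}}\mathbb{P}\big(p^{(i)}_j\in L_n(x_i)\ \forall i,j\big)\prod_{i,j}f\big(p^{(i)}_j\big)\,dp^{(i)}_j,
\]
where the $a$ test points are grouped into blocks of sizes $a_1,\dots,a_k$ attached to $x_1,\dots,x_k$. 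The event $\{p^{(i)}_j\in L_n(x_i)\ \forall j\}$ is precisely the event that some data point $X_{m_i}$ is the nearest neighbour, among $X_1,\dots,X_n$, simultaneously of $x_i$ and of all block-$i$ test points; conditioning on $X_{m_i}=y_i$, this forces $X_m\notin U_i$ for every other index $m$, where $U_i$ is the union of the ball $B_{x_i,\|y_i-x_i\|}$ with the balls $B_{p^{(i)}_j,\|y_i-p^{(i)}_j\|}$, $j=1,\dots,a_i$. By Theorem~\ref{thm:diameter control on L_n(x), first result in section 3}, which also controls the nearest-neighbour distance of $x_i$ since $D_n^L(x_i)\geq\|x_i-X_{m_i}\|$, up to an error that vanishes as $n\to\infty$ and then a truncation radius $t\to\infty$, we may restrict the integral to the configurations in which the $m_i$ are distinct and each $U_i$ is contained in $B_{x_i,t n^{-1/d}}$; for $n$ large these balls are pairwise disjoint because the $x_i$ are distinct, so $\mu_f\big(\bigcup_i U_i\big)=\sum_i\mu_f(U_i)$ and no $y_{i'}$ with $i'\neq i$ can lie in $U_i$. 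On such configurations,
\[
\mathbb{P}\big(p^{(i)}_j\in L_n(x_i)\ \forall i,j\big)\approx\frac{n!}{(n-k)!}\int_{(\mathbb{R}^d)^k}\Big(1-\sum_{i=1}^k\mu_f(U_i)\Big)^{n-k}\prod_{i=1}^k f(y_i)\,dy_i,
\]
and since each $\mu_f(U_i)=O(n^{-1})$ we have $\big(1-\sum_i\mu_f(U_i)\big)^{n-k}=\prod_i\big(1-\mu_f(U_i)\big)^{n}\,(1+o(1))$ and $n!/(n-k)!=n^k(1+o(1))$. Substituting back, both the inner integral over $(y_1,\dots,y_k)$ and the outer integral over the test points split over the $k$ blocks, yielding $\mathbb{E}\big[\prod_i\mu_f(L_n(x_i))^{a_i}\big]=\prod_i\mathbb{E}\big[\mu_f(L_n(x_i))^{a_i}\big]+o(n^{-a})$, where each factor is exactly the single-point integral analysed in the proof of Theorem~\ref{thm:limiting distribution of n*L_n(x), second result in section 3}. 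Multiplying through by $n^{a}=\prod_i n^{a_i}$ and invoking that theorem blockwise gives the displayed limit.

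The main obstacle is making the ``$\approx$'' rigorous, i.e.\ bounding the contribution of the bad configurations inside these multiple integrals. Three families must be excluded: configurations in which two distinct blocks share a common nearest-neighbour data point; configurations in which some exclusion region $U_i$ protrudes beyond $B_{x_i,tn^{-1/d}}$; and configurations in which some test point $p^{(i)}_j$ lies far from $x_i$ yet still inside $L_n(x_i)$. Each is controlled by the exponential diameter bound of Theorem~\ref{thm:diameter control on L_n(x), first result in section 3}, but because that bound has to be applied underneath the $a$-fold test-point integral it must be combined with the uniform moment bounds $\sup_n\mathbb{E}\big[\left(n\mu_f(L_n(x_i))\right)^{a+1}\big]<\infty$ through a H\"older splitting, exactly as in the $k=1$ analysis of Section~3; one lets $n\to\infty$ first and only afterwards $t\to\infty$. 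A secondary technical point is the dominated-convergence justification for passing to the limit in the rescaled integrals $y_i=x_i+n^{-1/d}\tilde y_i$, $p^{(i)}_j=x_i+n^{-1/d}\tilde p^{(i)}_j$, where the Lebesgue-point hypothesis at each $x_i$ is used to replace $\mu_f$ by $f(x_i)\lambda$ on the relevant shrinking neighbourhoods; this is identical to the corresponding step in the proof of Theorem~\ref{thm:limiting distribution of n*L_n(x), second result in section 3} and can be quoted block by block.
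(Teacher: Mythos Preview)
Your approach via the method of moments is correct in outline and takes a genuinely different route from the paper's. The paper works directly with cumulative distribution functions: it restricts to a high-probability event $E_n$ on which both diameters $D_n^L(x_q)$ are at most $\tfrac{t}{4}n^{-1/d}$ and the counts $N_{\{X_1,\dots,X_n\}}(B_{x_q,tn^{-1/d}})$ are at most $M$, and then decomposes $\mathbb{P}\big(n\mu_f(L_n(x_1))\le z_1,\ n\mu_f(L_n(x_2))\le z_2,\ E_n\big)$ according to the pair of counts $(i,j)$. Conditionally on which $i$ points fall in the first ball and which $j$ in the second (the balls being disjoint for large $n$), the event depends on disjoint blocks of the i.i.d.\ sample and hence factorizes; the only discrepancy with the product $F_{x_1}^n(z_1)F_{x_2}^n(z_2)$ comes from the combinatorial factor $\binom{n}{i}\binom{n-i}{j}$ versus $\binom{n}{i}\binom{n}{j}$ and from $\big(1-\mu_f(B_1\cup B_2)\big)^{n-i-j}$ versus $\big(1-\mu_f(B_1)\big)^{n-i}\big(1-\mu_f(B_2)\big)^{n-j}$, both of which have ratio tending to $1$ uniformly over $i,j\le M$. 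This yields $|F_{x_1,x_2}^n(z_1,z_2)-F_{x_1}^n(z_1)F_{x_2}^n(z_2)|\to0$, and the marginal convergence from Theorem~\ref{thm:limiting distribution of n*L_n(x), second result in section 3} finishes the argument.

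Compared with this, your moment approach re-imports more of the Section~3 machinery into the joint setting and needs the additional step of moment determinacy for the vector (your exponential-moment argument for this is fine). The paper's CDF route is shorter because it never reopens the moment computation: once the cells are localized to disjoint balls, the factorization is immediate at the level of events rather than of integrals, and Theorem~\ref{thm:limiting distribution of n*L_n(x), second result in section 3} is invoked only as a black box for the marginals. Your route, on the other hand, makes the factorization explicit at the level of the exclusion sets $U_i$, and would be the natural one if one wanted the joint limiting \emph{moments} themselves rather than just the weak limit. One small point to tighten in your sketch: after the factorization you obtain a product of \emph{truncated} single-point moments (restricted to $U_i\subseteq B_{x_i,tn^{-1/d}}$), not the full ones, so you need a separate line showing that $n^{a_i}\big(\mathbb{E}[\mu_f(L_n(x_i))^{a_i}]-\text{truncated version}\big)\to0$ as $n\to\infty$ then $t\to\infty$; your H\"older-plus-diameter argument handles exactly this, but it should be applied once more at the marginal level, not only to the joint error terms.
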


\section{Voronoi Cells With Fixed Nucleus}

Assume that $f$, $\mu_{f}$, $x$, $\left\{ X_{1},\cdots,X_{n}\right\} $,
$P_{nf\left(x\right)}$, $A_{n}\left(x\right)$ and $P_{n}\left(x\right)$
are exactly as in the Introduction. We first look to extend the work
of \cite{Devroye2017} to include consideration of other geometric
properties of $A_{n}\left(x\right)$ besides its measure. The way
we achieve this goal is by linking and comparing $A_{n}\left(x\right)$
with $P_{n}\left(x\right)$, for all $n$ sufficiently large. Our
main result of this section is Theorem \ref{thm:convergence in distribution main result in Section 2}
which is stated again below. The theorem shows that for a large class
of functions $G$ on the space of polytopes in $\mathbb{R}^{d}$,
the distributions of $G\left(A_{n}\left(x\right)\right)$ and $G\left(P_{n}\left(x\right)\right)$
become arbitrarily close to each other for $n$ large under the L�vy
metric. The class of such functions $G$ includes most tractable functions
of a polytope, e.g., the volume, the surface area, the number of faces/edges,
the length of each side, the locations of vertices, the location of
the center of the mass, and many more, if not all, geometric properties
that one might be interested in studying. In many cases, we are interested
in obtaining information on the asymptotics of the distributions of
these random variables, e.g., the limit of their moments. With Theorem
\ref{thm:convergence in distribution main result in Section 2} and
the existing results on the Voronoi diagrams generated by homogeneous
Poisson point processes in hand, this task reduces to establishing
proper integrability of the concerned random variable for both $A_{n}\left(x\right)$
and $P_{n}\left(x\right)$. One demonstration of how this can be done
is given in Proposition 2.1 where the expected number of edges of
$A_{n}\left(x\right)$ and $P_{n}\left(x\right)$ is considered in
2D. We leave the consideration of higher moments of this random variable
to future work.\\

\noindent \textbf{Theorem \ref{thm:convergence in distribution main result in Section 2}.
}L\emph{et $f$ be a probability density function on $\mathbb{R}^{d}$
and $x$ be a Lebesgue point of $f$ such that $f\left(x\right)>0$.
Assume that 
\[
G:\left\{ \text{convex polytopes in }\mathbb{R}^{d}\right\} \rightarrow\mathbb{R}
\]
is any function such that $\forall n\ge1$, $G\left(A_{n}\left(x\right)\right)$
is measurable with respect to $\sigma\left(X_{1},\dots,X_{n}\right)$,
and $G\left(P_{n}\left(x\right)\right)$ is measurable with respect
to $\sigma\left(P_{nf\left(x\right)}\right)$. Then, 
\[
\lim\limits _{n\rightarrow\infty}\sup_{z\in\mathbb{R}}\left|\mathbb{P}\left(G\left(A_{n}\left(x\right)\right)\leq z\right)-\mathbb{P}\left(G\left(P_{n}\left(x\right)\right)\leq z\right)\right|=0.
\]
}
\begin{proof}
\noindent Let $f$ and $x$ be as in the statement of the theorem.
The main idea of the proof is that for large $n$, $A_{n}\left(x\right)$
is contained in a small region around $x$ on which $f$ is well approximated
by $f\left(x\right)$. Moreover, by choosing this region appropriately
we can establish that for some constant $c>0$ the number of points
among $X_{1},\cdots,X_{n}$ that fall into this region will have the
distribution Bin($n$,$\frac{c}{n}$), i.e., the binomial distribution
with parameters $n$ and $\frac{c}{n}$, which is ``approximately''
Poisson($c$), the Poisson distribution with parameter $c$, when
$n$ is large. As a result, for all $n$ sufficiently large, $A_{n}\left(x\right)$
can be approximately viewed as having arisen form a homogeneous Poisson
point process. Let us now make this precise.

We first apply the estimates of the diameter obtained in \cite{Devroye2017}
(see Lemma \ref{lem:Appendix control on diameter of A_n(x) and P_n(x)}
in the Appendix), which implies that for arbitrary $\epsilon>0$,
there exists $t>0$ such that for all $n$ sufficiently large,
\[
\mathbb{P}\left(D_{n}^{A}\left(x\right)>\frac{t}{n^{\frac{1}{d}}}\right)<\epsilon\text{ and }\mathbb{P}\left(D_{n}^{P}\left(x\right)>\frac{t}{n^{\frac{1}{d}}}\right)<\epsilon.
\]
Now, with $t$ chosen and fixed, we apply the Lebesgue Differentiation
Theorem (see, e.g., Theorem 20.19 of \cite{Devroye2015}) to get that
for any $\phi>0$ and for all $n$ sufficiently large, 
\[
\left|\frac{\mu_{f}\left(B_{x,2\frac{t}{n^{\frac{1}{d}}}}\right)}{\lambda\left(B_{x,2\frac{t}{n^{\frac{1}{d}}}}\right)}-f\left(x\right)\right|\leq\phi f\left(x\right),
\]
which leads to
\begin{equation}
\frac{\left(1-\phi\right)\lambda\left(B_{0,1}\right)f\left(x\right)2^{d}t^{d}}{n}\leq\mu_{f}\left(B_{x,2\frac{t}{n^{\frac{1}{d}}}}\right)\leq\frac{\left(1+\phi\right)\lambda\left(B_{0,1}\right)f\left(x\right)2^{d}t^{d}}{n}.\label{eq: estimate of mu measure of the ball with radius 2tn^(-1/d)}
\end{equation}
Recall that $N_{\left\{ X_{1},\cdots,X_{n}\right\} }\left(B\right)$
is the number of points among $X_{1},\cdots,X_{n}$ that lie in a
set $B\subseteq\mathbb{R}^{d}$. Then, choose $M>0$ large such that
for all $n$ sufficiently large, 
\[
\begin{split} & \mathbb{P}\left(N_{\left\{ X_{1},\cdots,X_{n}\right\} }\left(B_{x,2\frac{t}{n^{\frac{1}{d}}}}\right)>M\right)\\
= & \text{Bin}\left(n,\mu_{f}\left(B_{x,2t\cdot n^{-\frac{1}{d}}}\right)\right)\left(\left(M,\infty\right)\right)\\
\leq & e^{M-\left(1-\phi\right)\lambda\left(B_{0,1}\right)f\left(x\right)2^{d}t^{d}-M\ln\left(\frac{M}{\left(1+\phi\right)\lambda\left(B_{0,1}\right)f\left(x\right)2^{d}t^{d}}\right)},
\end{split}
\]
where the last inequality is due to (\ref{eq: estimate of mu measure of the ball with radius 2tn^(-1/d)})
and an application of Chernoff's bound (see Lemma \ref{lem: Appendix Chernoff's-bound}
in the Appendix). Clearly, by choosing $M$ large, we can make the
expression above smaller than $\epsilon$. From the above, we obtain
that 
\begin{equation}
\begin{split} & \mathbb{P}\left(G\left(A_{n}\left(x\right)\right)\leq z\right)-\mathbb{P}\left(G\left(A_{n}\left(x\right)\right)\leq z,D_{n}^{A}\left(x\right)\leq\frac{t}{n^{\frac{1}{d}}},N_{\left\{ X_{1},\cdots,X_{n}\right\} }\left(B_{x,2\frac{t}{n^{\frac{1}{d}}}}\right)\leq M\right)\\
\leq & \mathbb{P}\left(D_{n}^{A}\left(x\right)>\frac{t}{n^{\frac{1}{d}}}\right)+\mathbb{P}\left(N_{\left\{ X_{1},\cdots,X_{n}\right\} }\left(B_{x,2\frac{t}{n^{\frac{1}{d}}}}\right)>M\right)\leq2\epsilon.
\end{split}
\label{eq:imposing further constraints on N and diameter for A_n(x)}
\end{equation}
Now, observe that, 
\begin{equation}
\begin{split} & \mathbb{P}\left(G\left(A_{n}\left(x\right)\right)\leq z,D_{n}^{A}(x)\leq\frac{t}{n^{\frac{1}{d}}},N_{\left\{ X_{1},\cdots,X_{n}\right\} }\left(B_{x,2\frac{t}{n^{\frac{1}{d}}}}\right)\leq M\right)\\
= & \sum_{i=0}^{M}\mathbb{P}\left(G\left(A_{n}\left(x\right)\right)\leq z,D_{n}^{A}(x)\leq\frac{t}{n^{\frac{1}{d}}},N_{\left\{ X_{1},\cdots,X_{n}\right\} }\left(B_{x,2\frac{t}{n^{\frac{1}{d}}}}\right)=i\right)\\
= & \sum_{i=0}^{M}\mathbb{P}\left(\left.G\left(A_{n}\left(x\right)\right)\leq z,D_{n}^{A}(x)\leq\frac{t}{n^{\frac{1}{d}}}\right|\left\{ X_{1},\cdots,X_{n}\right\} \cap B_{x,2\frac{t}{n^{\frac{1}{d}}}}=\left\{ X_{1},\cdots,X_{i}\right\} \right)\\
 & \hspace{6cm}\cdot\binom{n}{i}\left[\mu_{f}\left(B_{x,2\frac{t}{n^{\frac{1}{d}}}}\right)\right]^{i}\left[1-\mu_{f}\left(B_{x,2\frac{t}{n^{\frac{1}{d}}}}\right)\right]^{n-i}.
\end{split}
\label{eq: computation of the prob on A_n(x) with further constraints}
\end{equation}

On the other hand, for every $n\geq1$, in the case when the Voronoi
diagram is generated by the homogeneous Poisson point process $P_{nf\left(x\right)}$,
$N_{P_{nf\left(x\right)}}\left(B_{x,2\frac{t}{n^{\frac{1}{d}}}}\right)$
is just a Poisson random variable with parameter 
\[
nf\left(x\right)\lambda\left(B_{x,2\frac{t}{n^{\frac{1}{d}}}}\right)=\lambda\left(B_{0,1}\right)f\left(x\right)2^{d}t^{d}
\]
which does not depend on $n$. Thus, by choosing $M$ sufficiently
large, we can make 
\[
\mathbb{P}\left(N_{P_{nf\left(x\right)}}\left(B_{x,2\frac{t}{n^{\frac{1}{d}}}}\right)>M\right)<\epsilon
\]
for all $n\geq1$, and hence we also have 
\begin{equation}
\mathbb{P}\left(G\left(P_{n}\left(x\right)\right)\leq z\right)-\mathbb{P}\left(G\left(P_{n}\left(x\right)\right)\leq z,D_{n}^{P}\left(x\right)\leq\frac{t}{n^{\frac{1}{d}}},N_{P_{nf\left(x\right)}}\left(B_{x,2\frac{t}{n^{\frac{1}{d}}}}\right)\leq M\right)\leq2\epsilon.\label{eq:imposing further constraints on N and diameter for P_n(x)}
\end{equation}
Further, 
\begin{equation}
\begin{split} & \mathbb{P}\left(G\left(P_{n}\left(x\right)\right)\leq z,D_{n}^{P}\left(x\right)\leq\frac{t}{n^{\frac{1}{d}}},N_{P_{nf\left(x\right)}}\left(B_{x,2\frac{t}{n^{\frac{1}{d}}}}\right)\leq M\right)\\
= & \sum_{i=0}^{M}\mathbb{P}\left(G\left(P_{n}\left(x\right)\right)\leq z,D_{n}^{P}\left(x\right)\leq\frac{t}{n^{\frac{1}{d}}},N_{P_{nf\left(x\right)}}\left(B_{x,2\frac{t}{n^{\frac{1}{d}}}}\right)=i\right)\\
= & \sum_{i=0}^{M}\mathbb{P}\left(\left.G\left(P_{n}\left(x\right)\right)\leq z,D_{n}^{P}\left(x\right)\leq\frac{t}{n^{\frac{1}{d}}}\right|N_{P_{nf\left(x\right)}}\left(B_{x,2\frac{t}{n^{\frac{1}{d}}}}\right)=i\right)\\
 & \hspace{6cm}\cdot\frac{\left[\lambda\left(B_{0,1}\right)f\left(x\right)2^{d}t^{d}\right]^{i}}{i!}e^{-\lambda\left(B_{0,1}\right)f\left(x\right)2^{d}t^{d}}.
\end{split}
\label{eq:computation of the prob on P_n(x) with further constraints}
\end{equation}

Combining (\ref{eq:imposing further constraints on N and diameter for A_n(x)})
- (\ref{eq:computation of the prob on P_n(x) with further constraints}),
we conclude that in order to control 
\[
\sup_{z\in\mathbb{R}}\left|\mathbb{P}\left(G\left(A_{n}\left(x\right)\right)\leq z\right)-\mathbb{P}\left(G\left(P_{n}\left(x\right)\right)\leq z\right)\right|,
\]

\noindent it is sufficient to prove the following two facts.\\

\noindent \textbf{Fact 1. }For every $i\in\left\{ 0,\cdots,M\right\} $,
\[
\lim\limits _{n\rightarrow\infty}\binom{n}{i}\left[\mu_{f}\left(B_{x,2\frac{t}{n^{\frac{1}{d}}}}\right)\right]^{i}\left[1-\mu_{f}\left(B_{x,2\frac{t}{n^{\frac{1}{d}}}}\right)\right]^{n-i}=\frac{\left[\lambda\left(B_{0,1}\right)f\left(x\right)2^{d}t^{d}\right]^{i}}{i!}e^{-\lambda\left(B_{0,1}\right)f\left(x\right)2^{d}t^{d}}.
\]
\textbf{Fact 2. }For every $i\in\left\{ 0,\cdots,M\right\} $, 
\[
\begin{split} & \lim\limits _{n\rightarrow\infty}\sup_{z\in\mathbb{R}}\left|\mathbb{P}\left(\left.G\left(A_{n}\left(x\right)\right)\leq z,\ D_{n}^{A}\left(x\right)\leq\frac{t}{n^{\frac{1}{d}}}\right|\left\{ X_{1},\cdots,X_{n}\right\} \cap B_{x,2\frac{t}{n^{\frac{1}{d}}}}=\left\{ X_{1},\cdots,X_{i}\right\} \right)\right.\\
 & \hspace{4.5cm}-\left.\mathbb{P}\left(\left.G\left(P_{n}\left(x\right)\right)\leq z,\ D_{n}^{P}\left(x\right)\leq\frac{t}{n^{\frac{1}{d}}}\right|N_{P_{nf\left(x\right)}}\left(B_{x,2\frac{t}{n^{\frac{1}{d}}}}\right)=i\right)\right|=0.
\end{split}
\]

\noindent \emph{Proof of Fact 1:} Fact 1 follows from a straightforward
application of the Lebesgue Differentiation Theorem. To be specific,
let $\phi>0$ be arbitrary. Due to (\ref{eq: estimate of mu measure of the ball with radius 2tn^(-1/d)}),
we have that 
\[
\begin{split} & \limsup_{n\rightarrow\infty}\binom{n}{i}\left[\mu_{f}\left(B_{x,2\frac{t}{n^{\frac{1}{d}}}}\right)\right]^{i}\left[1-\mu_{f}\left(B_{x,2\frac{t}{n^{\frac{1}{d}}}}\right)\right]^{n-i}\\
\leq & \limsup_{n\rightarrow\infty}{n \choose i}\left[\frac{\left(1+\phi\right)\lambda\left(B_{0,1}\right)f\left(x\right)2^{d}t^{d}}{n}\right]^{i}\left[1-\frac{\left(1-\phi\right)\lambda\left(B_{0,1}\right)f\left(x\right)2^{d}t^{d}}{n}\right]^{n-i}\\
= & \frac{\left[\left(1+\phi\right)\lambda\left(B_{0,1}\right)f\left(x\right)2^{d}t^{d}\right]^{i}}{i!}e^{-\left(1-\phi\right)\lambda\left(B_{0,1}\right)f\left(x\right)2^{d}t^{d}}
\end{split}
\]
and similarly, 
\[
\liminf_{n\rightarrow\infty}\binom{n}{i}\left[\mu_{f}\left(B_{x,2\frac{t}{n^{\frac{1}{d}}}}\right)\right]^{i}\left[1-\mu_{f}\left(B_{x,2\frac{t}{n^{\frac{1}{d}}}}\right)\right]^{n-i}\geq\frac{\left[\left(1-\phi\right)\lambda\left(B_{0,1}\right)f\left(x\right)2^{d}t^{d}\right]^{i}}{i!}e^{-\left(1+\phi\right)\lambda\left(B_{0,1}\right)f\left(x\right)2^{d}t^{d}}.
\]
Since $\phi>0$ is arbitrary, we have proven Fact 1.\\

\noindent \emph{Proof of Fact 2:} Let $x$, $t$, $i$ and $n$ be
the same as above. To simplify the notations, for every $z\in\mathbb{R}$,
we will denote by $E_{z}$ the collection of any set of $i$ points
$\{x_{1},\dots,x_{i}\}\subseteq\mathbb{R}^{d}$, such that the following
is true:

\noindent (i) $\left\{ x_{1},\cdots,x_{i}\right\} \subseteq B_{x,2\frac{t}{n^{\frac{1}{d}}}}$;
\\
(ii) if $A_{i}\left(x\right)$ is the cell with nucleus $x$ in the
Voronoi diagram generated by $\left\{ x,x_{1},\cdots,x_{i}\right\} $,
then 
\[
G\left(A_{i}\left(x\right)\right)\leq z;
\]
(iii) if $D_{i}^{A}\left(x\right)$ is the diameter of $A_{i}\left(x\right)$,
then $D_{i}^{A}\left(x\right)\leq\frac{t}{n^{\frac{1}{d}}}$. 

By Lemma \ref{lem:Appendix range of nuclei relevant to configuration of cell }
in the Appendix, we know that whenever $D_{n}^{A}\left(x\right)\leq\frac{t}{n^{\frac{1}{d}}}$
(alternatively $D_{n}^{P}\left(x\right)\leq\frac{t}{n^{\frac{1}{d}}}$),
points that are outside of $B_{x,2\frac{t}{n^{\frac{1}{d}}}}$ cannot
affect the shape of the cell with nucleus $x$, or in other words,
only points that are inside $B_{x,2\frac{t}{n^{\frac{1}{d}}}}$ should
be considered when studying the configuration of $A_{n}\left(x\right)$.
As a consequence, given that $X_{1},\cdots,X_{i}$ are the only points
among $X_{1},\cdots,X_{n}$ that are inside $B_{x,2\frac{t}{n^{\frac{1}{d}}}}$,
$A_{n}\left(x\right)$ is the same as $A_{i}\left(x\right)$ when
only the points $\left\{ x,X_{1},\cdots,X_{i}\right\} $ are used
in generating the Voronoi diagram. Therefore, for any $z\in\mathbb{R}$,
\[
\begin{split} & \mathbb{P}\left(\left.G\left(A_{n}\left(x\right)\right)\leq z,D_{n}^{A}\left(x\right)\leq\frac{t}{n^{\frac{1}{d}}}\right|\left\{ X_{1},\cdots,X_{n}\right\} \cap B_{x,2\frac{t}{n^{\frac{1}{d}}}}=\left\{ X_{1},\cdots,X_{i}\right\} \right)\\
= & \frac{\mathbb{P}\left(\left\{ X_{1},\dots,X_{i}\right\} \in E_{z},X_{i+1},\dots,X_{n}\notin B_{x,2\frac{t}{n^{\frac{1}{d}}}}\right)}{\mathbb{P}\left(\left\{ X_{1},\cdots,X_{n}\right\} \cap B_{x,2\frac{t}{n^{\frac{1}{d}}}}=\left\{ X_{1},\cdots,X_{i}\right\} \right)}\\
= & \frac{\mathbb{P}\left(\left\{ X_{1},\dots,X_{i}\right\} \in E_{z}\right)\mathbb{P}\left(X_{i+1},\dots,X_{n}\notin B_{x,2\frac{t}{n^{\frac{1}{d}}}}\right)}{\mathbb{P}\left(X_{1},\dots,X_{i}\in B_{x,2\frac{t}{n^{\frac{1}{d}}}}\right)\mathbb{P}\left(X_{i+1},\dots,X_{n}\notin B_{x,2\frac{t}{n^{\frac{1}{d}}}}\right)}\\
= & \mathbb{P}\left(\left\{ X_{1},\dots,X_{i}\right\} \in E_{z}\left|X_{1},\dots,X_{i}\in B_{x,2\frac{t}{n^{\frac{1}{d}}}}\right.\right).
\end{split}
\]
By a similar argument as above and basic properties of homogeneous
Poisson point processes, we also have 
\[
\mathbb{P}\left(\left.G\left(P_{n}\left(x\right)\right)\leq z,D_{n}^{P}\left(x\right)\leq\frac{t}{n^{\frac{1}{d}}}\right|N_{P_{nf\left(x\right)}}\left(B_{x,2\frac{t}{n^{\frac{1}{d}}}}\right)=i\right)=\mathbb{P}\left(\left\{ U_{1},\cdots,U_{i}\right\} \in E_{z}\right)
\]
where $U_{1},\cdots,U_{i}$ are i.i.d. random variables with the uniform
distribution on $B_{x,2\frac{t}{n^{\frac{1}{d}}}}$.

It is easy to see that the conditional distribution of $\left\{ X_{1},\cdots,X_{n}\right\} $,
conditioning on the event that $X_{1},\cdots,X_{i}\in B_{x,2\frac{t}{n^{\frac{1}{d}}}}$,
has the probability density function 
\[
\frac{f\left(x_{1}\right)f\left(x_{2}\right)\cdots f\left(x_{i}\right)}{\left[\mu_{f}\left(B_{x,2\frac{t}{n^{\frac{1}{d}}}}\right)\right]^{i}}\text{ for every }x_{1},\cdots,x_{i}\in B_{x,2\frac{t}{n^{\frac{1}{d}}}}.
\]
Thus, to prove Fact 2, it is enough for us to show that 
\[
\begin{split} & \left|\mathbb{P}\left(\left\{ X_{1},\dots,X_{i}\right\} \in E_{z}\left|X_{1},\dots,X_{i}\in B_{x,2\frac{t}{n^{\frac{1}{d}}}}\right.\right)-\mathbb{P}\left(\left\{ U_{1},\cdots,U_{i}\right\} \in E_{z}\right)\right|\\
\leq & \idotsint_{E_{z}}\left|\prod_{j=1}^{i}\frac{f\left(x_{j}\right)}{\mu_{f}\left(B_{x,2\frac{t}{n^{\frac{1}{d}}}}\right)}-\prod_{j=1}^{i}\frac{n}{\lambda\left(B_{0,1}\right)2^{d}t^{d}}\right|dx_{1}\cdots dx_{i}\\
\leq & \int_{B_{x,2\frac{t}{n^{\frac{1}{d}}}}}\cdots\int_{B_{x,2\frac{t}{n^{\frac{1}{d}}}}}\left|\prod_{j=1}^{i}\frac{f\left(x_{j}\right)}{\mu_{f}\left(B_{x,2\frac{t}{n^{\frac{1}{d}}}}\right)}-\prod_{j=1}^{i}\frac{n}{\lambda\left(B_{0,1}\right)2^{d}t^{d}}\right|dx_{1}\cdots dx_{i}\\
\rightarrow & 0\text{ as }n\rightarrow\infty.
\end{split}
\]
Let $\phi\in(0,1)$ be arbitrary. Using Lemma \ref{lem:Appendix Lebesgue density theorem}
in the Appendix, we have that $\exists\delta>0$, such that for every
$\eta\in\left(0,\delta\right)$, 
\[
\int_{B_{x,\eta}}\left|\frac{f\left(u\right)}{\mu_{f}\left(B_{x,\eta}\right)}-\frac{1}{\lambda\left(B_{0,1}\right)\eta^{d}}\right|du\leq\phi.
\]
It follows that for all $n$ sufficiently large, 
\[
\begin{split} & \int_{B_{x,2\frac{t}{n^{\frac{1}{d}}}}}\cdots\int_{B_{x,2\frac{t}{n^{\frac{1}{d}}}}}\left|\prod_{j=1}^{i}\frac{f\left(x_{j}\right)}{\mu_{f}\left(B_{x,2\frac{t}{n^{\frac{1}{d}}}}\right)}-\prod_{j=1}^{i}\frac{n}{\lambda\left(B_{0,1}\right)2^{d}t^{d}}\right|dx_{1}\cdots dx_{i}\\
\leq & \int_{B_{x,2\frac{t}{n^{\frac{1}{d}}}}}\cdots\int_{B_{x,2\frac{t}{n^{\frac{1}{d}}}}}\left|\prod_{j=2}^{i}\frac{f\left(x_{j}\right)}{\mu_{f}\left(B_{x,2\frac{t}{n^{\frac{1}{d}}}}\right)}-\prod_{j=2}^{i}\frac{n}{\lambda\left(B_{0,1}\right)2^{d}t^{d}}\right|\frac{f\left(x_{1}\right)}{\mu_{f}\left(B_{x,2\frac{t}{n^{\frac{1}{d}}}}\right)}dx_{1}\cdots dx_{i}\\
 & \hspace{2cm}+\int_{B_{x,2\frac{t}{n^{\frac{1}{d}}}}}\cdots\int_{B_{x,2\frac{t}{n^{\frac{1}{d}}}}}\left|\frac{f\left(x_{1}\right)}{\mu_{f}\left(B_{x,2\frac{t}{n^{\frac{1}{d}}}}\right)}-\frac{n}{\lambda\left(B_{0,1}\right)2^{d}t^{d}}\right|\prod_{j=2}^{i}\frac{n}{\lambda\left(B_{0,1}\right)2^{d}t^{d}}dx_{1}\cdots dx_{i}\\
\leq & \int_{B_{x,2\frac{t}{n^{\frac{1}{d}}}}}\cdots\int_{B_{x,2\frac{t}{n^{\frac{1}{d}}}}}\left|\prod_{j=2}^{i}\frac{f\left(x_{j}\right)}{\mu_{f}\left(B_{x,2\frac{t}{n^{\frac{1}{d}}}}\right)}-\prod_{j=2}^{i}\frac{n}{\lambda\left(B_{0,1}\right)2^{d}t^{d}}\right|dx_{2}\cdots dx_{i}\\
 & \hspace{6cm}+\phi\int_{B_{x,2\frac{t}{n^{\frac{1}{d}}}}}\cdots\int_{B_{x,2\frac{t}{n^{\frac{1}{d}}}}}\left(\frac{n}{\lambda\left(B_{0,1}\right)2^{d}t^{d}}\right)^{i-1}dx_{2}\cdots dx_{i}\\
= & \int_{B_{x,2\frac{t}{n^{\frac{1}{d}}}}}\cdots\int_{B_{x,2\frac{t}{n^{\frac{1}{d}}}}}\left|\prod_{j=2}^{i}\frac{f\left(x_{j}\right)}{\mu_{f}\left(B_{x,2\frac{t}{n^{\frac{1}{d}}}}\right)}-\prod_{j=2}^{i}\frac{n}{\lambda\left(B_{0,1}\right)2^{d}t^{d}}\right|dx_{2}\cdots dx_{i}+\phi\\
\leq & i\phi\text{ (by repeating this process \ensuremath{i} times)}.
\end{split}
\]
Therefore, we have that 
\[
\lim_{n\rightarrow\infty}\sup_{z\in\mathbb{R}}\left|\mathbb{P}\left(\left\{ X_{1},\dots,X_{i}\right\} \in E_{z}\left|X_{1},\dots,X_{i}\in B_{x,2\frac{t}{n^{\frac{1}{d}}}}\right.\right)-\mathbb{P}\left(\left\{ U_{1},\cdots,U_{i}\right\} \in E_{z}\right)\right|=0,
\]
which concludes the proof of Fact 2, as well as the proof of Theorem
\ref{thm:convergence in distribution main result in Section 2}.
\end{proof}
\noindent We now give an explicit example of how Theorem \ref{thm:convergence in distribution main result in Section 2}
can be used to derive a limit for the expected number of edges of
$A_{n}\left(x\right)$ when $d=2$.
\begin{prop}
\label{prop:result on bddness of second moment of number of edges}Let
$d=2$, $f$ be a probability density function on $\mathbb{R}^{d}$
and $x$ be a Lebesgue point of $f$ such that $f\left(x\right)>0$.
Then, 
\[
\sup_{n\geq1}\mathbb{E}\left[\left(\text{the number of edges of }A_{n}\left(x\right)\right)^{2}\right]<\infty.
\]
Moreover, 
\[
\lim\limits _{n\rightarrow\infty}\mathbb{E}\left[\text{the number of edges of }A_{n}\left(x\right)\right]=6.
\]
\end{prop}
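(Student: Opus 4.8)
\textbf{Plan.} The plan is to deduce the second assertion from the first one together with Theorem~\ref{thm:convergence in distribution main result in Section 2}; the substance is the uniform second moment bound. Write $G(\cdot)$ for the number of edges of a planar polytope. First observe that $G(P_n(x))$ has a law $\nu$ not depending on $n$: the map $y\mapsto x+(nf(x))^{1/d}(y-x)$ is a similarity transformation carrying $\{x\}\cup P_{nf(x)}$ onto $\{x\}\cup P_{1}$ and preserving the combinatorial type of every Voronoi cell, so, using the identification (recalled in the Introduction) of the cell with fixed nucleus in $\{x\}\cup P_{\Lambda}$ with the typical Poisson--Voronoi cell, $G(P_n(x))$ is distributed as the number of edges of the typical cell of a unit-intensity planar Poisson--Voronoi tessellation; by Meijering's classical computation, $\int y\,\nu(dy)=6$ (\cite{Meijering1953}; see also \cite{Okabe2000}). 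Applying Theorem~\ref{thm:convergence in distribution main result in Section 2} to $G$ gives $\sup_{z\in\mathbb{R}}\bigl|\mathbb{P}(G(A_n(x))\le z)-\nu((-\infty,z])\bigr|\to0$, i.e.\ $G(A_n(x))\Rightarrow\nu$. If $\sup_{n\ge1}\mathbb{E}[G(A_n(x))^2]<\infty$, then $\{G(A_n(x))\}_{n\ge1}$ is uniformly integrable, and weak convergence together with uniform integrability yields $\mathbb{E}[G(A_n(x))]\to\int y\,\nu(dy)=6$. So everything reduces to the uniform second moment bound.

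\textbf{Geometric input.} If $X_j$ shares an edge with $x$ in the Voronoi diagram generated by $\{x,X_1,\dots,X_n\}$, that edge contains a point $c\in A_n(x)$ with $\|c-x\|=\|c-X_j\|$, so $\|X_j-x\|\le2\|c-x\|\le2D_n^A(x)$ because $x\in A_n(x)$. Hence, whenever $A_n(x)$ is bounded,
\[
G(A_n(x))\ \le\ N_{\{X_1,\dots,X_n\}}\bigl(\overline{B_{x,2D_n^A(x)}}\bigr).
\]
This I combine with the diameter tail of \cite{Devroye2017} (Lemma~\ref{lem:Appendix control on diameter of A_n(x) and P_n(x)}), taken in the form: there are $c_1,c_2>0$ such that for all sufficiently large $n$ and all $s>0$, $\mathbb{P}(D_n^A(x)>s)\le c_1 e^{-c_2 f(x) n s^d}$ (equivalently $\mathbb{P}(D_n^A(x)>t n^{-1/d})\le c_1 e^{-c_2 f(x) t^d}$ for all $t>0$); in particular $\mathbb{P}(A_n(x)\ \text{unbounded})\le c_1 e^{-c_2 f(x) n}$. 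If only the weaker ``$\limsup$ for each fixed $t$'' form is available, one additionally uses that a macroscopic cell is exponentially rare in $n$: since $x$ is a Lebesgue point with $f(x)>0$, every sufficiently large ball through $x$ contains a spherical cap at $x$ whose $\mu_f$-measure is bounded below uniformly in direction (by the Lebesgue Differentiation estimate used in the proof of Theorem~\ref{thm:convergence in distribution main result in Section 2}), and a union bound over a fixed finite net of directions gives the same exponential-in-$n$ control.

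\textbf{The moment estimate.} Fix a large constant $T$ and split
\[
\mathbb{E}[G(A_n(x))^2]=\mathbb{E}\bigl[G(A_n(x))^2\,\mathbb{I}_{\{D_n^A(x)\le T n^{-1/d}\}}\bigr]+\sum_{j\ge0}S_{n,j}+\mathbb{E}\bigl[G(A_n(x))^2\,\mathbb{I}_{\{A_n(x)\ \text{unbounded}\}}\bigr],
\]
where $S_{n,j}:=\mathbb{E}\bigl[G(A_n(x))^2\,\mathbb{I}_{\{2^jT n^{-1/d}<D_n^A(x)\le 2^{j+1}T n^{-1/d}\}}\bigr]$. The first term is at most $\mathbb{E}\bigl[N_{\{X_1,\dots,X_n\}}(\overline{B_{x,2T n^{-1/d}}})^2\bigr]$, a second moment of a binomial whose parameter converges exactly as in the proof of Fact~1 above, so its $\limsup$ equals $\beta_T+\beta_T^2$ with $\beta_T:=\lambda(B_{0,1})f(x)2^d T^d$. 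The last term is $\le(n-1)^2 c_1 e^{-c_2 f(x) n}\to0$. For $S_{n,j}$: on the relevant event $A_n(x)$ is bounded with $D_n^A(x)\le 2^{j+1}T n^{-1/d}$, so $G(A_n(x))\le N_{\{X_1,\dots,X_n\}}(\overline{B_{x,2^{j+2}T n^{-1/d}}})$, and Cauchy--Schwarz gives $S_{n,j}\le\bigl(\mathbb{E}[N_{\{X_1,\dots,X_n\}}(\overline{B_{x,2^{j+2}T n^{-1/d}}})^4]\bigr)^{1/2}\bigl(\mathbb{P}(D_n^A(x)>2^jT n^{-1/d})\bigr)^{1/2}$. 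For the shells whose radius $2^{j+2}T n^{-1/d}$ lies below a fixed scale $r_0$ (i.e.\ $j\lesssim\frac1d\log_2 n$), the Lebesgue-point estimate bounds the binomial mean by $O(2^{jd}T^d)$, hence its fourth moment by $O\bigl((1+2^{jd}T^d)^4\bigr)$, while the diameter tail bounds the probability factor by $c_1 e^{-c_2 f(x) 2^{jd}T^d}$; this dominates those $S_{n,j}$ by a summable sequence $b_j=O\bigl((1+2^{jd}T^d)^2\bigr)e^{-\frac12 c_2 f(x) 2^{jd}T^d}$ not depending on $n$. For the remaining $O(\log n)$ shells one uses the crude bound $G(A_n(x))\le n-1$ together with $\mathbb{P}(D_n^A(x)>2^jT n^{-1/d})\le c_1 e^{-c_2 f(x) 2^{jd}T^d}\le c_1 e^{-c_3 n}$ (since there $2^{jd}T^d\gtrsim n$), contributing at most $O(n^2)e^{-c_3 n}\to0$. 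Therefore $\limsup_{n\to\infty}\mathbb{E}[G(A_n(x))^2]\le\beta_T+\beta_T^2+\sum_{j\ge0}b_j<\infty$; since $\mathbb{E}[G(A_n(x))^2]\le(n-1)^2<\infty$ for each fixed $n$, this yields $\sup_{n\ge1}\mathbb{E}[G(A_n(x))^2]<\infty$, and with the first paragraph we conclude $\mathbb{E}[G(A_n(x))]\to6$.

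\textbf{Main obstacle.} The delicate step is the uniform-in-$n$ bookkeeping of the shell sum: for each fixed $n$ only the shells of radius below the fixed macroscopic scale $r_0$ carry the ``Poisson-like'' mean bound $O(2^{jd}T^d)$, so one must argue that the remaining $O(\log n)$ shells and the unbounded event are negligible, which is precisely where the exponential-in-$n$ decay of $\mathbb{P}(D_n^A(x)\gtrsim1)$, hence the Lebesgue-point hypothesis on $f$ at $x$, is used in an essential way. By contrast the neighbour bound $\|X_j-x\|\le2D_n^A(x)$, the binomial moment estimates, and the concluding uniform-integrability argument are routine.
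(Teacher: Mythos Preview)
Your reduction of the second claim to the first (uniform $L^2$ bound $+$ Theorem~\ref{thm:convergence in distribution main result in Section 2} $+$ Meijering $+$ uniform integrability) is exactly what the paper does. For the second moment bound itself, however, your route is genuinely different. The paper never bounds $G(A_n(x))$ by a point count; instead it writes the number of vertices ($=$ number of edges) as $\sum_{i<j}\mathbb{I}_{E_{i,j}}$, where $E_{i,j}$ is the event that $A_n(x)$, $A_n(X_i)$, $A_n(X_j)$ share a vertex (the circumcenter $c(x,X_i,X_j)$), squares, and shows that $n^2\mathbb{P}(E_{1,2})$, $n^3\mathbb{P}(E_{1,2}\cap E_{1,3})$, $n^4\mathbb{P}(E_{1,2}\cap E_{3,4})$ are all bounded by establishing a tail estimate of the type $\mathbb{P}\bigl(\mu_f(B_{c(x,X_1,X_2),\|x-c(x,X_1,X_2)\|}\cup B_{c(x,X_3,X_4),\|x-c(x,X_3,X_4)\|})\le z\bigr)=O(z^4)$ as $z\downarrow0$. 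Your approach---the neighbour bound $G(A_n(x))\le N_{\{X_1,\dots,X_n\}}(\overline{B_{x,2D_n^A(x)}})$, a dyadic decomposition on $D_n^A(x)$, and the sub-Gaussian diameter tail---is more elementary, uses only the diameter estimate already in hand, and would carry over verbatim to higher moments of $G(A_n(x))$ and to face counts in higher dimensions. The paper's circumcenter decomposition is finer in a different direction: it isolates exactly the quantities one would analyse to obtain \emph{limits} (not just bounds) of moments, and is the same machinery that drives the moment computations in Section~3.

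There is one genuine bookkeeping slip in your shell sum. Once $2^{j+2}Tn^{-1/d}$ exceeds the fixed scale $r_0$ there are \emph{infinitely many} remaining shells, not $O(\log n)$: a bounded $A_n(x)$ can have arbitrarily large diameter, so the events $\{2^jTn^{-1/d}<D_n^A(x)\le 2^{j+1}Tn^{-1/d}\}$ are nonempty for arbitrarily large $j$. Moreover, at those macroscopic scales the cone argument does \emph{not} deliver $\mathbb{P}(D_n^A(x)>2^jTn^{-1/d})\le c_1e^{-c_2f(x)2^{jd}T^d}$; it only gives $\mathbb{P}(D_n^A(x)>r_0)\le c_1e^{-c_3 n}$, and summing $(n-1)^2c_1e^{-c_3 n}$ over infinitely many $j$ diverges. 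The fix is immediate: do not sum those shells individually but collapse them,
\[
\sum_{j:\,2^{j}Tn^{-1/d}>r_0/4} S_{n,j}\ \le\ \mathbb{E}\!\left[G(A_n(x))^2\,\mathbb{I}_{\{r_0/4<D_n^A(x)<\infty\}}\right]\ \le\ (n-1)^2\,\mathbb{P}\bigl(D_n^A(x)>r_0/4\bigr)\ \le\ n^2c_1e^{-c_3 n}\to0.
\]
With this one-line correction (and your separate handling of the unbounded cell), your uniform second moment bound goes through, and the conclusion $\mathbb{E}[G(A_n(x))]\to6$ follows as you wrote.
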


\begin{proof}
\noindent We recall that the expected number of edges of $P_{n}\left(x\right)$
is equal to 6 (\cite{Meijering1953}). Also, by a direct corollary
of Theorem 5.3 of \cite{Henrich2008}, we have that for every $n\geq1$,
\[
\mathbb{E}\left[\left(\text{the number of edges of }P_{n}\left(x\right)\right)^{2}\right]=\mathbb{E}\left[\left(\text{the number of edges of }P_{1}\left(x\right)\right)^{2}\right]<\infty.
\]
Now assume that the first statement of Proposition \ref{prop:result on bddness of second moment of number of edges}
holds, i.e., the second moment of the number of edges of $A_{n}\left(x\right)$
is bounded in $n$. By taking $G\left(\cdot\right)$ to be the number
of edges of its input in Theorem \ref{thm:convergence in distribution main result in Section 2},
one can easily check that 
\[
\lim\limits _{n\rightarrow\infty}\mathbb{E}\left[\text{the number of edges of }A_{n}\left(x\right)\right]=\lim\limits _{n\rightarrow\infty}\mathbb{E}\left[\text{the number of edges of }P_{n}\left(x\right)\right]=6.
\]
Therefore, we only need to focus on proving the first statement of
Proposition \ref{prop:result on bddness of second moment of number of edges}. 

To this end, we first observe that since every vertex of $A_{n}\left(x\right)$
has degree 3 in the Voronoi diagram, every vertex of $A_{n}\left(x\right)$
is also shared by a unique pair of adjacent Voronoi cells of $A_{n}\left(x\right)$.
For all $i,j\in\left\{ 1,\cdots,n\right\} $ such that $i\neq j$,
let $E_{i,j}$ denote the event that $A_{n}\left(X_{i}\right)$, $A_{n}\left(X_{j}\right)$
and $A_{n}\left(x\right)$ share a vertex. Then, we have that 
\[
\begin{split}\mathbb{E}\left[\left(\text{the number of edges of }A_{n}\left(x\right)\right)^{2}\right] & =\mathbb{E}\left[\left(\text{the number of vertices of }A_{n}\left(x\right)\right)^{2}\right]\\
 & =\mathbb{E}\left[\left(\sum_{i,j\in\{1,\cdots,n\},\ i<j}\mathbb{I}_{E_{i,j}}\right)^{2}\right].
\end{split}
\]
Let $i$, $j$, $k$ and $l$ be four distinct integers in $\left\{ 1,\cdots,n\right\} $.
There are three types of terms resulting from the expectation above:\\

\noindent 1. There are $\frac{n\left(n-1\right)}{2}=\mathcal{O}\left(n^{2}\right)$
terms of the form 
\[
\mathbb{E}\left[\left(\mathbb{I}_{E_{i,j}}\right)^{2}\right]=\mathbb{P}\left(E_{i,j}\right).
\]
2. There are $\frac{n\left(n-1\right)}{2}\cdot2\left(n-2\right)=\mathcal{O}\left(n^{3}\right)$
terms of the form 
\[
\mathbb{E}\left[\mathbb{I}_{E_{i,j}}\mathbb{I}_{E_{i,k}}\right]=\mathbb{P}\left(E_{i,j}\cap E_{i,k}\right).
\]
3. There are $\frac{n\left(n-1\right)}{2}\cdot\frac{\left(n-2\right)\left(n-3\right)}{2}=\mathcal{O}\left(n^{4}\right)$
terms of the form 
\[
\mathbb{E}\left[\mathbb{I}_{E_{i,j}}\mathbb{I}_{E_{l,k}}\right]=\mathbb{P}\left(E_{i,j}\cap E_{l,k}\right).
\]
In particular, since $X_{1},\cdots,X_{n}$ are all identically distributed,
it is enough to show that the following three quantities are finite:
\[
\sup_{n\geq2}n^{2}\mathbb{P}\left(E_{1,2}\right),\:\sup_{n\geq3}n^{3}\mathbb{P}\left(E_{1,2}\cap E_{1,3}\right)\text{ and }\sup_{n\geq4}n^{4}\mathbb{P}\left(E_{1,2}\cap E_{3,4}\right).
\]
We will only give an explicit proof that the last quantity is finite.
The finiteness of the first two quantities can be shown in an identical
manner.

For any three points $w,u,v\in\mathbb{R}^{2}$ that do not lie on
the same line, let $c\left(w,u,v\right)$ denote the circumcenter
of $w$, $u$ and $v$. Observe that $E_{i,j}$ is equivalent to the
event that 
\[
\left\Vert X_{k}-c\left(x,X_{i},X_{j}\right)\right\Vert \geq\left\Vert x-c\left(x,X_{i},X_{j}\right)\right\Vert \,\forall k\in\left\{ 1,\cdots,n\right\} \backslash\left\{ i,j\right\} .
\]
To see this, notice that the vertex shared by $A_{n}\left(x\right)$,
$A_{n}\left(X_{i}\right)$, and $A_{n}\left(X_{j}\right)$ is just
$c\left(x,X_{i},X_{j}\right)$, and $c\left(x,X_{i},X_{j}\right)$
is in all three of $A_{n}\left(x\right)$, $A_{n}\left(X_{i}\right)$,
and $A_{n}\left(X_{j}\right)$ if and only if $c\left(x,X_{i},X_{j}\right)$
is not strictly closer to any of the other points in $X_{1},\cdots,X_{n}$.
Therefore, we have that
\[
\begin{split} & \mathbb{P}\left(E_{1,2}\cap E_{3,4}\right)\\
\le & \mathbb{P}\left(\bigcap_{k=5}^{n}\left\{ \left\Vert X_{k}-c\left(x,X_{1},X_{2}\right)\right\Vert \geq\left\Vert x-c\left(x,X_{1},X_{2}\right)\right\Vert ,\left\Vert X_{k}-c\left(x,X_{3},X_{4}\right)\right\Vert \geq\left\Vert x-c\left(x,X_{3},X_{4}\right)\right\Vert \right\} \right)\\
= & \mathbb{E}\left[\left(1-\mu_{f}\left(B_{c\left(x,X_{1},X_{2}\right),\left\Vert x-c\left(x,X_{1},X_{2}\right)\right\Vert }\cup B_{c\left(x,X_{3},X_{4}\right),\left\Vert x-c\left(x,X_{3},X_{4}\right)\right\Vert }\right)\right)^{n-4}\right].
\end{split}
\]
We claim that in order to prove that 
\[
\sup_{n\geq4}n^{4}\mathbb{E}\left[\left(1-\mu_{f}\left(B_{c\left(x,X_{1},X_{2}\right),\left\Vert x-c\left(x,X_{1},X_{2}\right)\right\Vert }\cup B_{c\left(x,X_{3},X_{4}\right),\left\Vert x-c\left(x,X_{3},X_{4}\right)\right\Vert }\right)\right)^{n-4}\right]<\infty,
\]
it is enough to show that 
\begin{equation}
\limsup_{z\rightarrow0}z^{-4}\mathbb{P}\left(\mu_{f}\left(B_{c\left(x,X_{1},X_{2}\right),\left\Vert x-c\left(x,X_{1},X_{2}\right)\right\Vert }\cup B_{c\left(x,X_{3},X_{4}\right),\left\Vert x-c\left(x,X_{3},X_{4}\right)\right\Vert }\right)\leq z\right)<\infty.\label{eq:decay estimate of prob involving E1,2 and E3,4}
\end{equation}
Suppose (\ref{eq:decay estimate of prob involving E1,2 and E3,4})
holds. Then, there exists $M>0$ and $\eta>0$, such that $\forall z\in\left(0,\eta\right)$,
\[
\mathbb{P}\left(\mu_{f}\left(B_{c\left(x,X_{1},X_{2}\right),\left\Vert x-c\left(x,X_{1},X_{2}\right)\right\Vert }\cup B_{c\left(x,X_{3},X_{4}\right),\left\Vert x-c\left(x,X_{3},X_{4}\right)\right\Vert }\right)\leq z\right)\leq z^{4}M.
\]
Therefore, we have that 
\[
\begin{split} & n^{4}\mathbb{E}\left[\left(1-\mu_{f}\left(B_{c\left(x,X_{1},X_{2}\right),\left\Vert x-c\left(x,X_{1},X_{2}\right)\right\Vert }\cup B_{c\left(x,X_{3},X_{4}\right),\left\Vert x-c\left(x,X_{3},X_{4}\right)\right\Vert }\right)\right)^{n-4}\right]\\
= & n^{4}\left(n-4\right)\int_{0}^{1}\left(1-z\right)^{n-5}\mathbb{P}\left(\mu_{f}\left(B_{c\left(x,X_{1},X_{2}\right),\left\Vert x-c\left(x,X_{1},X_{2}\right)\right\Vert }\cup B_{c\left(x,X_{3},X_{4}\right),\left\Vert x-c\left(x,X_{3},X_{4}\right)\right\Vert }\right)\leq z\right)dz\\
\leq & n^{4}\left(n-4\right)\int_{0}^{\eta}\left(1-z\right)^{n-5}\mathbb{P}\left(\mu_{f}\left(B_{c\left(x,X_{1},X_{2}\right),\left\Vert x-c\left(x,X_{1},X_{2}\right)\right\Vert }\cup B_{c\left(x,X_{3},X_{4}\right),\left\Vert x-c\left(x,X_{3},X_{4}\right)\right\Vert }\right)\leq z\right)dz\\
 & \hspace{10cm}+n^{4}\left(n-4\right)\int_{\eta}^{1}\left(1-z\right)^{n-5}dz\\
\leq & Mn^{4}\left(n-4\right)\int_{0}^{\eta}\left(1-z\right)^{n-5}z^{4}dz+n^{4}\left(1-\eta\right)^{n-4}\\
= & \frac{4!Mn^{3}}{\left(n-1\right)\left(n-2\right)\left(n-3\right)}+\mathcal{O}\left(n^{4}\left(1-\eta\right)^{n-4}\right),
\end{split}
\]
which is clearly bounded in $n$. Now we will focus on proving (\ref{eq:decay estimate of prob involving E1,2 and E3,4}).

By the generalized Lebesgue Differentiation Theorem (see, e.g., page
42 of \cite{Devroye2001}), there exists $\delta>0$ such that $\forall y\in B_{x,\delta}$,
\begin{equation}
\left|\frac{\mu_{f}\left(B_{x,\left\Vert y-x\right\Vert }\right)}{\lambda\left(B_{x,\left\Vert y-x\right\Vert }\right)}-f\left(x\right)\right|\leq\frac{1}{2}f\left(x\right)\text{ and }\left|\frac{\mu_{f}\left(B_{y,\left\Vert y-x\right\Vert }\right)}{\lambda\left(B_{y,\left\Vert y-x\right\Vert }\right)}-f\left(x\right)\right|\leq\frac{1}{2}f\left(x\right).\label{eq:Leb diff theorem in the controlling off centered balls}
\end{equation}
We claim that if $\left\Vert x-c\left(x,X_{1},X_{2}\right)\right\Vert >\frac{\delta}{2}$,
then it must be that 
\[
\mu_{f}\left(B_{c\left(x,X_{1},X_{2}\right),\left\Vert x-c\left(x,X_{1},X_{2}\right)\right\Vert }\right)\geq\frac{1}{8}f\left(x\right)\pi\delta^{2}.
\]
To see this, choose the point $c^{*}$ on the line segment connecting
$x$ and $c\left(x,X_{1},X_{2}\right)$ such that $\left\Vert x-c^{*}\right\Vert =\frac{\delta}{2}$.
Then, $B_{c^{*},\frac{\delta}{2}}\subseteq B_{c\left(x,X_{1},X_{2}\right),\left\Vert x-c\left(x,X_{1},X_{2}\right)\right\Vert }$
as shown in Figure 2.1.

\begin{figure}[H]
\includegraphics[scale=0.5]{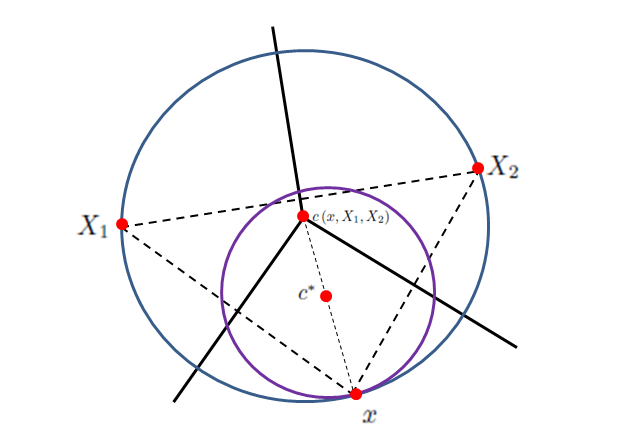}\caption{$A_{n}\left(x\right)$, $A_{n}\left(X_{1}\right)$ and $A_{n}\left(X_{2}\right)$
sharing a vertex $c\left(x,X_{1},X_{2}\right)$.}

\end{figure}
 By (\ref{eq:Leb diff theorem in the controlling off centered balls}),
we have that 
\[
\mu_{f}\left(B_{c\left(x,X_{1},X_{2}\right),\left\Vert x-c\left(x,X_{1},X_{2}\right)\right\Vert }\right)\geq\mu_{f}\left(B_{c^{*},\frac{\delta}{2}}\right)\geq\frac{1}{2}f\left(x\right)\lambda(B_{c^{*},\frac{\delta}{2}})=\frac{1}{8}f\left(x\right)\pi\delta^{2}.
\]
Meanwhile, it is clear that
\[
\frac{1}{2}\left\Vert x-X_{1}\right\Vert \leq\left\Vert x-c\left(x,X_{1},X_{2}\right)\right\Vert \text{ and }\frac{1}{2}\left\Vert x-X_{2}\right\Vert \leq\left\Vert x-c\left(x,X_{1},X_{2}\right)\right\Vert .
\]
Therefore, if 
\[
\mu_{f}\left(B_{c\left(x,X_{1},X_{2}\right),\left\Vert x-c\left(x,X_{1},X_{2}\right)\right\Vert }\right)<\frac{1}{8}f\left(x\right)\pi\delta^{2},
\]
then it means that
\[
\left\Vert x-c\left(x,X_{1},X_{2}\right)\right\Vert \leq\frac{\delta}{2}
\]
and hence 
\[
\left\Vert x-X_{1}\right\Vert \leq\delta\text{ and }\left\Vert x-X_{2}\right\Vert \leq\delta.
\]
Similarly, we also conclude that if
\[
\mu_{f}\left(B_{c\left(x,X_{3},X_{4}\right),\left\Vert x-c\left(x,X_{3},X_{4}\right)\right\Vert }\right)<\frac{1}{8}f\left(x\right)\pi\delta^{2},
\]
then
\[
\left\Vert x-c\left(x,X_{3},X_{4}\right)\right\Vert \leq\frac{\delta}{2},\;\left\Vert x-X_{3}\right\Vert \leq\delta\text{ and }\left\Vert x-X_{4}\right\Vert \leq\delta.
\]
Let $E$ be the set that
\[
\begin{array}{c}
\left\{ \left\Vert x-c\left(x,X_{1},X_{2}\right)\right\Vert <\frac{\delta}{2},\left\Vert x-X_{1}\right\Vert \leq\delta,\left\Vert x-X_{2}\right\Vert \leq\delta\right\} \hspace{2cm}\\
\hspace{2cm}\bigcap\left\{ \left\Vert x-c\left(x,X_{3},X_{4}\right)\right\Vert \leq\frac{\delta}{2},\left\Vert x-X_{3}\right\Vert \leq\delta,\left\Vert x-X_{4}\right\Vert \leq\delta\right\} .
\end{array}
\]

We observe that, by (\ref{eq:Leb diff theorem in the controlling off centered balls}),
for every $y\in B_{x,\delta}$, 
\[
\mu_{f}\left(B_{x,\left\Vert y-x\right\Vert }\right)\leq\frac{3}{2}f\left(x\right)\lambda\left(B_{x,\left\Vert y-x\right\Vert }\right)=\frac{3}{2}f\left(x\right)\lambda\left(B_{y,\left\Vert y-x\right\Vert }\right)\leq3\mu_{f}\left(B_{y,\left\Vert y-x\right\Vert }\right)
\]
and similarly,
\[
\mu_{f}\left(B_{x,\left\Vert y-x\right\Vert }\right)\leq\frac{3}{2}f\left(x\right)\lambda\left(B_{x,\left\Vert y-x\right\Vert }\right)=3\cdot2^{d-1}f\left(x\right)\lambda\left(B_{x,\frac{1}{2}\left\Vert y-x\right\Vert }\right)\leq3\cdot2^{d}\mu_{f}\left(B_{x,\frac{1}{2}\left\Vert y-x\right\Vert }\right).
\]
Then, we have that for all $z<\frac{1}{8}f\left(x\right)\pi\delta^{2}$,
\[
\begin{split} & z^{-4}\mathbb{P}\left(\mu_{f}\left(B_{c\left(x,X_{1},X_{2}\right),\left\Vert x-c\left(x,X_{1},X_{2}\right)\right\Vert }\cup B_{c\left(x,X_{3},X_{4}\right),\left\Vert x-c\left(x,X_{3},X_{4}\right)\right\Vert }\right)\leq z\right)\\
\leq & z^{-4}\mathbb{P}\left(\max\left\{ \mu_{f}\left(B_{c\left(x,X_{1},X_{2}\right),\left\Vert x-c\left(x,X_{1},X_{2}\right)\right\Vert }\right),\mu_{f}\left(B_{c\left(x,X_{3},X_{4}\right),\left\Vert x-c\left(x,X_{3},X_{4}\right)\right\Vert }\right)\right\} \leq z\right)\\
= & z^{-4}\mathbb{P}\left(\max\left\{ \mu_{f}\left(B_{c\left(x,X_{1},X_{2}\right),\left\Vert x-c\left(x,X_{1},X_{2}\right)\right\Vert }\right),\mu_{f}\left(B_{c\left(x,X_{3},X_{4}\right),\left\Vert x-c\left(x,X_{3},X_{4}\right)\right\Vert }\right)\right\} \leq z,\,E\right)\\
\leq & z^{-4}\mathbb{P}\left(\max\left\{ \mu_{f}\left(B_{x,\left\Vert x-c\left(x,X_{1},X_{2}\right)\right\Vert }\right),\mu_{f}\left(B_{x,\left\Vert x-c\left(x,X_{3},X_{4}\right)\right\Vert }\right)\right\} \leq3z,\,E\right)\\
\leq & z^{-4}\mathbb{P}\left(\max\left\{ \mu_{f}\left(B_{x,\frac{1}{2}\left\Vert x-X_{1}\right\Vert }\right),\mu_{f}\left(B_{x,\frac{1}{2}\left\Vert x-X_{2}\right\Vert }\right),\mu_{f}\left(B_{x,\frac{1}{2}\left\Vert x-X_{3}\right\Vert }\right),\mu_{f}\left(B_{x,\frac{1}{2}\left\Vert x-X_{4}\right\Vert }\right)\right\} \leq3z,\,E\right)\\
\leq & z^{-4}\mathbb{P}\left(\max\left\{ \mu_{f}\left(B_{x,\left\Vert x-X_{1}\right\Vert }\right),\mu_{f}\left(B_{x,\left\Vert x-X_{2}\right\Vert }\right),\mu_{f}\left(B_{x,\left\Vert x-X_{3}\right\Vert }\right),\mu_{f}\left(B_{x,\left\Vert x-X_{4}\right\Vert }\right)\right\} \leq9\cdot2^{d}z,\,E\right)\\
\leq & z^{-4}\mathbb{P}\left(\max\left\{ \mu_{f}\left(B_{x,\left\Vert x-X_{1}\right\Vert }\right),\mu_{f}\left(B_{x,\left\Vert x-X_{2}\right\Vert }\right),\mu_{f}\left(B_{x,\left\Vert x-X_{3}\right\Vert }\right),\mu_{f}\left(B_{x,\left\Vert x-X_{4}\right\Vert }\right)\right\} \leq9\cdot2^{d}z\right)\\
\leq & z^{-4}\mathbb{P}\left(\max\left\{ I_{1},I_{2},I_{3},I_{4}\right\} \leq9\cdot2^{d}z\right)\\
= & \left(9\cdot2^{d}\right)^{4},
\end{split}
\]
where $I_{1},I_{2},I_{3}$ and $I_{4}$ are i.i.d. random variables
with the uniform distribution on $\left[0,1\right]$. We conclude
that 
\[
\sup_{z\in\left(0,\frac{1}{8}f\left(x\right)\pi\delta^{2}\right)}z^{-4}\mathbb{P}\left(\mu_{f}\left(B_{c\left(x,X_{1},X_{2}\right),\left\Vert x-c\left(x,X_{1},X_{2}\right)\right\Vert }\cup B_{c\left(x,X_{3},X_{4}\right),\left\Vert x-c\left(x,X_{3},X_{4}\right)\right\Vert }\right)\leq z\right)\leq\left(9\cdot2^{d}\right)^{4}
\]
which gives the desired result (\ref{eq:decay estimate of prob involving E1,2 and E3,4}).
\end{proof}

\section{Voronoi Cells That Contain a Fixed Point}

We now shift our focus to the consideration of the Voronoi cell, denoted
by $L_{n}\left(x\right)$, that contains the fixed point $x\in\mathbb{R}^{d}$
in the Voronoi diagram generated by $\left\{ X_{1},\cdots,X_{n}\right\} $
for every $n\geq1$. In other words, $x$ will almost surely never
be the nucleus of $L_{n}\left(x\right)$ for any $n\geq1$, and $L_{n}\left(x\right)$'s
nucleus may vary as $n$ varies. In general, we expect $L_{n}\left(x\right)$
to behave similarly to $A_{n}\left(x\right)$, but to ``tend'' to
have larger measure under $\mu_{f}$. Heuristically speaking, by requiring
the cell to contain a fixed point, we are in some sense biasing our
selection towards larger cells. We begin our study by giving an estimate
on $D_{n}^{L}\left(x\right)$, which we recall is the diameter of
$L_{n}\left(x\right)$, that mirrors the result for $A_{n}\left(x\right)$
obtained in Theorem 5.1 of \cite{Devroye2017}.\\

\noindent \textbf{Theorem \ref{thm:diameter control on L_n(x), first result in section 3}.}
\emph{Let $f$ be a probability density function on $\mathbb{R}^{d}$
and $x$ be a Lebesgue point of $f$ such that $f\left(x\right)>0$.
Then, there exist universal constants $c_{1},c_{2}>0$ such that $\forall t>0$,
\[
\limsup\limits _{n\rightarrow\infty}\mathbb{P}\left(D_{n}^{L}\left(x\right)\geq\frac{t}{n^{\frac{1}{d}}}\right)\leq c_{1}e^{-c_{2}f\left(x\right)t^{d}}.
\]
}

Before giving the proof of Theorem \ref{thm:diameter control on L_n(x), first result in section 3},
we will first state a technical lemma whose proof is left in the Appendix.\\

\noindent \textbf{Lemma \ref{lem:Appendix cone arguments on range of cell}
}(in the Appendix). Let $\alpha>0$, $x\in\mathbb{R}^{d}$, and $C\subseteq\mathbb{R}^{d}$
be any cone of angle $\frac{\pi}{12}$ and with origin at $x$ ($C$
does not contain $x$), i.e., 
\[
C:=\left\{ y\in\mathbb{R}^{d}\backslash\left\{ x\right\} :\frac{\left(v,y-x\right)_{\mathbb{R}^{d}}}{\left\Vert y-x\right\Vert }\geq\cos\left(\frac{\pi}{24}\right)\right\} \text{, for some }v\in\mathbb{R}^{d}\text{ with }\left\Vert v\right\Vert =1.
\]
Let $R_{1}=\frac{1}{64}\alpha$, $R_{2}=\frac{1+31\cos\left(\frac{\pi}{6}\right)}{64\cos\left(\frac{\pi}{12}\right)}\alpha$,
and $R_{3}=\frac{30}{64}\alpha$. Then, for any $p,y,z\in C$, if
\[
0<\left\Vert y-x\right\Vert <R_{1},\ R_{2}\leq\left\Vert p-x\right\Vert <R_{3},\text{ and }\left\Vert x-z\right\Vert \geq\frac{\alpha}{2},
\]
then we must have 
\[
\left\Vert z-p\right\Vert <\left\Vert z-y\right\Vert .
\]
\begin{proof}
\noindent \textit{(Proof of Theorem \ref{thm:diameter control on L_n(x), first result in section 3}.)}
Let $t>0$ and $C_{1},\cdots,C_{\gamma_{d}}$ be a minimal set of
cones of angle $\frac{\pi}{12}$ and with origin at $x$ such that
their union covers $\mathbb{R}^{d}$. For every $i\in\left\{ 1,\cdots,\gamma_{d}\right\} $
and $n\geq1$ define the following three sections of $C_{i}$: 
\[
C_{i}^{1,n}:=\left\{ z\in C_{i}:\left\Vert z-x\right\Vert <R_{1,n}\right\} ,
\]
\[
C_{i}^{2,n}:=\left\{ z\in C_{i}:R_{1,n}\leq\left\Vert z-x\right\Vert <R_{2,n}\right\} 
\]
and
\[
C_{i}^{3,n}:=\left\{ z\in C_{i}:R_{2,n}\leq\left\Vert z-x\right\Vert <R_{3,n}\right\} ,
\]
where $R_{1,n}$, $R_{2,n}$ and $R_{3,n}$ are respectively $R_{1}$,
$R_{2}$ and $R_{3}$ defined as in Lemma \ref{lem:Appendix cone arguments on range of cell}
with $\alpha:=\frac{t}{n^{\frac{1}{d}}}$.

Now, for every $n\geq1$, suppose that for every $i\in\left\{ 1,\cdots,\gamma_{d}\right\} $,
$\exists p_{i}\in\left\{ X_{1},\cdots,X_{n}\right\} \cap C_{i}^{3,n}$.
Let $y$ denote the nucleus of $L_{n}\left(x\right)$ and assume that
$\left\Vert y-x\right\Vert <R_{1,n}$. We claim that $L_{n}\left(x\right)\subseteq B_{x,\frac{t}{2n^{\frac{1}{d}}}}$
and hence $D_{n}^{L}\left(x\right)<\frac{t}{n^{\frac{1}{d}}}$. To
see this, let $z\in\mathbb{R}^{d}$ be such that $\left\Vert z-x\right\Vert \geq\frac{t}{2n^{\frac{1}{d}}}$.
Let $y^{\prime}$ be the point on the line segment from $x$ to $z$
such that $\left\Vert y-x\right\Vert =\left\Vert y^{\prime}-x\right\Vert $
and $i_{0}\in\left\{ 1,\cdots,\gamma_{d}\right\} $ be such that $z\in C_{i_{0}}$.
We clearly have $\left\Vert y-z\right\Vert \geq\left\Vert y^{\prime}-z\right\Vert $.
Moreover, Lemma \ref{lem:Appendix cone arguments on range of cell}
immediately gives that $\left\Vert z-p_{i_{0}}\right\Vert <\left\Vert z-y^{\prime}\right\Vert $.
Hence, $\left\Vert z-p_{i_{0}}\right\Vert <\left\Vert z-y\right\Vert \implies z\notin L_{n}\left(x\right)$,
as desired. We conclude that $\forall n\geq1$, 
\[
\mathbb{P}\left(D_{n}^{L}\left(x\right)\geq\frac{t}{n^{\frac{1}{d}}}\right)\leq\mathbb{P}\left(\left\Vert y-x\right\Vert \geq R_{1,n}\text{ or }\exists\ i\in\left\{ 1,\cdots,\gamma_{d}\right\} \text{ such that }C_{i}^{3,n}\cap\left\{ X_{1},\cdots,X_{n}\right\} =\emptyset\right).
\]
Hence it is enough to control
\[
\limsup_{n\rightarrow\infty}\mathbb{P}\left(\left\Vert y-x\right\Vert \geq R_{1,n}\text{ or }\exists\ i\in\left\{ 1,\cdots,\gamma_{d}\right\} \text{ such that }C_{i}^{3,n}\cap\left\{ X_{1},\cdots,X_{n}\right\} =\emptyset\right).
\]

By the generalized Lebesgue Differentiation Theorem, we have that
for every $i\in\left\{ 1,\cdots,\gamma_{d}\right\} $ and all $n$
sufficiently large, 
\[
\left|\frac{\mu_{f}\left(C_{i}^{3,n}\right)}{\lambda\left(C_{i}^{3,n}\right)}-f\left(x\right)\right|\leq\frac{1}{2}f\left(x\right)\text{ and }\left|\frac{\mu_{f}\left(B_{x,R_{1,n}}\right)}{\lambda\left(B_{x,R_{1,n}}\right)}-f\left(x\right)\right|\leq\frac{1}{2}f\left(x\right).
\]
It follows that
\[
\begin{split} & \mathbb{P}\left(\left\Vert y-x\right\Vert \geq R_{1,n}\text{ or }\exists\ i\in\left\{ 1,\cdots,\gamma_{d}\right\} \text{ such that }C_{i}^{3,n}\cap\left\{ X_{1},\cdots,X_{n}\right\} =\emptyset\right)\\
\leq & \sum_{i=1}^{\gamma_{d}}\mathbb{P}\left(C_{i}^{3,n}\cap\left\{ X_{1},\cdots,X_{n}\right\} =\emptyset\right)+\mathbb{P}\left(\left\Vert y-x\right\Vert \geq R_{1,n}\right)\\
= & \sum_{i=1}^{\gamma_{d}}\left(1-\mu_{f}\left(C_{i}^{3,n}\right)\right)^{n}+\left(1-\mu_{f}\left(B_{x,R_{1,n}}\right)\right)^{n}\\
\leq & \gamma_{d}\left(1-\frac{1}{2}\lambda\left(C_{i}^{3,n}\right)f\left(x\right)\right)^{n}+\left(1-\frac{1}{2}\lambda\left(B_{x,R_{1,n}}\right)f\left(x\right)\right)^{n}\\
\leq & c_{1}\left(1-\frac{c_{2}t^{d}f\left(x\right)}{n}\right)^{n}\rightarrow c_{1}e^{-c_{2}t^{d}f\left(x\right)}\text{ as }n\rightarrow\infty,
\end{split}
\]
where $c_{1}$ and $c_{2}$ are two universal positive constants. 
\end{proof}
\noindent We now examine the relationship between $f$ and $\mu_{f}\left(L_{n}\left(x\right)\right)$
and establish Theorem \ref{thm:limiting distribution of n*L_n(x), second result in section 3},
which states that under the same conditions as the ones we have been
imposing on $x$ (i.e., $x$ is a Lebesgue point of $f$ and $f\left(x\right)>0$),
$n\mu_{f}\left(L_{n}\left(x\right)\right)$ converges in distribution
to a random variable whose distribution is universal for all choices
of $f$. We provide a complete characterization of this limiting distribution
in terms of its moments and show that these moments determines a unique
distribution. Furthermore, since the limiting distribution is independent
of $f$, we are able to obtain information about this limiting distribution
by numerically simulating data from the special case where $f$ is
the probability density function of the uniform distribution on $\left[-1,1\right]^{2}$.
A histogram estimate of the density of the limiting distribution is
shown (in copper) in Figure 3.1. For the purpose of comparison, we
also simulate data to estimate the probability density function of
the limiting distribution of $n\mu_{f}\left(A_{n}\left(x\right)\right)$
(i.e., the case when $x$ is the nucleus of the cell) derived in \cite{Devroye2017}
and also place the histogram (in blue) in Figure 3.1. We see that
as expected, the limiting distribution of $n\mu_{f}\left(L_{n}\left(x\right)\right)$
gives higher probabilities to larger values than the comparative distribution
for $n\mu_{f}\left(A_{n}\left(x\right)\right)$.
\noindent \begin{center}
\begin{figure}[H]
\includegraphics[scale=0.22]{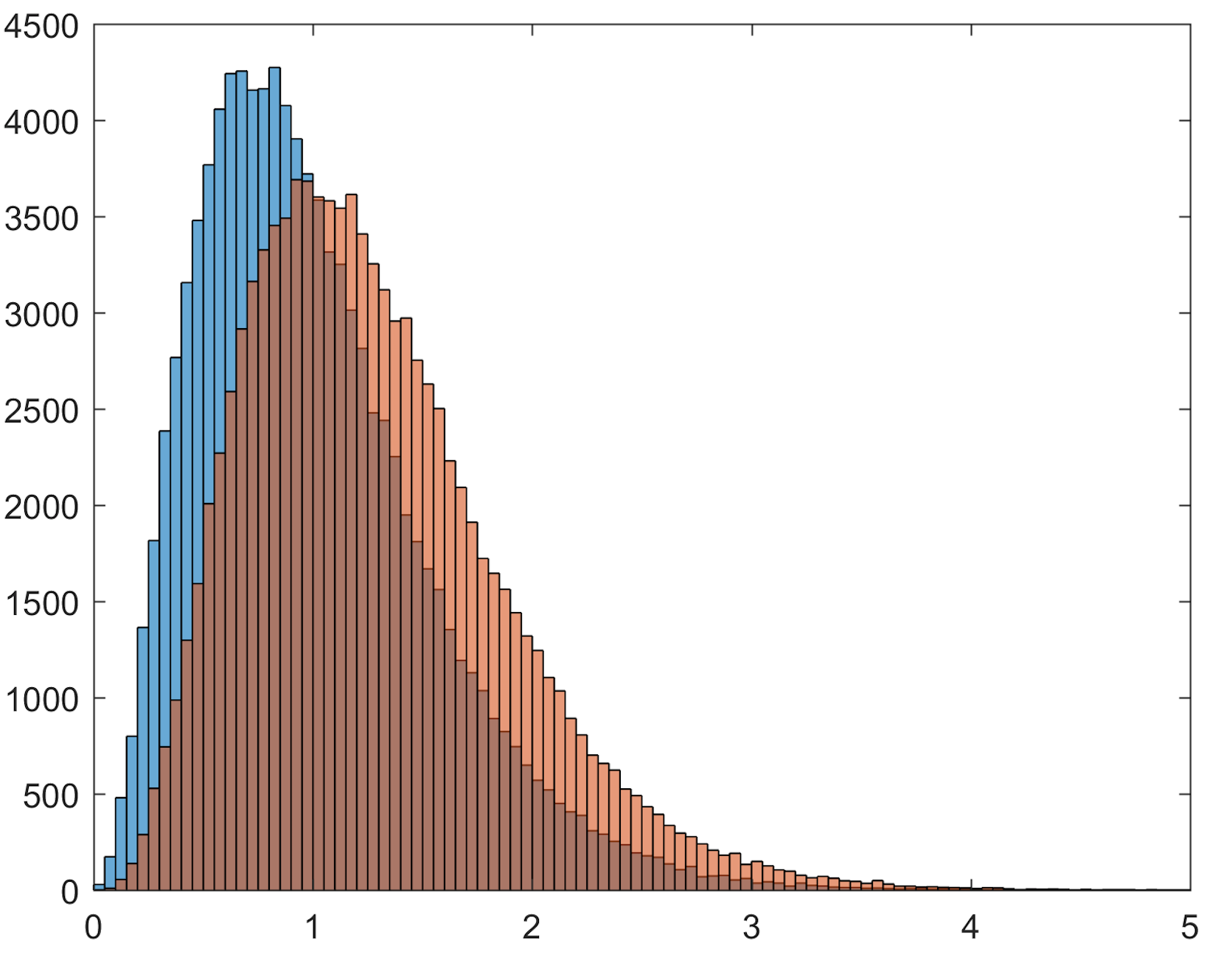}\caption{\textbf{\footnotesize{}Histogram of the density estimate of the limiting
distribution of $n\mu_{f}\left(L_{n}\left(x\right)\right)$ (copper)
and $n\mu_{f}\left(A_{n}\left(x\right)\right)$ (blue).}}
\noindent \raggedright{}{\footnotesize{}One thousand samples of the
Voronoi diagram arising from the point process $\left\{ X_{1},\cdots,X_{1000}\right\} $
that is composed of i.i.d. random variables with the uniform distribution
on $\left[-1,1\right]^{2}$ were taken. $n\mu_{f}\left(L_{n}\left(0\right)\right)$
and $n\mu_{f}\left(A_{n}\left(0\right)\right)$ were calculated for
each trial and the resulting data was grouped together into bins of
width 0.05. The $x$-axis indicates the observed values for $n\mu_{f}\left(L_{n}\left(0\right)\right)$
and $n\mu_{f}\left(A_{n}\left(0\right)\right)$ while the $y$-axis
shows the number of occurrences of values in each bin.} {\footnotesize{}The
authors are grateful to Jean-Christophe Nave for performing the simulations
of the two limiting distributions and generating the histogram figures.}{\footnotesize \par}
\end{figure}
\par\end{center}

Below let us state Theorem \ref{thm:limiting distribution of n*L_n(x), second result in section 3}
once again.\\

\noindent \textbf{Theorem \ref{thm:limiting distribution of n*L_n(x), second result in section 3}.}
\emph{For every positive integer $k\geq1$, let $W$ be a Bernoulli$\left(\frac{k}{k+1}\right)$
random variable and $U_{1},\cdots,U_{k}$ be i.i.d. random variables
with the uniform distribution on $B_{0,1}$ that are independent of
$W$. Set $\bar{1}:=\left(1,0,\cdots,0\right)\in\mathbb{R}^{d}$,
and define the random variable }
\[
\begin{split}D_{k}:= & \frac{\lambda\left(B_{U_{1},||\bar{1}-U_{1}||}\cup\dots\cup B_{U_{k},||\bar{1}-U_{k}||}\cup B_{0,1}\right)}{\lambda\left(B_{0,1}\right)}\mathbb{I}_{\left\{ W=0\right\} }\\
 & +\frac{\lambda\left(B_{\bar{1},||\bar{1}-U_{1}||}\cup B_{U_{2},||U_{1}-U_{2}||}\cup B_{U_{3},||U_{1}-U_{3}||}\cup\dots\cup B_{U_{k},||U_{1}-U_{k}||}\cup B_{0,||U_{1}||}\right)}{\lambda\left(B_{0,1}\right)}\mathbb{I}_{\left\{ W=1\right\} }.
\end{split}
\]
\emph{Let $f$ be any probability density function on $\mathbb{R}^{d}$
and $x$ be a Lebesgue point of $f$ such that $f\left(x\right)>0$.
Then, 
\[
\lim\limits _{n\rightarrow\infty}\mathbb{E}\left[n^{k}\mu_{f}\left(L_{n}\left(x\right)\right)^{k}\right]=\mathbb{E}\left[\frac{(k+1)!}{D_{k}^{k+1}}\right],\ \forall k\geq1.
\]
Moreover, these limits of the moments uniquely determine a distribution
$\mathscr{D}$ on $\mathbb{R}^{+}$ with the property that $\mathscr{D}$
does not depend on the choice of $f$ or $x$, and the distribution
of $n\mu_{f}\left(L_{n}\left(x\right)\right)$ weakly converges to
$\mathscr{D}$ as $n\rightarrow\infty$. }
\begin{proof}
We only give a detailed proof for the case $k=1$. Higher moments
can be dealt with using similar methods without any additional steps.
We split the proof into five main parts.\\

\noindent \textbf{Step 1. Reduce estimating $\mathbb{E}\left[n\mu_{f}\left(L_{n}\left(x\right)\right)\right]$
to estimating a tail probability:} For every $n\geq1$, let $X_{n+1}$
be a random variable with probability density function $f$, $X_{n+1}$
be independent of $X_{1},\cdots,X_{n}$, and $y$ be the nucleus of
$L_{n}\left(x\right)$. We have that 
\[
\begin{split}\mathbb{E}\left[\mu_{f}\left(L_{n}\left(x\right)\right)\right] & =\mathbb{P}\left(X_{n+1}\in L_{n}\left(x\right)\right)\\
 & =\mathbb{P}\left(\left\Vert X_{n+1}-y\right\Vert \leq\left\Vert X_{n+1}-X_{i}\right\Vert \text{ for every }i\in\left\{ 1,\cdots,n\right\} \text{ such that }X_{i}\neq y\right)\\
 & =\mathbb{P}\left(X_{i}\notin B_{X_{n+1},\left\Vert X_{n+1}-y\right\Vert }\text{ for every }i\in\left\{ 1,\cdots,n\right\} \text{ such that }X_{i}\neq y\right)\\
 & =n\mathbb{P}\left(X_{1}=y,\:X_{i}\notin B_{X_{n+1},\left\Vert X_{n+1}-X_{1}\right\Vert }\text{ for every }i\in\left\{ 2,\cdots,n\right\} \right)\\
 & =n\mathbb{P}\left(X_{i}\notin B_{X_{n+1},\left\Vert X_{n+1}-X_{1}\right\Vert }\cup B_{x,\left\Vert x-X_{1}\right\Vert }\text{ for every }i\in\left\{ 2,\cdots,n\right\} \right)\\
 & =n\mathbb{E}\left[\left(1-\mu_{f}\left(B_{X_{n+1},\left\Vert X_{n+1}-X_{1}\right\Vert }\cup B_{x,\left\Vert x-X_{1}\right\Vert }\right)\right)^{n-1}\right].
\end{split}
\]
Now, using arguments that are identical to those employed in the derivation
of (\ref{eq:decay estimate of prob involving E1,2 and E3,4}) in the
proof of Proposition \ref{prop:result on bddness of second moment of number of edges},
we know that in order to prove that 
\begin{equation}
\lim_{n\rightarrow\infty}n^{2}\mathbb{E}\left[\left(1-\mu_{f}\left(B_{X_{n+1},\left\Vert X_{n+1}-X_{1}\right\Vert }\cup B_{x,\left\Vert x-X_{1}\right\Vert }\right)\right)^{n-1}\right]=\mathbb{E}\left[\frac{2}{D_{1}^{2}}\right],\label{eq:theorem result for 1st moment}
\end{equation}
it is sufficient to prove that 
\begin{equation}
\lim_{z\rightarrow0}z^{-2}\mathbb{P}\left(\mu_{f}\left(B_{X_{n+1},\left\Vert X_{n+1}-X_{1}\right\Vert }\cup B_{x,\left\Vert x-X_{1}\right\Vert }\right)\leq z\right)=\mathbb{E}\left[\frac{1}{D_{1}^{2}}\right].\label{eq:prob to be estimated step 1}
\end{equation}
\textbf{Step 2. Simplify the probability to be estimated:} For every
$z>0$, we rewrite the concerned probability in (\ref{eq:prob to be estimated step 1})
as 
\begin{equation}
\mathbb{P}\left(\frac{\mu_{f}\left(B_{X_{n+1},\left\Vert X_{n+1}-X_{1}\right\Vert }\cup B_{x,\left\Vert x-X_{1}\right\Vert }\right)}{\max\left\{ \mu_{f}\left(B_{x,\left\Vert X_{n+1}-x\right\Vert }\right),\mu_{f}\left(B_{x,\left\Vert x-X_{1}\right\Vert }\right)\right\} }\cdot\max\left\{ \mu_{f}\left(B_{x,\left\Vert X_{n+1}-x\right\Vert }\right),\mu_{f}\left(B_{x,\left\Vert x-X_{1}\right\Vert }\right)\right\} \leq z\right).\label{eq:rewriting prob to be estimated step 2}
\end{equation}
Now, let $I_{1}$ and $I_{2}$ be i.i.d. random variables with the
uniform distribution on $\left[0,1\right]$, and $I_{1}$, $I_{2}$
be independent of $D_{1}$. Recall that
\[
\left(\mu_{f}\left(B_{x,\left\Vert X_{n+1}-x\right\Vert }\right),\mu_{f}\left(B_{x,\left\Vert x-X_{1}\right\Vert }\right)\right)=\left(I_{1},I_{2}\right)\text{ in distribution.}
\]
Then, the rest of the proof is dedicated to showing that, for all
$z$ sufficiently small, (\ref{eq:rewriting prob to be estimated step 2})
is well approximated by
\begin{equation}
\mathbb{P}\left(D_{1}\cdot\max\left\{ I_{1},I{}_{2}\right\} \leq z\right)=z^{2}\mathbb{E}\left[\frac{1}{D_{1}^{2}}\right]\label{eq:equiv prob in Step 2}
\end{equation}
which leads to (\ref{eq:prob to be estimated step 1}). This task
will be carried out by a coupling technique combined with geometric
arguments. \\

\noindent \textbf{Step 3. Introduce the coupling:} In fact, the coupling
technique we will adopt here is identical to the one used in \cite{Devroye2017}.
In \cite{Devroye2017}, the coupling method was used to determine
the limit of the second moment of $n\mu_{f}\left(A_{n}\left(x\right)\right)$
in the case where the Voronoi cells contain a fixed nucleus $x$;
we adapt that method to the setting where the Voronoi cells contain
a fixed point $x$, and study the limit of the first moment of $n\mu_{f}\left(L_{n}\left(x\right)\right)$.
We take $W$ (as in the definition of $D_{1}$) to be $\mathbb{I}_{\left\{ \left\Vert x-X_{1}\right\Vert \leq\left\Vert x-X_{n+1}\right\Vert \right\} }$,
which is obviously a Bernoulli$\left(\frac{1}{2}\right)$ random variable.
Define $Y_{1}$ and $Y_{2}$ to be the reordering of $X_{1}$ and
$X_{n+1}$ such that $\left\Vert x-Y_{1}\right\Vert \leq\left\Vert x-Y_{2}\right\Vert $.
Clearly, $W$ is independent of $Y_{2}$. Conditioning on $Y_{2}$,
let $V_{1}$ be a random variable that has the uniform distribution
on $B_{x,\left\Vert x-Y_{2}\right\Vert }$ and such that $V_{1}$
is maximally coupled with $Y_{1}$ in the sense that 
\[
\mathbb{P}\left(Y_{1}\neq V_{1}|Y_{2}\right)=\frac{1}{2}\int_{B_{x,\left\Vert x-Y_{2}\right\Vert }}\left|\frac{f\left(u\right)}{\mu_{f}\left(B_{x,\left\Vert x-Y_{2}\right\Vert }\right)}-\frac{1}{\lambda\left(B_{x,\left\Vert x-Y_{2}\right\Vert }\right)}\right|du.
\]
Additionally we define 
\[
\left(V,V^{\prime}\right):=\begin{cases}
\left(V_{1},Y_{2}\right), & \text{if }W=1,\\
\left(Y_{2},V_{1}\right), & \text{if }W=0.
\end{cases}
\]
We would like to argue that $\left(V,V^{\prime}\right)$ approximates
$\left(X_{1},X_{n+1}\right)$ well. To this end, we will again invoke
Lemma \ref{lem:Appendix Lebesgue density theorem} in the Appendix.
Namely, let $\epsilon>0$ be arbitrary and $\delta$ be as in Lemma
\ref{lem:Appendix Lebesgue density theorem}. By the choice of $\left(V,V^{\prime}\right)$
and $V_{1}$, we have that
\begin{equation}
\begin{split} & \sup_{\eta\in\left[0,\delta\right]}\mathbb{P}\left(\left.\left(V,V^{\prime}\right)\neq\left(X_{1},X_{n+1}\right)\right|\left\Vert x-Y_{2}\right\Vert =\eta\right)\\
= & \sup_{\eta\in\left[0,\delta\right]}\mathbb{P}\left(\left.Y_{1}\neq V_{1}\right|\left\Vert x-Y_{2}\right\Vert =\eta\right)\\
= & \frac{1}{2}\sup_{\eta\in\left[0,\delta\right]}\int_{B_{x,\eta}}\left|\frac{f\left(u\right)}{\mu_{f}\left(B_{x,\eta}\right)}-\frac{1}{\lambda\left(B_{x,\eta}\right)}\right|du\leq\epsilon.
\end{split}
\label{eq:estimate of P((V,V') neq (X1,Xn+1)}
\end{equation}

On the other hand, with the new notations, we see that the first factor
involved in the random variable in (\ref{eq:rewriting prob to be estimated step 2})
can be written as 
\[
\begin{split}\frac{\mu_{f}\left(B_{X_{n+1},\left\Vert X_{n+1}-X_{1}\right\Vert }\cup B_{x,\left\Vert x-X_{1}\right\Vert }\right)}{\max\left\{ \mu_{f}\left(B_{x,\left\Vert X_{n+1}-x\right\Vert }\right),\mu_{f}\left(B_{x,\left\Vert x-X_{1}\right\Vert }\right)\right\} } & =\frac{\mu_{f}\left(B_{X_{n+1},\left\Vert X_{n+1}-X_{1}\right\Vert }\cup B_{x,\left\Vert x-X_{1}\right\Vert }\right)}{\mu_{f}\left(B_{x,\left\Vert x-Y_{2}\right\Vert }\right)}.\end{split}
\]
Instead of treating the ratio of the $\mu_{f}-$measures of the two
sets involved in the right hand side above, we first look at the corresponding
ratio of replacing ``$\mu_{f}$'' by ``$\lambda$'', i.e., 
\begin{equation}
\frac{\lambda\left(B_{X_{n+1},\left\Vert X_{n+1}-X_{1}\right\Vert }\cup B_{x,\left\Vert x-X_{1}\right\Vert }\right)}{\lambda\left(B_{x,\left\Vert x-Y_{2}\right\Vert }\right)}.\label{eq:lambda measure ratio}
\end{equation}
Assuming $\left(V,V^{\prime}\right)=\left(X_{1},X_{n+1}\right)$,
we have that 
\[
\left(X_{1},X_{n+1}\right)=\begin{cases}
\left(V_{1},Y_{2}\right) & \text{if }W=1,\\
\left(Y_{2},V_{1}\right) & \text{if }W=0,
\end{cases}
\]
and hence the ratio concerned in (\ref{eq:lambda measure ratio})
becomes 
\begin{equation}
\tilde{D}:=\frac{\lambda\left(B_{V_{1},\left\Vert V_{1}-Y_{2}\right\Vert }\cup B_{x,\left\Vert x-Y_{2}\right\Vert }\right)}{\lambda\left(B_{x,\left\Vert x-Y_{2}\right\Vert }\right)}\mathbb{I}_{\left\{ W=0\right\} }+\frac{\lambda\left(B_{Y_{2},\left\Vert Y_{2}-V_{1}\right\Vert }\cup B_{x,\left\Vert x-V_{1}\right\Vert }\right)}{\lambda\left(B_{x,\left\Vert x-Y_{2}\right\Vert }\right)}\mathbb{I}_{\left\{ W=1\right\} }.\label{eq:expression of D_1 Step 3}
\end{equation}

The next fact we will establish is that, when conditioning on $Y_{2}$,
$\tilde{D}$ has the same distribution as $D_{1}$, as defined in
the statement of Theorem \ref{thm:limiting distribution of n*L_n(x), second result in section 3}.
To see this, define $Z:=U_{1}\left\Vert x-Y_{2}\right\Vert +x$ where
$U_{1}$ is a random variable with the uniform distribution on $B_{0,1}$
and independent of $Y_{2}$, so that conditioning on $Y_{2}$, $Z$
has the uniform distribution on $B_{x,\left\Vert x-Y_{2}\right\Vert }$
which is the same as the distribution of $V_{1}$. Recall that $\bar{1}:=\left(1,0,\cdots,0\right)$.
Then, one observes that given $Y_{2}$, 
\[
\begin{split}\tilde{D}\;\stackrel{\text{in dist.}}{=} & \frac{\lambda\left(B_{Z,\left\Vert Y_{2}-Z\right\Vert }\cup B_{x,\left\Vert x-Y_{2}\right\Vert }\right)}{\lambda\left(B_{x,\left\Vert x-Y_{2}\right\Vert }\right)}\mathbb{I}_{\left\{ W=0\right\} }+\frac{\lambda\left(B_{Y_{2},\left\Vert Y_{2}-Z\right\Vert }\cup B_{x,\left\Vert Z-x\right\Vert }\right)}{\lambda\left(B_{x,\left\Vert x-Y_{2}\right\Vert }\right)}\mathbb{I}_{\left\{ W=1\right\} }\\
=\quad & \frac{\lambda\left(B_{U_{1},\left\Vert \bar{1}-U_{1}\right\Vert }\cup B_{0,1}\right)}{\lambda\left(B_{0,1}\right)}\mathbb{I}_{\left\{ W=0\right\} }+\frac{\lambda\left(B_{\bar{1},\left\Vert \bar{1}-U_{1}\right\Vert }\cup B_{0,\left\Vert U_{1}\right\Vert }\right)}{\lambda\left(B_{0,1}\right)}\mathbb{I}_{\left\{ W=1\right\} }\\
=\quad & D_{1},
\end{split}
\]
where the sets concerned in the second line are just the shifted,
re-scaled and rotated versions of those in the first line. Moreover,
it is also clear from this derivation that given $Y_{2}$, the distribution
of $\tilde{D}$ as defined in (\ref{eq:expression of D_1 Step 3})
does not depend on the specific value of $Y_{2}$. The following Figure
3.2 illustrates the sets concerned in the definition of $\tilde{D}$
and $D_{1}$ in the case when $d=2$. 

\begin{figure}[H]
\includegraphics[scale=0.45]{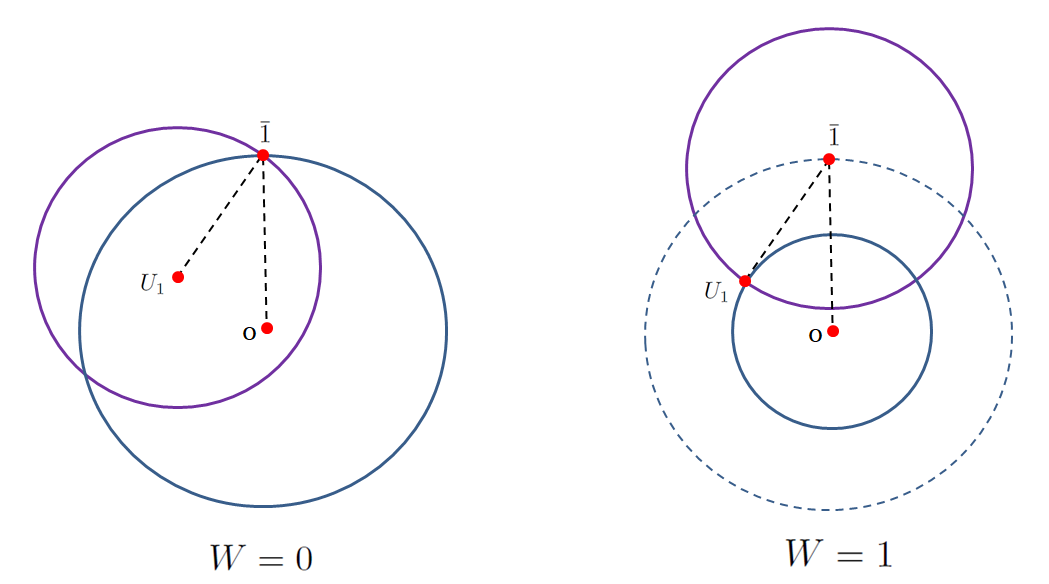}

\caption{The sets concerned in $D_{1}$ (and equivalently $\tilde{D}$) when
$W=0$ or $W=1$ in 2D.}

\end{figure}

\noindent \textbf{Step 4. Establish the probability estimate.} By
the generalized Lebesgue Differentiation Theorem, one can make $\delta>0$
even smaller if necessary such that for all balls $B_{p,r}\subseteq B_{x,\delta}$
with $r\geq\frac{\delta}{4}$, 
\[
\left|\frac{\mu_{f}\left(B_{p,r}\right)}{\lambda\left(B_{p,r}\right)}-f\left(x\right)\right|\leq\frac{f\left(x\right)}{2}.
\]
Consider the event 
\[
\left\{ \left\Vert x-X_{1}\right\Vert >\frac{\delta}{4}\right\} \cup\left\{ \left\Vert x-X_{1}\right\Vert \leq\frac{\delta}{4},\left\Vert x-X_{n+1}\right\Vert >\delta\right\} .
\]
We note that if $\left\Vert x-X_{1}\right\Vert >\frac{\delta}{4}$,
then 
\[
\mu_{f}\left(B_{x,\left\Vert x-X_{1}\right\Vert }\right)\geq\mu_{f}\left(B_{x,\frac{\delta}{4}}\right)\geq\frac{f\left(x\right)}{2}\lambda\left(B_{x,\frac{\delta}{4}}\right);
\]
 if $\left\Vert x-X_{1}\right\Vert \leq\frac{\delta}{4}$ and $\left\Vert x-X_{n+1}\right\Vert >\delta$,
then 
\[
\mu_{f}\left(B_{X_{n+1},\left\Vert X_{1}-X_{n+1}\right\Vert }\right)\geq\mu_{f}\left(B_{p^{*},\frac{\delta}{4}}\right)\geq\frac{f(x)}{2}\lambda\left(B_{p^{*},\frac{\delta}{4}}\right)
\]
where $p^{*}$ is the point on the line segment connecting $X_{1}$
and $X_{n+1}$ such that $\left\Vert X_{1}-p^{*}\right\Vert =\frac{\delta}{4}$.
In particular, we conclude that for every $z\in\left(0,\frac{f\left(x\right)}{2}\lambda\left(B_{0,\frac{\delta}{4}}\right)\right)$,
if 
\[
\mu_{f}\left(B_{X_{n+1},\left\Vert X_{n+1}-X_{1}\right\Vert }\cup B_{x,\left\Vert x-X_{1}\right\Vert }\right)\leq z,
\]
then 
\[
\max\left\{ \mu_{f}\left(B_{X_{n+1},\left\Vert X_{n+1}-X_{1}\right\Vert }\right),\mu_{f}\left(B_{x,\left\Vert x-X_{1}\right\Vert }\right)\right\} \leq z
\]
and hence it must be that
\[
\left\Vert x-X_{1}\right\Vert \leq\frac{\delta}{4}\text{ and }\left\Vert x-X_{n+1}\right\Vert \leq\delta,
\]
which implies that $\left\Vert x-Y_{2}\right\Vert \leq\delta$. Thus,
for all $z$ sufficiently small,
\begin{equation}
\begin{split} & \mathbb{P}\left(\mu_{f}\left(B_{X_{n+1},\left\Vert X_{n+1}-X_{1}\right\Vert }\cup B_{x,\left\Vert x-X_{1}\right\Vert }\right)\leq z\right)\\
= & \mathbb{P}\left(\mu_{f}\left(B_{X_{n+1},\left\Vert X_{n+1}-X_{1}\right\Vert }\cup B_{x,\left\Vert x-X_{1}\right\Vert }\right)\leq z,\left\Vert x-Y_{2}\right\Vert \leq\delta,\left(V,V^{\prime}\right)\neq\left(X_{1},X_{n+1}\right)\right)\\
 & \hspace{1cm}+\mathbb{P}\left(\mu_{f}\left(B_{X_{n+1},\left\Vert X_{n+1}-X_{1}\right\Vert }\cup B_{x,\left\Vert x-X_{1}\right\Vert }\right)\leq z,\left\Vert x-Y_{2}\right\Vert \leq\delta,\left(V,V^{\prime}\right)=\left(X_{1},X_{n+1}\right)\right),
\end{split}
\label{eq:prob estimate (I)+(II) in Step 4}
\end{equation}
and we will treat the two terms on the right hand side of (\ref{eq:prob estimate (I)+(II) in Step 4})
separately.

For every bounded Borel set $B\subseteq\mathbb{R}^{d}$, let $B^{*}$
denote the smallest ball centered at $x$ containing $B$. Let $\epsilon>0$
be chosen as in Step 3. By applying the generalized Lebesgue Differentiation
Theorem again, we can make $\delta$ even smaller such that, for all
bounded Borel sets $B\subseteq\mathbb{R}^{d}$ with $\frac{\lambda(B^{*})}{\lambda(B)}\leq6^{d}$
and $\lambda(B)\leq\left(3\delta\right)^{d}$, we have that
\[
\left|\frac{\mu_{f}\left(B\right)}{\lambda\left(B\right)}-f\left(x\right)\right|\leq\epsilon f\left(x\right).
\]
On one hand, for every $x^{\prime}\in B_{X_{n+1},\left\Vert X_{n+1}-X_{1}\right\Vert }\cup B_{x,\left\Vert x-X_{1}\right\Vert }$,
\[
\left\Vert x-x^{\prime}\right\Vert \leq\max\left\{ \left\Vert X_{n+1}-X_{1}\right\Vert +\left\Vert X_{n+1}-x\right\Vert ,\left\Vert x-X_{1}\right\Vert \right\} \leq3\left\Vert x-Y_{2}\right\Vert 
\]
which means that 
\[
\left(B_{X_{n+1},\left\Vert X_{n+1}-X_{1}\right\Vert }\cup B_{x,\left\Vert x-X_{1}\right\Vert }\right)^{*}\subseteq B_{x,3\left\Vert x-Y_{2}\right\Vert }.
\]
On the other hand, it is always true that 
\[
\max\left\{ \left\Vert X_{n+1}-X_{1}\right\Vert ,\left\Vert x-X_{1}\right\Vert \right\} \geq\frac{1}{2}\left\Vert x-Y_{2}\right\Vert 
\]
which follows from the fact that if $\left\Vert x-X_{1}\right\Vert <\frac{1}{2}\left\Vert x-Y_{2}\right\Vert $,
then 
\[
\left\Vert X_{n+1}-X_{1}\right\Vert \geq\left\Vert X_{n+1}-x\right\Vert -\left\Vert x-X_{1}\right\Vert \geq\frac{1}{2}\left\Vert x-Y_{2}\right\Vert ;
\]
this means that $B_{X_{n+1},\left\Vert X_{n+1}-X_{1}\right\Vert }\cup B_{x,\left\Vert x-X_{1}\right\Vert }$
contains at least one ball with radius $\frac{1}{2}\left\Vert x-Y_{2}\right\Vert $.
Therefore, 
\begin{equation}
\lambda\left(B_{0,1}\right)\left(\frac{1}{2}\left\Vert x-Y_{2}\right\Vert \right)^{d}\leq\lambda\left(B_{X_{n+1},\left\Vert X_{n+1}-X_{1}\right\Vert }\cup B_{x,\left\Vert x-X_{1}\right\Vert }\right)\leq\lambda\left(B_{0,1}\right)\left(3\left\Vert x-Y_{2}\right\Vert \right)^{d}\label{eq:bound of leb measure of the union of balls}
\end{equation}
and
\[
\frac{\lambda\left(\left(B_{X_{n+1},\left\Vert X_{n+1}-X_{1}\right\Vert }\cup B_{x,\left\Vert x-X_{1}\right\Vert }\right)^{*}\right)}{\lambda\left(B_{X_{n+1},\left\Vert X_{n+1}-X_{1}\right\Vert }\cup B_{x,\left\Vert x-X_{1}\right\Vert }\right)}\leq\frac{\left(3\left\Vert x-Y_{2}\right\Vert \right)^{d}}{\left(\frac{1}{2}\left\Vert x-Y_{2}\right\Vert \right)^{d}}=6^{d}.
\]
In particular, we get that if $\left\Vert x-Y_{2}\right\Vert \leq\delta$,
then 

\begin{equation}
\frac{1-\epsilon}{1+\epsilon}\leq\frac{\mu_{f}\left(B_{X_{n+1},\left\Vert X_{n+1}-X_{1}\right\Vert }\cup B_{x,\left\Vert x-X_{1}\right\Vert }\right)}{\mu_{f}\left(B_{x,\left\Vert x-Y_{2}\right\Vert }\right)}\cdot\frac{\lambda\left(B_{x,\left\Vert x-Y_{2}\right\Vert }\right)}{\lambda\left(B_{X_{n+1},\left\Vert X_{n+1}-X_{1}\right\Vert }\cup B_{x,\left\Vert x-X_{1}\right\Vert }\right)}\leq\frac{1+\epsilon}{1-\epsilon}.\label{eq:bound on mu ratio in terms of lambda ratio}
\end{equation}

Now we return to (\ref{eq:prob estimate (I)+(II) in Step 4}). Combining
(\ref{eq:bound of leb measure of the union of balls}) and (\ref{eq:bound on mu ratio in terms of lambda ratio}),
we can estimate the first term on the right hand side of (\ref{eq:prob estimate (I)+(II) in Step 4})
as
\[
\begin{split} & \mathbb{P}\left(\mu_{f}\left(B_{X_{n+1},\left\Vert X_{n+1}-X_{1}\right\Vert }\cup B_{x,\left\Vert x-X_{1}\right\Vert }\right)\leq z,\left\Vert x-Y_{2}\right\Vert \leq\delta,\left(V,V^{\prime}\right)\neq\left(X_{1},X_{n+1}\right)\right)\\
\leq & \mathbb{P}\left(\frac{\lambda\left(B_{X_{n+1},\left\Vert X_{n+1}-X_{1}\right\Vert }\cup B_{x,\left\Vert x-X_{1}\right\Vert }\right)}{\lambda\left(B_{x,\left\Vert x-Y_{2}\right\Vert }\right)}\mu_{f}\left(B_{x,\left\Vert x-Y_{2}\right\Vert }\right)\leq\frac{1+\epsilon}{1-\epsilon}z,\left\Vert x-Y_{2}\right\Vert \leq\delta,\left(V,V^{\prime}\right)\neq\left(X_{1},X_{n+1}\right)\right)\\
\leq & \mathbb{P}\left(\frac{\left(\frac{1}{2}\left\Vert x-Y_{2}\right\Vert \right)^{d}}{\left\Vert x-Y_{2}\right\Vert ^{d}}\mu_{f}\left(B_{x,\left\Vert x-Y_{2}\right\Vert }\right)\leq\frac{1+\epsilon}{1-\epsilon}z,\left\Vert x-Y_{2}\right\Vert \leq\delta,\left(V,V^{\prime}\right)\neq\left(X_{1},X_{n+1}\right)\right)\\
= & \mathbb{P}\left(\mu_{f}\left(B_{x,\left\Vert x-Y_{2}\right\Vert }\right)\leq2^{d}\frac{1+\epsilon}{1-\epsilon}z\right)\cdot\mathbb{P}\left(\left\Vert x-Y_{2}\right\Vert \leq\delta\left|\mu_{f}\left(B_{x,\left\Vert x-Y_{2}\right\Vert }\right)\leq2^{d}\frac{1+\epsilon}{1-\epsilon}z\right.\right)\\
 & \hspace{2cm}\cdot\mathbb{P}\left(\left(V,V^{\prime}\right)\neq\left(X_{1},X_{n+1}\right)\left|\left\Vert x-Y_{2}\right\Vert \leq\delta,\mu_{f}\left(B_{x,\left\Vert x-Y_{2}\right\Vert }\right)\leq2^{d}\frac{1+\epsilon}{1-\epsilon}z\right.\right)\\
\leq & \mathbb{P}\left(\mu_{f}\left(B_{x,\left\Vert x-X_{1}\right\Vert }\right)\leq2^{d}\frac{1+\epsilon}{1-\epsilon}z\right)\cdot\mathbb{P}\left(\mu_{f}\left(B_{x,\left\Vert x-X_{n+1}\right\Vert }\right)\leq2^{d}\frac{1+\epsilon}{1-\epsilon}z\right)\\
 & \hspace{4cm}\cdot\sup_{\eta\leq\delta}\mathbb{P}\left(\left.\left(V,V^{\prime}\right)\neq\left(X_{1},X_{n+1}\right)\right|\left\Vert x-Y_{2}\right\Vert \leq\eta\right)\\
\leq & \left(2^{d}\frac{1+\epsilon}{1-\epsilon}\right)^{2}z^{2}\epsilon\text{ by (\ref{eq:estimate of P((V,V') neq (X1,Xn+1)})}.
\end{split}
\]
The last inequality above is due to the fact that 
\[
\mu_{f}\left(B_{x,\left\Vert x-Y_{2}\right\Vert }\right)\leq2^{d}\frac{1+\epsilon}{1-\epsilon}z\Longleftrightarrow\left\Vert x-Y_{2}\right\Vert \leq r^{*}
\]
where 
\[
r^{*}:=\sup\left\{ s\geq0:\int_{B_{x,s}}f\left(y\right)dy\leq2^{d}\frac{1+\epsilon}{1-\epsilon}z\right\} ,
\]
and hence 
\[
\left\{ \left\Vert x-Y_{2}\right\Vert \leq\delta,\mu_{f}\left(B_{x,\left\Vert x-Y_{2}\right\Vert }\right)\leq2^{d}\frac{1+\epsilon}{1-\epsilon}z\right\} =\left\{ \left\Vert x-Y_{2}\right\Vert \leq\min\left\{ \delta,r^{*}\right\} \right\} .
\]

We move on to the second term in (\ref{eq:prob estimate (I)+(II) in Step 4}).
Recalling the observation we made on (\ref{eq:expression of D_1 Step 3})
when $\left(V,V^{\prime}\right)=\left(X_{1},X_{n+1}\right)$ in Step
3, we have that
\begin{equation}
\begin{split} & \mathbb{P}\left(\mu_{f}\left(B_{X_{n+1},\left\Vert X_{n+1}-X_{1}\right\Vert }\cup B_{x,\left\Vert x-X_{1}\right\Vert }\right)\leq z,\left\Vert x-Y_{2}\right\Vert \leq\delta,\left(V,V^{\prime}\right)=\left(X_{1},X_{n+1}\right)\right)\\
\leq & \mathbb{P}\left(\frac{\lambda\left(B_{X_{n+1},\left\Vert X_{n+1}-X_{1}\right\Vert }\cup B_{x,\left\Vert x-X_{1}\right\Vert }\right)}{\lambda\left(B_{x,\left\Vert x-Y_{2}\right\Vert }\right)}\mu_{f}\left(B_{x,\left\Vert x-Y_{2}\right\Vert }\right)\leq\frac{1+\epsilon}{1-\epsilon}z,\left(V,V^{\prime}\right)=\left(X_{1},X_{n+1}\right)\right)\\
\leq & \mathbb{P}\left(\tilde{D}\cdot\mu_{f}\left(B_{x,\left\Vert x-Y_{2}\right\Vert }\right)\leq\frac{1+\epsilon}{1-\epsilon}z\right)\text{ where }\tilde{D}\text{ is as in (\ref{eq:expression of D_1 Step 3})}\\
= & \mathbb{E}\left[\mathbb{P}\left(\left.\tilde{D}\cdot\mu\left(B_{x,\left\Vert x-Y_{2}\right\Vert }\right)\leq\frac{1+\epsilon}{1-\epsilon}z\right|Y_{2}\right)\right]\\
= & \mathbb{P}\left(D_{1}\cdot\max\left\{ I_{1},I_{2}\right\} \leq\frac{1+\epsilon}{1-\epsilon}z\right)\text{ using same notations as in (\ref{eq:equiv prob in Step 2})}\\
= & z^{2}\left(\frac{1+\epsilon}{1-\epsilon}\right)^{2}\mathbb{E}\left[\frac{1}{D_{1}^{2}}\right].
\end{split}
\label{eq:limsup bound term 2}
\end{equation}
Combining all the arguments above, since $\epsilon>0$ is arbitrarily
small, we can conclude that 
\[
\limsup_{z\rightarrow0}z^{-2}\mathbb{P}\left(\mu_{f}\left(B_{X_{n+1},\left\Vert X_{n+1}-X_{1}\right\Vert }\cup B_{x,\left\Vert x-X_{1}\right\Vert }\right)\leq z\right)\leq\mathbb{E}\left[\frac{1}{D_{1}^{2}}\right].
\]

We now derive the lower bound needed to establish (\ref{eq:prob to be estimated step 1}).
When $z$ is sufficiently small, in particular, when $2^{d}z<\mu_{f}\left(B_{x,\delta}\right)$,
if 
\[
\frac{\lambda\left(B_{X_{n+1},\left\Vert X_{n+1}-X_{1}\right\Vert }\cup B_{x,\left\Vert x-X_{1}\right\Vert }\right)}{\lambda\left(B_{x,\left\Vert x-Y_{2}\right\Vert }\right)}\mu_{f}\left(B_{x,\left\Vert x-Y_{2}\right\Vert }\right)\leq\frac{1-\epsilon}{1+\epsilon}z,
\]
then, by (\ref{eq:bound of leb measure of the union of balls}), it
must be that 
\[
\mu_{f}\left(B_{x,\left\Vert x-Y_{2}\right\Vert }\right)\leq2^{d}z<\mu_{f}\left(B_{x,\delta}\right)
\]
which implies that $\left\Vert x-Y_{2}\right\Vert \leq\delta$. Therefore,
following (\ref{eq:prob estimate (I)+(II) in Step 4})-(\ref{eq:limsup bound term 2}),
we can derive the following estimate for $z$ sufficiently small:
\[
\begin{split} & \mathbb{P}\left(\mu_{f}\left(B_{X_{n+1},\left\Vert X_{n+1}-X_{1}\right\Vert }\cup B_{x,\left\Vert x-X_{1}\right\Vert }\right)\leq z\right)\\
\geq & \mathbb{P}\left(\mu_{f}\left(B_{X_{n+1},\left\Vert X_{n+1}-X_{1}\right\Vert }\cup B_{x,\left\Vert x-X_{1}\right\Vert }\right)\leq z,\left\Vert x-Y_{2}\right\Vert \leq\delta,\left(V,V^{\prime}\right)=\left(X_{1},X_{n+1}\right)\right)\\
\geq & \mathbb{P}\left(\frac{\lambda\left(B_{X_{n+1},\left\Vert X_{n+1}-X_{1}\right\Vert }\cup B_{x,\left\Vert x-X_{1}\right\Vert }\right)}{\lambda\left(B_{x,\left\Vert x-Y_{2}\right\Vert }\right)}\mu_{f}\left(B_{x,\left\Vert x-Y_{2}\right\Vert }\right)\leq\frac{1-\epsilon}{1+\epsilon}z,\left\Vert x-Y_{2}\right\Vert \leq\delta,\left(V,V^{\prime}\right)=\left(X_{1},X_{n+1}\right)\right)\\
= & \mathbb{P}\left(\frac{\lambda\left(B_{X_{n+1},\left\Vert X_{n+1}-X_{1}\right\Vert }\cup B_{x,\left\Vert x-X_{1}\right\Vert }\right)}{\lambda\left(B_{x,\left\Vert x-Y_{2}\right\Vert }\right)}\mu_{f}\left(B_{x,\left\Vert x-Y_{2}\right\Vert }\right)\leq\frac{1-\epsilon}{1+\epsilon}z,\left\Vert x-Y_{2}\right\Vert \leq\delta\right)\\
 & \hspace{0.2cm}-\mathbb{P}\left(\frac{\lambda\left(B_{X_{n+1},\left\Vert X_{n+1}-X_{1}\right\Vert }\cup B_{x,\left\Vert x-X_{1}\right\Vert }\right)}{\lambda\left(B_{x,\left\Vert x-Y_{2}\right\Vert }\right)}\mu_{f}\left(B_{x,\left\Vert x-Y_{2}\right\Vert }\right)\leq\frac{1-\epsilon}{1+\epsilon}z,\left\Vert x-Y_{2}\right\Vert \leq\delta,\left(V,V^{\prime}\right)\neq\left(X_{1},X_{n+1}\right)\right)\\
\geq & \mathbb{P}\left(\frac{\lambda\left(B_{X_{n+1},\left\Vert X_{n+1}-X_{1}\right\Vert }\cup B_{x,\left\Vert x-X_{1}\right\Vert }\right)}{\lambda\left(B_{x,\left\Vert x-Y_{2}\right\Vert }\right)}\mu_{f}\left(B_{x,\left\Vert x-Y_{2}\right\Vert }\right)\leq\frac{1-\epsilon}{1+\epsilon}z\right)\\
 & \hspace{4cm}-\mathbb{P}\left(\mu_{f}\left(B_{x,\left\Vert x-Y_{2}\right\Vert }\right)\leq2^{d}\frac{1-\epsilon}{1+\epsilon}z,\left\Vert x-Y_{2}\right\Vert \leq\delta,\left(V,V^{\prime}\right)\neq\left(X_{1},X_{n+1}\right)\right)\\
\geq & \mathbb{P}\left(\tilde{D}\cdot\mu_{f}\left(B_{x,\left\Vert x-Y_{2}\right\Vert }\right)\leq\frac{1-\epsilon}{1+\epsilon}z\right)-\mathbb{P}\left(\mu_{f}\left(B_{x,\left\Vert x-Y_{2}\right\Vert }\right)\leq2^{d}\frac{1-\epsilon}{1+\epsilon}z,\left\Vert x-Y_{2}\right\Vert \leq\delta,\left(V,V^{\prime}\right)\neq\left(X_{1},X_{n+1}\right)\right).
\end{split}
\]
where $\tilde{D}$ is as in (\ref{eq:expression of D_1 Step 3}).
The first term on the right hand side can be treated in exactly the
same way as in (\ref{eq:limsup bound term 2}), and it leads to 
\[
\mathbb{P}\left(\tilde{D}\cdot\mu_{f}\left(B_{x,\left\Vert x-Y_{2}\right\Vert }\right)\leq\frac{1-\epsilon}{1+\epsilon}z\right)=z^{2}\left(\frac{1-\epsilon}{1+\epsilon}\right)^{2}\mathbb{E}\left[\frac{1}{D_{1}^{2}}\right];
\]
the second term (without the ``$-$'' sign) can be bounded from
above by 
\[
\begin{split} & \mathbb{P}\left(\mu_{f}\left(B_{x,\left\Vert x-Y_{2}\right\Vert }\right)\leq2^{d}\frac{1-\epsilon}{1+\epsilon}z,\left\Vert x-Y_{2}\right\Vert \leq\delta,\left(V,V^{\prime}\right)\neq\left(X_{1},X_{n+1}\right)\right)\\
\leq & \mathbb{P}\left(\left(V,V^{\prime}\right)\neq\left(X_{1},X_{n+1}\right)\left|\left\Vert x-Y_{2}\right\Vert \leq\delta,\mu_{f}\left(B_{x,\left\Vert x-Y_{2}\right\Vert }\right)\leq2^{d}z\right.\right)\mathbb{P}\left(\mu_{f}\left(B_{x,\left\Vert x-Y_{2}\right\Vert }\right)\leq2^{d}z\right)\\
\leq & 2^{2d}z^{2}\epsilon\text{ by (\ref{eq:estimate of P((V,V') neq (X1,Xn+1)}).}
\end{split}
\]
Since $\epsilon>0$ is arbitrarily small, we have proven that 
\[
\liminf_{z\rightarrow0}z^{-2}\mathbb{P}\left(\mu_{f}\left(B_{X_{n+1},\left\Vert X_{n+1}-X_{1}\right\Vert }\cup B_{x,\left\Vert x-X_{1}\right\Vert }\right)\leq z\right)\geq\mathbb{E}\left[\frac{1}{D_{1}^{2}}\right].
\]
(\ref{eq:prob to be estimated step 1}) follows from here, and we
have proven (\ref{eq:theorem result for 1st moment}).\\

\noindent \textbf{Step 5: Determination of the limiting distribution:}
The proof for higher moments is completely similar. As in the case
$k=1$, $W=0$ will correspond to $X_{1}$ being the farthest point
from $x$ among-st $X_{1},X_{n+1},\cdots,X_{n+k}$ and $W=1$ will
correspond to $X_{1}$ being closer to $x$ than some other point
amongst $X_{n+1},\cdots,X_{n+k}$. Here, $X_{n+1},\cdots,X_{n+k}$
are new i.i.d. random variables that are independent of $X_{1},\cdots,X_{n}$
and have probability density function $f$. The role of $X_{n+1},\cdots,X_{n+k}$
in the proof is analogous to that of $X_{n+1}$ in the arguments above. 

Finally, we notice that for every $k\geq1$, $\frac{1}{D_{k}}\leq2^{d}$.
Therefore, for every $k\geq1$, 
\[
\mathbb{E}\left[\frac{\left(k+1\right)!}{D_{k}^{k+1}}\right]\leq2^{d(k+1)}\left(k+1\right)!
\]
and it follows that 
\[
\sum_{k=1}^{\infty}\left(\mathbb{E}\left[\frac{\left(k+1\right)!}{D_{k}^{k+1}}\right]\right)^{-\frac{1}{2k}}\geq\sum_{k=1}^{\infty}\left(2^{d(k+1)}\left(k+1\right)!\right)^{-\frac{1}{2k}}\geq2^{-d}\sum_{k=1}^{\infty}\left(\left(k+1\right)!\right)^{-\frac{1}{2k}}=\infty.
\]
Therefore, by Carleman's condition, these moments determine a unique
limiting distribution and the distribution of $n\mu_{f}\left(L_{n}\left(x\right)\right)$
weakly converges to this limit. The proof of Theorem \ref{thm:limiting distribution of n*L_n(x), second result in section 3}
is completed.
\end{proof}
While this result of the measure of $L_{n}\left(x\right)$ under $\mu_{f}$
is informative, in general, without \emph{a priori} knowledge of $f$,
only the Lebesgue measure of $L_{n}\left(x\right)$ will be observed
in the Voronoi diagram. With Theorems \ref{thm:diameter control on L_n(x), first result in section 3}
and Theorem \ref{thm:limiting distribution of n*L_n(x), second result in section 3}
in hand, we are now able to determine the relation between $f\left(x\right)$
and the asymptotics of the Lebesgue measure of $L_{n}\left(x\right)$,
which constitutes Theorem \ref{thm: convergence in distirbution for leb of L_n(x), 3rd result in Section 3}
stated below. \\

\noindent \textbf{Theorem \ref{thm: convergence in distirbution for leb of L_n(x), 3rd result in Section 3}.
}\emph{Let $f$ be any probability density function on $\mathbb{R}^{d}$,
$x$ be a Lebesgue point of $f$ such that $f(x)>0$, and $Z$ be
a random variable with the distribution $\mathscr{D}$ defined in
Theorem \ref{thm:limiting distribution of n*L_n(x), second result in section 3}.
Then, 
\[
nf\left(x\right)\lambda\left(L_{n}\left(x\right)\right)\rightarrow Z\text{ in distribution}.
\]
}
\begin{proof}
By Theorem \ref{thm:limiting distribution of n*L_n(x), second result in section 3},
we know that $n\mu_{f}\left(L_{n}\left(x\right)\right)\rightarrow Z$
in distribution. Therefore, by Slutsky's theorem, it is enough to
show that 
\[
\frac{\lambda\left(L_{n}\left(x\right)\right)}{\mu_{f}\left(L_{n}\left(x\right)\right)}\rightarrow\frac{1}{f\left(x\right)}\text{ in probability.}
\]
Let $\epsilon>0$ be arbitrary. By the Lebesgue Differentiation Theorem,
for every integer $c\geq1$, there exists $R_{c}>0$ such that if
$B\subseteq\mathbb{R}^{d}$ is a bounded Borel set with $\frac{\lambda\left(B^{*}\right)}{\lambda\left(B\right)}\leq c$,
where $B^{*}$ is the smallest ball centered at $x$ containing $B$,
and $\lambda\left(B\right)\leq R_{c}$, then 
\[
\left|\frac{\lambda\left(B\right)}{\mu_{f}\left(B\right)}-\frac{1}{f\left(x\right)}\right|<\epsilon.
\]
Without loss of generality, we can assume that $R_{c}$ is non-increasing
in $c$. Therefore, if 
\[
\left|\frac{\lambda\left(L_{n}\left(x\right)\right)}{\mu_{f}\left(L_{n}\left(x\right)\right)}-\frac{1}{f\left(x\right)}\right|\geq\epsilon,
\]
then it must be that, for every $c\geq1$, either $\frac{\lambda\left(L_{n}^{*}\left(x\right)\right)}{\lambda\left(L_{n}\left(x\right)\right)}>c$
or $\lambda\left(L_{n}\left(x\right)\right)>R_{c}$, and hence 
\[
\begin{split}\mathbb{P}\left(\left|\frac{\lambda\left(L_{n}\left(x\right)\right)}{\mu_{f}\left(L_{n}\left(x\right)\right)}-\frac{1}{f\left(x\right)}\right|\geq\epsilon\right) & \leq\inf_{c\ge1}\left[\mathbb{P}\left(\frac{\lambda\left(L_{n}^{*}\left(x\right)\right)}{\lambda\left(L_{n}\left(x\right)\right)}>c\right)+\mathbb{P}\left(\lambda\left(L_{n}\left(x\right)\right)>R_{c}\right)\right].\end{split}
\]
To show that the probability above goes to 0 as $n\rightarrow\infty$,
it is sufficient to show that for an arbitrarily small $\epsilon^{\prime}>0$,
when $n$ is sufficiently large, we can find $c\geq1$ such that 
\begin{equation}
\mathbb{P}\left(\frac{\lambda\left(L_{n}^{*}\left(x\right)\right)}{\lambda\left(L_{n}\left(x\right)\right)}>c\right)\leq\epsilon^{\prime}\label{eq:target prob estimate (1)}
\end{equation}
and 
\begin{equation}
\mathbb{P}\left(\lambda\left(L_{n}\left(x\right)\right)>R_{c}\right)\leq\epsilon^{\prime}.\label{eq:target prob estimate (2)}
\end{equation}

Let us examine (\ref{eq:target prob estimate (1)}) first. Set $d_{n}^{1}\left(x\right):=\left\Vert x-X_{(1)}\right\Vert $
and $d_{n}^{2}\left(x\right):=\left\Vert x-X_{(2)}\right\Vert $,
where for every $i\in\left\{ 1,\cdots,n\right\} $, $X_{(i)}$ denotes
the $i$th nearest neighbour of $x$ amongst $X_{1},\cdots,X_{n}$.
We observe that if $z\in B_{x,d_{n}^{2}\left(x\right)-d_{n}^{1}\left(x\right)}$,
then
\[
\left\Vert z-X_{(1)}\right\Vert \leq\left\Vert z-x\right\Vert +\left\Vert x-X_{(1)}\right\Vert <d_{n}^{2}\left(x\right)\leq\left\Vert z-X_{(i)}\right\Vert ,\;\forall i\geq2.
\]
In particular, we have that $B_{x,d_{n}^{2}\left(x\right)-d_{n}^{1}\left(x\right)}\subseteq L_{n}\left(x\right)$.
Then, we may conclude that
\begin{equation}
\mathbb{P}\left(\frac{\lambda\left(L_{n}^{*}\left(x\right)\right)}{\lambda\left(L_{n}\left(x\right)\right)}>c\right)\leq\mathbb{P}\left(\frac{\lambda\left(B_{x,D_{n}^{L}\left(x\right)}\right)}{\lambda\left(B_{x,d_{n}^{2}\left(x\right)-d_{n}^{1}\left(x\right)}\right)}>c\right)=\mathbb{P}\left(\frac{D_{n}^{L}\left(x\right)}{d_{n}^{2}\left(x\right)-d_{n}^{1}\left(x\right)}>c^{\frac{1}{d}}\right).\label{eq:prob estimate on leb(Ln*)/leb(Ln)}
\end{equation}
Let $l_{1}$ and $l_{2}$ be positive constants that we will specify
shortly and write $c:=\left(\frac{l_{2}}{l_{1}}\right)^{d}$. By Theorem
\ref{thm:diameter control on L_n(x), first result in section 3},
we may choose $l_{2}$ large enough such that for all $n$ sufficiently
large, 
\[
\mathbb{P}\left(D_{n}^{L}\left(x\right)>\frac{l_{2}}{n^{\frac{1}{d}}}\right)\leq\frac{1}{2}\epsilon^{\prime}.
\]
Next, we will show that by taking $l_{1}$ small, we can make $\mathbb{P}\left(d_{n}^{2}\left(x\right)-d_{n}^{1}\left(x\right)<\frac{l_{1}}{n^{\frac{1}{d}}}\right)$
smaller than $\frac{1}{2}\epsilon^{\prime}$. To this end, let $l_{3}$
be a third positive constant. Recall that $\text{Bin}\left(n,p\right)$
is the binomial distribution with parameters $n$ and $p\in\left(0,1\right)$.
We have that when $n$ is sufficiently large,
\[
\begin{split}\mathbb{P}\left(d_{n}^{2}\left(x\right)>\frac{l_{3}}{n^{\frac{1}{d}}}\right) & =\text{Bin}\left(n,\mu_{f}\left(B_{x,\frac{l_{3}}{n^{\frac{1}{d}}}}\right)\right)\left(\left\{ 0,1\right\} \right)\\
 & \leq\exp\left[1-\frac{1}{2}f\left(x\right)\lambda\left(B_{0,1}\right)l_{3}^{d}+\ln\left(\frac{3}{2}f\left(x\right)\lambda\left(B_{0,1}\right)l_{3}^{d}\right)\right]
\end{split}
\]
where we applied Chernoff's bound (Lemma \ref{lem: Appendix Chernoff's-bound}
in the Appendix) and the Lebesgue Differentiation Theorem. By choosing
$l_{3}$ sufficiently large, we have that 
\[
\mathbb{P}\left(d_{n}^{2}\left(x\right)>\frac{l_{3}}{n^{\frac{1}{d}}}\right)<\frac{1}{6}\epsilon^{\prime}\text{ for all }n\text{ sufficiently large}.
\]
On the other hand, by choosing $l_{1}$ small, Chernoff's bound (Lemma
\ref{lem: Appendix Chernoff's-bound} in the Appendix) leads to that,
for all $n$ sufficiently large,
\[
\begin{split}\mathbb{P}\left(d_{n}^{2}\left(x\right)\leq\frac{l_{1}}{n^{\frac{1}{d}}}\right) & =\text{Bin}\left(n,\mu_{f}\left(B_{x,\frac{l_{1}}{n^{\frac{1}{d}}}}\right)\right)\left([2,\infty)\right)\\
 & \leq\exp\left[2-\frac{1}{2}f\left(x\right)\lambda\left(B_{0,1}\right)l_{1}^{d}+2\ln\left(\frac{3}{4}f\left(x\right)\lambda\left(B_{0,1}\right)l_{1}^{d}\right)\right],
\end{split}
\]
which can be made smaller than $\frac{1}{6}\epsilon^{\prime}$ provided
that $l_{1}$ is sufficiently small. Thus, for all $n$ sufficiently
large,
\begin{equation}
\begin{split}\mathbb{P}\left(d_{n}^{2}\left(x\right)-d_{n}^{1}\left(x\right)<\frac{l_{1}}{n^{\frac{1}{d}}}\right)\leq & \mathbb{P}\left(d_{n}^{2}\left(x\right)-d_{n}^{1}\left(x\right)<\frac{l_{1}}{n^{\frac{1}{d}}},\frac{l_{1}}{n^{\frac{1}{d}}}<d_{n}^{2}\left(x\right)\leq\frac{l_{3}}{n^{\frac{1}{d}}}\right)\\
 & \hspace{3cm}+\mathbb{P}\left(d_{n}^{2}\left(x\right)>\frac{l_{3}}{n^{\frac{1}{d}}}\right)+\mathbb{P}\left(d_{n}^{2}\left(x\right)\leq\frac{l_{1}}{n^{\frac{1}{d}}}\right)\\
\leq & \mathbb{P}\left(d_{n}^{2}\left(x\right)-d_{n}^{1}\left(x\right)<\frac{l_{1}}{n^{\frac{1}{d}}},\frac{l_{1}}{n^{\frac{1}{d}}}<d_{n}^{2}\left(x\right)\leq\frac{l_{3}}{n^{\frac{1}{d}}}\right)+\frac{1}{3}\epsilon^{\prime}.
\end{split}
\label{eq:prob estimate on dn^2 - dn^1}
\end{equation}
 Now, observe that $d_{n}^{1}\left(x\right)$ and $d_{n}^{2}\left(x\right)$
are the first and second smallest values amongst the i.i.d. random
variables $\left\Vert X_{1}-x\right\Vert ,\cdots,\left\Vert X_{n}-x\right\Vert $.
Let $H$ be the cumulative distribution function of $\left\Vert X_{1}-x\right\Vert $
and $h$ be its probability density function. We have that
\[
\begin{split} & \mathbb{P}\left(d_{n}^{2}\left(x\right)-d_{n}^{1}\left(x\right)<\frac{l_{1}}{n^{\frac{1}{d}}},\frac{l_{1}}{n^{\frac{1}{d}}}<d_{n}^{2}\left(x\right)\leq\frac{l_{3}}{n^{\frac{1}{d}}}\right)\\
= & \int_{l_{1}n^{-\frac{1}{d}}}^{l_{3}n^{-\frac{1}{d}}}\int_{y_{2}-l_{1}n^{-\frac{1}{d}}}^{y_{2}}n\left(n-1\right)h\left(y_{1}\right)h\left(y_{2}\right)\left(1-H\left(y_{2}\right)\right)^{n-2}dy_{1}dy_{2}\\
= & n\left(n-1\right)\int_{l_{1}n^{-\frac{1}{d}}}^{l_{3}n^{-\frac{1}{d}}}\left[H\left(y_{2}\right)-H\left(y_{2}-\frac{l_{1}}{n^{\frac{1}{d}}}\right)\right]h\left(y_{2}\right)\left(1-H\left(y_{2}\right)\right)^{n-2}dy_{2}.
\end{split}
\]
For every $y_{2}\in\left(\frac{l_{1}}{n^{\frac{1}{d}}},\frac{l_{3}}{n^{\frac{1}{d}}}\right]$
and every $n\geq1$, $\left(B_{x,y_{2}}\backslash B_{x,y_{2}-l_{1}n^{-\frac{1}{d}}}\right)^{*}\subseteq B_{x,y_{2}}$,
\[
\frac{\lambda\left(\left(B_{x,y_{2}}\backslash B_{x,y_{2}-l_{1}n^{-\frac{1}{d}}}\right)^{*}\right)}{\lambda\left(B_{x,y_{2}}\backslash B_{x,y_{2}-l_{1}n^{-\frac{1}{d}}}\right)}\leq\frac{\lambda\left(B_{x,y_{2}}\right)}{\lambda\left(B_{x,y_{2}}\backslash B_{x,y_{2}-l_{1}n^{-\frac{1}{d}}}\right)}\leq\frac{l_{3}^{d}}{l_{3}^{d}-\left(l_{3}-l_{1}\right)^{d}},
\]
and
\[
\lambda\left(B_{x,y_{2}}\backslash B_{x,y_{2}-l_{1}n^{-\frac{1}{d}}}\right)\leq\lambda\left(B_{0,1}\right)\frac{l_{3}^{d}}{n}.
\]
Therefore, using the generalized Lebesgue Differentiation Theorem
again, we have that, when $n$ is sufficiently large,
\[
\begin{split}H\left(y_{2}\right)-H\left(y_{2}-\frac{l_{1}}{n^{\frac{1}{d}}}\right) & =\mathbb{P}\left(y_{2}-\frac{l_{1}}{n^{\frac{1}{d}}}<\left\Vert X_{1}-x\right\Vert \leq y_{2}\right)\\
 & =\mu_{f}\left(B_{x,y_{2}}\backslash B_{x,y_{2}-l_{1}n^{-\frac{1}{d}}}\right)\\
 & \leq\frac{3}{2}f\left(x\right)\lambda\left(B_{x,y_{2}}\backslash B_{x,y_{2}-l_{1}n^{-\frac{1}{d}}}\right)\\
 & =\frac{3}{2}f\left(x\right)\lambda\left(B_{0,1}\right)f\left(x\right)\left[y_{2}^{d}-\left(y_{2}-\frac{l_{1}}{n^{\frac{1}{d}}}\right)^{d}\right]\leq\frac{\frac{3d}{2}f\left(x\right)\lambda\left(B_{0,1}\right)l_{3}^{d-1}l_{1}}{n}.
\end{split}
\]
It follows that, when $n$ is sufficiently large, 
\[
\begin{split} & \mathbb{P}\left(d_{n}^{2}\left(x\right)-d_{n}^{1}\left(x\right)<\frac{l_{1}}{n^{\frac{1}{d}}},\frac{l_{1}}{n^{\frac{1}{d}}}<d_{n}^{2}\left(x\right)\leq\frac{l_{3}}{n^{\frac{1}{d}}}\right)\\
= & n\left(n-1\right)\int_{l_{1}n^{-\frac{1}{d}}}^{l_{3}n^{-\frac{1}{d}}}\left[H\left(y_{2}\right)-H\left(y_{2}-\frac{l_{1}}{n^{\frac{1}{d}}}\right)\right]h\left(y_{2}\right)\left(1-H\left(y_{2}\right)\right)^{n-2}dy_{2}\\
\leq & \frac{3d}{2}f\left(x\right)\lambda\left(B_{0,1}\right)l_{3}^{d-1}l_{1}\left(n-1\right)\int_{l_{1}n^{-\frac{1}{d}}}^{l_{3}n^{-\frac{1}{d}}}h\left(y_{2}\right)\left(1-H\left(y_{2}\right)\right)^{n-2}dy_{2}\\
= & \frac{3d}{2}f\left(x\right)\lambda\left(B_{0,1}\right)l_{3}^{d-1}l_{1}\left[\left(1-H\left(\frac{l_{1}}{n^{\frac{1}{d}}}\right)\right)^{n-1}-\left(1-H\left(\frac{l_{3}}{n^{\frac{1}{d}}}\right)\right)^{n-1}\right]\\
\leq & \frac{3d}{2}f\left(x\right)\lambda\left(B_{0,1}\right)l_{3}^{d-1}l_{1}\leq\frac{1}{6}\epsilon^{\prime}
\end{split}
\]
so long as $l_{1}$ is sufficiently small. Combining with (\ref{eq:prob estimate on dn^2 - dn^1}),
we have obtained that 
\[
\mathbb{P}\left(d_{n}^{2}\left(x\right)-d_{n}^{1}\left(x\right)<\frac{l_{1}}{n^{\frac{1}{d}}}\right)\leq\frac{1}{2}\epsilon^{\prime}
\]
for all $n$ sufficiently large. Returning to (\ref{eq:prob estimate on leb(Ln*)/leb(Ln)}),
we see that
\[
\begin{split}\mathbb{P}\left(\frac{\lambda\left(L_{n}^{*}\left(x\right)\right)}{\lambda\left(L_{n}\left(x\right)\right)}>\left(\frac{l_{2}}{l_{1}}\right)^{d}\right) & =\mathbb{P}\left(\frac{D_{n}^{L}\left(x\right)}{d_{n}^{2}\left(x\right)-d_{n}^{1}\left(x\right)}>\frac{l_{2}}{l_{1}}\right)\\
 & \leq\mathbb{P}\left(D_{n}^{L}\left(x\right)>\frac{l_{2}}{n^{\frac{1}{d}}}\right)+\mathbb{P}\left(d_{n}^{2}\left(x\right)-d_{n}^{1}\left(x\right)<\frac{l_{1}}{n^{\frac{1}{d}}}\right)\\
 & \leq\frac{1}{2}\epsilon^{\prime}+\frac{1}{2}\epsilon^{\prime}=\epsilon^{\prime}
\end{split}
\]
for all $n$ sufficiently large, which confirms (\ref{eq:target prob estimate (1)}).

As for (\ref{eq:target prob estimate (2)}), for the specific choice
of $l_{1}$, $l_{2}$ and $c=\left(\frac{l_{2}}{l_{1}}\right)^{d}$,
$R_{c}$ is a positive constant. We apply Theorem \ref{thm:diameter control on L_n(x), first result in section 3}
again to see that 
\[
\begin{split}\mathbb{P}\left(\lambda\left(L_{n}\left(x\right)\right)>R_{c}\right) & \le\mathbb{P}\left(\lambda\left(B_{x,D_{n}^{L}\left(x\right)}\right)>R_{c}\right)=\mathbb{P}\left(D_{n}^{L}\left(x\right)>\left(\frac{R_{c}}{\lambda\left(B_{0,1}\right)}\right)^{\frac{1}{d}}\right)\leq\epsilon^{\prime}\end{split}
\]
by making $n$ even larger if necessary. The proof of Theorem \ref{thm: convergence in distirbution for leb of L_n(x), 3rd result in Section 3}
is thus completed. 
\end{proof}

\section{Asymptotic Independence of Measures of Disjoint Voronoi Cells}

\noindent We will conclude this work by showing that for large $n$,
the ``configurations'' of disjoint regions of the Voronoi diagram
are ``almost'' independent of one another. We state two versions
of this result, one for each of the settings considered above. Since
the proofs of these two theorems are identical we only provide the
proof of Theorem \ref{thm:asymptotic independence result of Section 4}.

In the setting where the Voronoi cells are assumed to contain fixed
points, we have the following result.\\

\noindent \textbf{Theorem \ref{thm:asymptotic independence result of Section 4}.}
\emph{Let $k\geq2$ be an integer and $Z_{1},\cdots,Z_{k}$ be i.i.d.
random variables with the distribution $\mathscr{D}$ defined in Theorem
\ref{thm:limiting distribution of n*L_n(x), second result in section 3}.
Assume that $f$ is a probability density function on $\mathbb{R}^{d}$
and $x_{1},\cdots,x_{k}$ are $k$ distinct Lebesgue points of $f$
such that $f\left(x_{1}\right),\cdots,f\left(x_{k}\right)$ are all
positive. Then, 
\[
\left(n\mu_{f}\left(L_{n}\left(x_{1}\right)\right),\cdots,n\mu_{f}\left(L_{n}\left(x_{k}\right)\right)\right)\rightarrow\left(Z_{1},\cdots,Z_{k}\right)\text{ in distribution}.
\]
}

In the setting where the Voronoi cells are assumed to have fixed nuclei,
we also have ``asymptotic'' independence of the measures of the
cells. \\
\begin{thm}
Let $k\geq2$ be an integer and $Z_{1}^{\prime},\cdots,Z_{k}^{\prime}$
be i.i.d. random variables following the limiting distribution defined
in Theorem 1 of \cite{Devroye2017}. \emph{Assume that $f$ is a probability
density function on $\mathbb{R}^{d}$ and $x_{1},\cdots,x_{k}$ are
$k$ distinct Lebesgue points of $f$ such that $f\left(x_{1}\right),\cdots,f\left(x_{k}\right)$
are all positive.} Let $\mu_{f}\left(A_{n}^{\prime}\left(x_{1}\right)\right),\cdots,\mu_{f}\left(A_{n}^{\prime}\left(x_{k}\right)\right)$
be the Voronoi cells with nuclei $x_{1},\cdots,x_{k}$, respectively,
in the Voronoi diagram generated by $\left\{ x_{1},\cdots,x_{k},X_{1},\cdots,X_{n}\right\} $.
Then, 
\[
\left(n\mu_{f}\left(A_{n}^{\prime}\left(x_{1}\right)\right),\cdots,n\mu_{f}\left(A_{n}^{\prime}\left(x_{k}\right)\right)\right)\rightarrow\left(Z_{1}^{\prime},\cdots,Z_{k}^{\prime}\right)\text{, in distribution}.
\]
\end{thm}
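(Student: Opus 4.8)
The plan is to imitate the proof of Theorem~\ref{thm:asymptotic independence result of Section 4} almost verbatim, the only change being that the single-cell input supplied there by Theorem~\ref{thm:limiting distribution of n*L_n(x), second result in section 3} is now supplied by \cite{Devroye2017}. Concretely, I would prove that every mixed moment $\mathbb{E}\bigl[\prod_{i=1}^{k}(n\mu_f(A_n'(x_i)))^{k_i}\bigr]$ converges, as $n\to\infty$, to $\prod_{i=1}^{k}\mathbb{E}[(Z_i')^{k_i}]$, and then pass from the convergence of all mixed moments to convergence in distribution. This last passage is justified because the limiting law of $n\mu_f(A_n(x))$ established in \cite{Devroye2017} is determined by its moments (Carleman's condition, exactly as for $\mathscr{D}$ in Theorem~\ref{thm:limiting distribution of n*L_n(x), second result in section 3}), a probability measure on $\mathbb{R}^{k}$ all of whose one-dimensional marginals are moment-determinate is itself moment-determinate, and the prospective limit moments above are precisely those of the product of $k$ independent copies of that law; the method of moments then gives $(n\mu_f(A_n'(x_1)),\dots,n\mu_f(A_n'(x_k)))\to(Z_1',\dots,Z_k')$ in distribution.

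For the moment asymptotics I would first localize each cell. Applying the Appendix diameter estimate (Lemma~\ref{lem:Appendix control on diameter of A_n(x) and P_n(x)}) to each $A_n'(x_i)$, for any $\epsilon>0$ there is $t>0$ with $\mathbb{P}(D_n^{A'}(x_i)>t n^{-1/d})<\epsilon$ for all $i$ and all $n$ large. Once $t n^{-1/d}<\tfrac12\min_{i\neq j}\|x_i-x_j\|$, the perpendicular bisector between $x_i$ and any other fixed nucleus $x_j$ misses $B_{x_i,t n^{-1/d}}$, so on this event $A_n'(x_i)$ equals the single-nucleus cell $A_n(x_i)$, and by Lemma~\ref{lem:Appendix range of nuclei relevant to configuration of cell } it is a measurable function of $\{X_1,\dots,X_n\}\cap B_i$ only, where the balls $B_i:=B_{x_i,2t n^{-1/d}}$ are pairwise disjoint. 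As in Step~1 of the proof of Theorem~\ref{thm:limiting distribution of n*L_n(x), second result in section 3} and in the proof of Proposition~\ref{prop:result on bddness of second moment of number of edges}, I would then introduce, for each $i$, $k_i$ fresh i.i.d.\ points with density $f$ and rewrite $\mathbb{E}\bigl[\prod_i(n\mu_f(A_n'(x_i)))^{k_i}\bigr]$ as the expectation of a product of terms $(1-\mu_f(\cdot))^{n-O(1)}$, the $i$-th group of balls being localized near $x_i$ on the good event.

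The heart of the argument is the factorization of this expectation, which is the $k$-fold analogue of Facts~1 and~2 in the proof of Theorem~\ref{thm:convergence in distribution main result in Section 2}. Conditioning on the occupation numbers $N_{\{X_1,\dots,X_n\}}(B_i)$, $i=1,\dots,k$, the configurations inside the disjoint balls $B_1,\dots,B_k$ become mutually independent, with the points in $B_i$ i.i.d.\ of density $f|_{B_i}/\mu_f(B_i)$; the occupation numbers are jointly multinomial and, by the computation behind Fact~1, converge jointly to independent Poisson variables with parameters $\lambda(B_{0,1})f(x_i)2^{d}t^{d}$; and, by the $L^{1}$ estimate behind Fact~2 telescoped over the $k$ groups, conditionally on the occupation numbers the configurations lie within $o(1)$ in total variation of $k$ independent batches of i.i.d.\ uniform points on the $B_i$. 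Feeding this into the moment expression and applying the single-cell result of \cite{Devroye2017} to each group yields the claimed limit $\prod_{i=1}^{k}\mathbb{E}[(Z_i')^{k_i}]$, which together with the determinacy discussion above completes the proof.

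I expect the main obstacle to be exactly this factorization: pushing the single-ball Facts~1 and~2 through to product form requires handling the multinomial coefficient $\binom{n}{m_1,\dots,m_k}$, applying the Lebesgue differentiation theorem simultaneously at $x_1,\dots,x_k$, and telescoping the $L^{1}$ bound across the $k$ groups, all of which is lengthy but routine given the disjointness of the $B_i$ and the i.i.d.\ structure of $\{X_1,\dots,X_n\}$. The one ingredient genuinely absent from the single-cell proofs is the elementary observation that, once the cells have diameter $O(n^{-1/d})$, the remaining fixed nuclei $x_j$ are too far to influence $A_n'(x_i)$, so that $A_n'(x_i)$ may be replaced by $A_n(x_i)$ throughout.
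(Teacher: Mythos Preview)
Your approach is sound, but it differs from the paper's in one essential respect: the paper never touches mixed moments. Instead, it works directly with cumulative distribution functions. For $k=2$ (and analogously for general $k$), the paper fixes continuity points $z_1,z_2$ of $F_{Z'}$, localizes exactly as you describe (diameter control, disjoint balls $B_{x_q,t n^{-1/d}}$, bounds on the occupation numbers via Chernoff), and then shows directly that
\[
\left|F^{n}_{x_1,x_2}(z_1,z_2)-F^{n}_{x_1}(z_1)F^{n}_{x_2}(z_2)\right|\to 0.
\]
The key computation conditions on which $X_m$'s fall in each ball, uses that the resulting sub-configurations in disjoint balls are independent, and compares the multinomial weight $\binom{n}{i}\binom{n-i}{j}$ with the product $\binom{n}{i}\binom{n}{j}$ through an elementary ratio $\alpha_n\to 1$; three residual terms $\Phi_1,\Phi_2,\Phi_3$ are then shown to vanish by the Lebesgue differentiation theorem. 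The marginal convergence from \cite{Devroye2017} is invoked only at the very end as a black box, so no moment determinacy, no fresh auxiliary points, and no re-derivation of single-cell asymptotics are needed.

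Your moment route would also work, and your handling of the only genuinely new wrinkle---that the extra fixed nuclei $x_j$ become irrelevant to $A_n'(x_i)$ once the diameter is $O(n^{-1/d})$---is correct. The trade-off is that you must (i) re-run the tail/integration arguments of \cite{Devroye2017} in product form rather than quoting the marginal result, and (ii) justify moment determinacy of the product law, both of which the paper sidesteps entirely. The paper's CDF argument is shorter and uses the single-cell theorem as a finished product; your argument is more self-contained but duplicates work already done in \cite{Devroye2017}.
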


\begin{proof}
\textit{(Proof of Theorem \ref{thm:asymptotic independence result of Section 4})}
We will only provide an explicit proof in the case $k=2$. The proof
of the general case is highly similar, but has cumbersome notations.
Let $F_{x_{1},x_{2}}^{n}$ be the joint distribution function of $\left(n\mu_{f}\left(L_{n}\left(x_{1}\right)\right),n\mu_{f}\left(L_{n}\left(x_{2}\right)\right)\right)$
and $F_{x_{1}}^{n}$ and $F_{x_{2}}^{n}$ be the corresponding marginal
distribution functions. Take $F_{Z}$ to be the distribution function
for $Z_{1}$ and recall that by Theorem \ref{thm:limiting distribution of n*L_n(x), second result in section 3}
we have that, for every $z\in\mathbb{R}^{d}$ that is a continuity
point of $F_{Z}$, $F_{x_{1}}^{n}\left(z\right)\rightarrow F_{Z}\left(z\right)$
and $F_{x_{2}}^{n}\left(z\right)\rightarrow F_{Z}\left(z\right)$
as $n\rightarrow\infty$. Thus, it is enough to show that for every
$z_{1},z_{2}$ both continuity points of $F_{Z}$, 
\begin{equation}
\lim_{n\rightarrow\infty}\left|F_{x_{1},x_{2}}^{n}\left(z_{1},z_{2}\right)-F_{x_{1}}^{n}\left(z_{1}\right)F_{x_{2}}^{n}\left(z_{2}\right)\right|=0.\label{eq:target limit of joint DF v.s. marginal DFs}
\end{equation}

Let $\epsilon>0$ be arbitrary. By Theorem \ref{thm:diameter control on L_n(x), first result in section 3},
we may choose $t$ large, such that for all $n$ sufficiently large,
\[
\mathbb{P}\left(D_{n}^{L}\left(x_{1}\right)>\frac{t}{4n^{\frac{1}{d}}}\right)\leq\frac{\epsilon}{16}
\]
 and 
\[
\mathbb{P}\left(D_{n}^{L}\left(x_{2}\right)>\frac{t}{4n^{\frac{1}{d}}}\right)\leq\frac{\epsilon}{16}.
\]
Recall that for every Borel set $B\subseteq\mathbb{R}^{d}$, $N_{\left\{ X_{1},\cdots,X_{n}\right\} }\left(B\right)$
is the number of points among $\left\{ X_{1},\cdots,X_{n}\right\} $
that fall inside $B$. Let $M$ be a large constant depending only
on $t$ whose value will be specified shortly. Then, for all $n$
sufficiently large, 
\[
\begin{split}\mathbb{P}\left(N_{\left\{ X_{1},\cdots,X_{n}\right\} }\left(B_{x_{1},\frac{t}{n^{\frac{1}{d}}}}\right)>M\right) & =\text{Bin}\left(n,\mu_{f}\left(B_{x_{1},\frac{t}{n^{\frac{1}{d}}}}\right)\right)\left(\left(M,\infty\right)\right)\\
 & \leq\exp\left[M-\frac{1}{2}f\left(x_{1}\right)\lambda\left(B_{0,1}\right)t^{d}-M\ln\left(\frac{2M}{\lambda\left(B_{0,1}\right)3f\left(x_{1}\right)t^{d}}\right)\right]\\
 & \leq\frac{1}{16}\epsilon
\end{split}
\]
so long as $M$ is sufficiently large, where we used Chernoff's bound
(Lemma \ref{lem: Appendix Chernoff's-bound} in the Appendix) again.
Similarly, by choosing $M$ large, we may also ensure that
\[
\mathbb{P}\left(N_{\left\{ X_{1},\cdots,X_{n}\right\} }\left(B_{x_{2},\frac{t}{n^{\frac{1}{d}}}}\right)>M\right)\leq\frac{1}{16}\epsilon
\]
for all $n$ sufficiently large. Thus, if for every $n\geq1$, 
\[
E_{n}:=\left\{ N_{\left\{ X_{1},\cdots,X_{n}\right\} }\left(B_{x_{1},\frac{t}{n^{\frac{1}{d}}}}\right)\leq M,N_{\left\{ X_{1},\cdots,X_{n}\right\} }\left(B_{x_{2},\frac{t}{n^{\frac{1}{d}}}}\right)\leq M,D_{n}^{L}\left(x_{1}\right)\leq\frac{t}{4n^{\frac{1}{d}}},D_{n}^{L}\left(x_{2}\right)\leq\frac{t}{4n^{\frac{1}{d}}}\right\} ,
\]
then $\mathbb{P}\left(E_{n}\right)\geq1-\frac{\epsilon}{4}$ when
$n$ is sufficiently large. Further, we may restrict to $n$ large
such that $B_{x_{1},\frac{t}{n^{\frac{1}{d}}}}\cap B_{x_{2},\frac{t}{n^{\frac{1}{d}}}}=\emptyset$
and $n>2M$. 

Now we turn our attention to (\ref{eq:target limit of joint DF v.s. marginal DFs}).
For every $n\geq1$, $z_{1}$, $z_{2}$ two continuity points of $F_{Z}$,
set 
\[
p_{n}:=F_{x_{1},x_{2}}^{n}\left(z_{1},z_{2}\right)-\mathbb{P}\left(\left\{ n\mu_{f}\left(L_{n}\left(x_{1}\right)\right)\leq z_{1},n\mu_{f}\left(L_{n}\left(x_{2}\right)\right)\leq z_{2}\right\} \cap E_{n}\right).
\]
We remark that $p_{n}\leq\frac{\epsilon}{4}$ for all $n$ sufficiently
large. Thus, we can write 
\[
\begin{split} & F_{x_{1},x_{2}}^{n}\left(z_{1},z_{2}\right)\\
= & \mathbb{P}\left(\left\{ n\mu_{f}\left(L_{n}\left(x_{1}\right)\right)\leq z_{1},n\mu_{f}\left(L_{n}\left(x_{2}\right)\right)\leq z_{2}\right\} \cap E_{n}\right)+p_{n}\\
= & \sum_{i=1}^{M}\sum_{j=1}^{M}\mathbb{P}\left(n\mu_{f}\left(L_{n}\left(x_{1}\right)\right)\leq z_{1},n\mu_{f}\left(L_{n}\left(x_{2}\right)\right)\leq z_{2},D_{n}^{L}\left(x_{1}\right)\leq\frac{t}{4n^{\frac{1}{d}}},D_{n}^{L}\left(x_{2}\right)\leq\frac{t}{4n^{\frac{1}{d}}},\right.\\
 & \hspace{4cm}\left.N_{\left\{ X_{1},\cdots,X_{n}\right\} }\left(B_{x_{1},\frac{t}{n^{\frac{1}{d}}}}\right)=i,N_{\left\{ X_{1},\cdots,X_{n}\right\} }\left(B_{x_{2},\frac{t}{n^{\frac{1}{d}}}}\right)=j\right)+p_{n}\\
= & \sum_{i=1}^{M}\sum_{j=1}^{M}{n \choose i}{n-i \choose j}\mathbb{P}\left(n\mu_{f}\left(L_{n}\left(x_{1}\right)\right)\leq z_{1},n\mu_{f}\left(L_{n}\left(x_{2}\right)\right)\leq z_{2},D_{n}^{L}\left(x_{1}\right)\leq\frac{t}{4n^{\frac{1}{d}}},D_{n}^{L}\left(x_{2}\right)\leq\frac{t}{4n^{\frac{1}{d}}},\right.\\
 & \hspace{1cm}\left.X_{1},\cdots,X_{i}\in B_{x_{1},\frac{t}{n^{\frac{1}{d}}}},X_{i+1},\cdots,X_{i+j}\in B_{x_{2},\frac{t}{n^{\frac{1}{d}}}},X_{i+j+1},\dots,X_{n}\notin B_{x_{1},\frac{t}{n^{\frac{1}{d}}}}\cup B_{x_{2},\frac{t}{n^{\frac{1}{d}}}}\right)+p_{n}.
\end{split}
\]
From here, we proceed similarly as in the proof of Theorem \ref{thm:convergence in distribution main result in Section 2}.
Namely, by forcing $D_{n}^{L}\left(x_{1}\right)\leq\frac{t}{4n^{\frac{1}{d}}}$
(respectively $D_{n}^{L}\left(x_{2}\right)\leq\frac{t}{4n^{\frac{1}{d}}}$)
and Lemma \ref{lem:Appendix range of nuclei relevant to configuration of cell }
in the Appendix, any point outside of the ball $B_{x_{1},\frac{t}{n^{\frac{1}{d}}}}$
(respectively $B_{x_{2},\frac{t}{n^{\frac{1}{d}}}}$) cannot ``contribute''
to the configuration of $L_{n}\left(x_{1}\right)$ (respectively $L_{n}\left(x_{2}\right)$).
Thus, for $q=1,2$, $1\leq k\leq M$ and $1\leq i_{1}<\cdots<i_{k}\leq n$,
if we set
\[
V_{i_{1},\cdots,i_{k}}^{x_{q}}:=\left\{ B_{x_{q},\frac{t}{n^{\frac{1}{d}}}}\cap\left\{ X_{1},\cdots,X_{n}\right\} =\left\{ X_{i_{1}},\cdots,X_{i_{k}}\right\} ,D_{n}^{L}\left(x_{q}\right)\leq\frac{t}{4n^{\frac{1}{d}}},n\mu_{f}\left(L_{n}\left(x_{q}\right)\right)\leq z_{q}\right\} 
\]
and let $L_{i_{1},\cdots,i_{k}}\left(x_{q}\right)$ be the cell containing
$x_{q}$ in the Voronoi diagram generated only by $\left\{ X_{i_{1}},\cdots,X_{i_{k}}\right\} $
and $D_{i_{1},\cdots,i_{k}}^{L}\left(x_{q}\right)$ be the diameter
of $L_{i_{1},\cdots,i_{k}}\left(x_{q}\right)$, then, within $V_{i_{1},\cdots,i_{k}}^{x_{q}}$,
$L_{n}\left(x_{q}\right)=L_{i_{1},\cdots,i_{k}}\left(x_{q}\right)$
and it is easy to see that $V_{i_{1},\cdots,i_{k}}^{x_{q}}$ is identical
to 
\[
\left\{ B_{x_{q},\frac{t}{n^{\frac{1}{d}}}}\cap\left\{ X_{1},\cdots,X_{n}\right\} =\left\{ X_{i_{1}},\cdots,X_{i_{k}}\right\} ,\ D_{i_{1},\cdots,i_{k}}^{L}\left(x_{q}\right)\leq\frac{t}{4n^{\frac{1}{d}}}\text{, and }n\mu_{f}\left(L_{i_{1},\cdots,i_{k}}\left(x_{q}\right)\right)\leq z_{q}\right\} .
\]
Therefore, we can rewrite $F_{x_{1},x_{2}}^{n}\left(z_{1},z_{2}\right)$
as 
\[
\begin{split} & F_{x_{1},x_{2}}^{n}\left(z_{1},z_{2}\right)\\
= & \sum_{i=1}^{M}\sum_{j=1}^{M}{n \choose i}{n-i \choose j}\mathbb{P}\left(V_{1,\cdots,i}^{x_{1}}\cap V_{i+1,\cdots,i+j}^{x_{2}}\cap\left\{ X_{i+j+1},\dots,X_{n}\notin B_{x_{1},\frac{t}{n^{\frac{1}{d}}}}\cup B_{x_{2},\frac{t}{n^{\frac{1}{d}}}}\right\} \right)+p_{n}\\
= & \sum_{i=1}^{M}\sum_{j=1}^{M}{n \choose i}{n-i \choose j}\mathbb{P}\left(V_{1,\cdots,i}^{x_{1}}\right)\mathbb{P}\left(V_{i+1,\cdots,i+j}^{x_{2}}\right)\mathbb{P}\left(X_{i+j+1},\dots,X_{n}\notin B_{x_{1},\frac{t}{n^{\frac{1}{d}}}}\cup B_{x_{2},\frac{t}{n^{\frac{1}{d}}}}\right)+p_{n}.
\end{split}
\]
To proceed from here, we will introduce two more notations. For all
$n\geq1$, define 
\[
\alpha_{n}:=\frac{\sum_{i=1}^{M}\sum_{j=1}^{M}{n \choose i}{n-i \choose j}\mathbb{P}\left(V_{1,\cdots,i}^{x_{1}}\right)\mathbb{P}\left(V_{i+1,\cdots,i+j}^{x_{2}}\right)\mathbb{P}\left(X_{i+j+1},\dots,X_{n}\notin B_{x_{1},\frac{t}{n^{\frac{1}{d}}}}\cup B_{x_{2},\frac{t}{n^{\frac{1}{d}}}}\right)}{\sum_{i=1}^{M}\sum_{j=1}^{M}{n \choose i}{n \choose j}\mathbb{P}\left(V_{1,\cdots,i}^{x_{1}}\right)\mathbb{P}\left(V_{i+1,\cdots,i+j}^{x_{2}}\right)\mathbb{P}\left(X_{i+j+1},\dots,X_{n}\notin B_{x_{1},\frac{t}{n^{\frac{1}{d}}}}\cup B_{x_{2},\frac{t}{n^{\frac{1}{d}}}}\right)}
\]
and 
\[
\beta_{n}:=\frac{\mu_{f}\left(B_{x_{1},\frac{t}{n^{\frac{1}{d}}}}\right)+\mu_{f}\left(B_{x_{2},\frac{t}{n^{\frac{1}{d}}}}\right)}{\left(f\left(x_{1}\right)+f\left(x_{2}\right)\right)\lambda\left(B_{0,1}\right)\frac{t^{d}}{n}}.
\]
The Lebesgue Differentiation Theorem immediately leads to $\lim_{n\rightarrow\infty}\beta_{n}=1$.
We claim that $\lim_{n\rightarrow\infty}\alpha_{n}=1$ as well. To
see this, we check that for every $i,j\in\left\{ 1,\cdots,M\right\} $,
\[
\frac{{n-i \choose j}}{{n \choose j}}=\frac{(n-i)(n-i-1)\cdots(n-i-j+1)}{n(n-1)\cdots(n-j+1)}\in\left[\frac{\left(n-2M+1\right)^{M}}{n^{M}},1\right]
\]
and hence 
\[
\frac{\left(n-2M+1\right)^{M}}{n^{M}}\sum_{i=1}^{M}\sum_{j=1}^{M}{n \choose i}{n \choose j}\leq\sum_{i=1}^{M}\sum_{j=1}^{M}{n \choose i}{n-i \choose j}\leq\sum_{i=1}^{M}\sum_{j=1}^{M}{n \choose i}{n \choose j}
\]
which implies that $\alpha_{n}\rightarrow1$ as $n\rightarrow\infty$.
Thus, we will further rewrite $F_{x_{1},x_{2}}^{n}\left(z_{1},z_{2}\right)$
as 
\begin{equation}
\begin{split} & F_{x_{1},x_{2}}^{n}\left(z_{1},z_{2}\right)\\
= & \alpha_{n}\sum_{i=1}^{M}\sum_{j=1}^{M}{n \choose i}{n \choose j}\mathbb{P}\left(V_{1,\cdots,i}^{x_{1}}\right)\mathbb{P}\left(V_{i+1,\cdots,i+j}^{x_{2}}\right)\mathbb{P}\left(X_{i+j+1},\dots,X_{n}\notin B_{x_{1},\frac{t}{n^{\frac{1}{d}}}}\cup B_{x_{2},\frac{t}{n^{\frac{1}{d}}}}\right)+p_{n}\\
= & \alpha_{n}\sum_{i=1}^{M}\sum_{j=1}^{M}{n \choose i}{n \choose j}\mathbb{P}\left(V_{1,\cdots,i}^{x_{1}}\right)\mathbb{P}\left(V_{1,\cdots,j}^{x_{2}}\right)\left[1-\beta_{n}\left(f\left(x_{1}\right)+f\left(x_{2}\right)\right)\lambda\left(B_{0,1}\right)\frac{t^{d}}{n}\right]^{n-i-j}+p_{n}.
\end{split}
\label{eq:joint DF after rewriting}
\end{equation}

We now examine the quantity $F_{x_{1}}^{n}\left(z_{1}\right)F_{x_{2}}^{n}\left(z_{2}\right)$.
Proceeding as above, for every $n\geq1$, $q=1,2$, we define
\[
\beta_{n}^{q}:=\frac{\mu\left(B_{x_{q},\frac{t}{n^{\frac{1}{d}}}}\right)}{f\left(x_{q}\right)\lambda\left(B_{0,1}\right)\frac{t^{d}}{n}},\;E_{n}^{q}:=\left\{ N_{\left\{ X_{1},\cdots,X_{n}\right\} }\left(B_{x_{q},\frac{t}{n^{\frac{1}{d}}}}\right)\leq M,D_{n}^{L}\left(x_{q}\right)\leq\frac{t}{4n^{\frac{1}{d}}}\right\} ,
\]
and 
\[
\begin{split}p_{n}^{\prime} & :=F_{x_{1}}^{n}\left(z_{1}\right)F_{x_{2}}^{n}\left(z_{2}\right)-\mathbb{P}\left(\left\{ n\mu_{f}\left(L_{n}\left(x_{1}\right)\right)\leq z_{1}\right\} \cap E_{n}^{1}\right)\cdot\mathbb{P}\left(\left\{ n\mu_{f}\left(L_{n}\left(x_{2}\right)\right)\leq z_{2}\right\} \cap E_{n}^{2}\right)\end{split}
.
\]
Then, the arguments above give that 
\[
\mathbb{P}\left(\left(E_{n}^{q}\right)^{\complement}\right)\leq\frac{\epsilon}{8},\,q=1,2,\text{ and hence }p_{n}^{\prime}\leq\frac{3}{8}\epsilon,
\]
as well as 
\[
\lim_{n\rightarrow\infty}\beta_{n}^{q}=1,\,q=1,2.
\]
So, we have that 
\begin{equation}
\begin{split} & F_{x_{1}}^{n}\left(z_{1}\right)F_{x_{2}}^{n}\left(z_{2}\right)\\
= & p_{n}^{\prime}+\mathbb{P}\left(\left\{ n\mu_{f}\left(L_{n}\left(x_{1}\right)\right)\leq z_{1}\right\} \cap E_{n}^{1}\right)\cdot\mathbb{P}\left(\left\{ n\mu_{f}\left(L_{n}\left(x_{2}\right)\right)\leq z_{2}\right\} \cap E_{n}^{2}\right)\\
= & p_{n}^{\prime}+\sum_{i=1}^{M}\sum_{j=1}^{M}{n \choose i}{n \choose j}\mathbb{P}\left(V_{1,\cdots,i}^{x_{1}}\right)\mathbb{P}\left(V_{1,\cdots,j}^{x_{2}}\right)\\
 & \hspace{3cm}\cdot\left[1-\beta_{n}^{1}f\left(x_{1}\right)\lambda\left(B_{0,1}\right)\frac{t^{d}}{n}\right]^{n-i}\left[1-\beta_{n}^{2}f\left(x_{2}\right)\lambda\left(B_{0,1}\right)\frac{t^{d}}{n}\right]^{n-j}.
\end{split}
\label{eq:marginal DFs after rewriting}
\end{equation}

Taking the difference between (\ref{eq:joint DF after rewriting})
and (\ref{eq:marginal DFs after rewriting}), we see that 
\[
\left|F_{x_{1},x_{2}}^{n}\left(z_{1},z_{2}\right)-F_{x_{1}}^{n}\left(z_{1}\right)F_{x_{2}}^{n}\left(z_{2}\right)\right|\leq\Phi_{1}+\Phi_{2}+\Phi_{3}+p_{n}+p_{n}^{\prime}
\]
where 
\[
\begin{split}\Phi_{1} & :=\left|\alpha_{n}-1\right|\sum_{i=1}^{M}\sum_{j=1}^{M}{n \choose i}{n \choose j}\mathbb{P}\left(V_{1,\cdots,i}^{x_{1}}\right)\mathbb{P}\left(V_{1,\cdots,j}^{x_{2}}\right)\left[1-\beta_{n}\left(f\left(x_{1}\right)+f\left(x_{2}\right)\right)\lambda\left(B_{0,1}\right)\frac{t^{d}}{n}\right]^{n-i-j},\end{split}
\]
\[
\begin{split}\Phi_{2} & :=\sum_{i=1}^{M}\sum_{j=1}^{M}{n \choose i}{n \choose j}\mathbb{P}\left(V_{1,\cdots,i}^{x_{1}}\right)\mathbb{P}\left(V_{1,\cdots,j}^{x_{2}}\right)\\
 & \hspace{3.7cm}\cdot\left|\left[1-\beta_{n}\left(f\left(x_{1}\right)+f\left(x_{2}\right)\right)\lambda\left(B_{0,1}\right)\frac{t^{d}}{n}\right]^{n-i-j}-e^{\left(f\left(x_{1}\right)+f\left(x_{2}\right)\right)\lambda\left(B_{0,1}\right)t^{d}}\right|
\end{split}
\]
and
\[
\begin{split}\Phi_{3} & :=\sum_{i=1}^{M}\sum_{j=1}^{M}{n \choose i}{n \choose j}\mathbb{P}\left(V_{1,\cdots,i}^{x_{1}}\right)\mathbb{P}\left(V_{1,\cdots,j}^{x_{2}}\right)\\
 & \hspace{1cm}\cdot\left|\left[1-\beta_{n}^{1}f\left(x_{1}\right)\lambda\left(B_{0,1}\right)\frac{t^{d}}{n}\right]^{n-i}\left[1-\beta_{n}^{2}f\left(x_{2}\right)\lambda\left(B_{0,1}\right)\frac{t^{d}}{n}\right]^{n-j}-e^{\left(f\left(x_{1}\right)+f\left(x_{2}\right)\right)\lambda\left(B_{0,1}\right)t^{d}}\right|.
\end{split}
\]
Therefore, the only thing left is to show that each of $\Phi_{1}$,
$\Phi_{2}$ and $\Phi_{3}$ is no greater than $\frac{\epsilon}{8}$
for all $n$ sufficiently large. We will bound all three of $\Phi_{1}$,
$\Phi_{2}$ and $\Phi_{3}$ at the same time. Since we already know
that as $n\rightarrow\infty$, $\alpha_{n}$, $\beta_{n}$, $\beta_{n}^{1}$
and $\beta_{n}^{2}$ all tend to 1, all we need to do is to bound
\[
\sum_{i=1}^{M}\sum_{j=1}^{M}{n \choose i}{n \choose j}\mathbb{P}\left(V_{1,\cdots,i}^{x_{1}}\right)\mathbb{P}\left(V_{1,\cdots,j}^{x_{2}}\right)=\left(\sum_{i=1}^{M}{n \choose i}\mathbb{P}\left(V_{1,\cdots,i}^{x_{1}}\right)\right)\left(\sum_{i=1}^{M}{n \choose i}\mathbb{P}\left(V_{1,\cdots,i}^{x_{2}}\right)\right)
\]
uniformly in $n$. To this end, we observe that for $q=1,2$,
\[
\begin{split}\limsup_{n\rightarrow\infty}\sum_{i=1}^{M}{n \choose i}\mathbb{P}\left(V_{1,\cdots,i}^{x_{q}}\right) & \leq\limsup_{n\rightarrow\infty}\sum_{i=1}^{M}{n \choose i}\left[\mu_{f}\left(B_{x_{q},\frac{t}{n^{\frac{1}{d}}}}\right)\right]^{i}\\
 & \leq\limsup_{n\rightarrow\infty}\sum_{i=1}^{M}{n \choose i}\left[\frac{3}{2}f\left(x_{q}\right)\lambda\left(B_{0,1}\right)\frac{t^{d}}{n}\right]^{i}\\
 & =\sum_{i=1}^{M}\frac{\left[\frac{3}{2}f\left(x_{q}\right)\lambda\left(B_{0,1}\right)t^{d}f\left(x_{q}\right)\right]^{i}}{i!}.
\end{split}
\]
It follows from there that all of $\Phi_{1}$, $\Phi_{2}$ and $\Phi_{3}$
can be made arbitrarily small, say, smaller than $\frac{\epsilon}{8}$,
so long as $n$ is sufficiently large. Finally we can conclude that
\[
\left|F_{x_{1},x_{2}}^{n}\left(z_{1},z_{2}\right)-F_{x_{1}}^{n}\left(z_{1}\right)F_{x_{2}}^{n}\left(z_{2}\right)\right|\leq\frac{3}{8}\epsilon+p_{n}+p_{n}^{\prime}\leq\epsilon,
\]
which proves (\ref{eq:target limit of joint DF v.s. marginal DFs})
and hence completes the proof of Theorem \ref{thm:asymptotic independence result of Section 4}.
\end{proof}

\section{Appendix}

Following the same notations as in previous sections, assume that
$f$ is a probability density function on $\mathbb{R}^{d}$, $x$
is a Lebesgue point of $f$ with $f\left(x\right)>0$ and $\mu_{f}$
is the probability distribution on $\mathbb{R}^{d}$ with $f$ being
the density function; for each $n\geq1$, let $\left\{ X_{1},\cdots,X_{n}\right\} $
be i.i.d. random variables with distribution $\mu_{f}$, and $\left\{ Y_{1},\cdots,Y_{n},\cdots\right\} $
be a homogeneous Poisson point process on $\mathbb{R}^{d}$ with parameter
$nf\left(x\right)$; finally denote by $A_{n}\left(x\right)$ the
Voronoi cell containing $x$ in the Voronoi diagram generated by $\left\{ x,X_{1},\cdots,X_{n}\right\} $,
$D_{n}^{A}\left(x\right)$ the diameter of $A_{n}\left(x\right)$,
$P_{n}\left(x\right)$ the Voronoi cell containing $x$ in the Voronoi
diagram generated by $\left\{ x,Y_{1},\cdots,Y_{n},\cdots\right\} $,
and $D_{n}^{P}\left(x\right)$ the diameter of $P_{n}\left(x\right)$.
\begin{lem}
\label{lem:Appendix control on diameter of A_n(x) and P_n(x)} Following
the notations introduced above, we have that there exist universal
constants $c_{1},c_{2}>0$ such that $\forall t>0$, 
\[
\limsup\limits _{n\rightarrow\infty}\mathbb{P}\left(D_{n}^{A}\left(x\right)\geq\frac{t}{n^{\frac{1}{d}}}\right)\leq c_{1}e^{-c_{2}f\left(x\right)t^{d}}
\]
and
\[
\limsup\limits _{n\rightarrow\infty}\mathbb{P}\left(D_{n}^{P}\left(x\right)\geq\frac{t}{n^{\frac{1}{d}}}\right)\leq c_{1}e^{-c_{2}f\left(x\right)t^{d}}.
\]
\end{lem}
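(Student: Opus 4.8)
The plan is to rerun the cone‑covering argument used for Theorem~\ref{thm:diameter control on L_n(x), first result in section 3}, but in a simpler form. Since $x$ is the \emph{nucleus} of $A_n(x)$ (and of $P_n(x)$), there is no auxiliary nucleus to keep track of, so the three‑section cone estimate of Lemma~\ref{lem:Appendix cone arguments on range of cell} degenerates to a one‑section statement, and the diameter of the cell is controlled purely by whether each cone around $x$ captures a point of the generating process close enough to $x$.

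First I would record the elementary geometric fact: if two vectors $z-x$ and $p-x$ lie in a common cone with apex $x$ whose half‑angle $\theta$ satisfies $\cos(2\theta)>\frac12$, and $0<\|p-x\|\le\|z-x\|$, then $\|z-p\|<\|z-x\|$; this is immediate from the law of cosines, since the angle between $z-x$ and $p-x$ is at most $2\theta$. Fix a minimal family $C_1,\dots,C_{\gamma_d}$ of such cones with apex $x$ whose union covers $\mathbb{R}^d$, where $\gamma_d$ depends only on $d$, and set $r_n:=\frac{t}{2n^{1/d}}$. The key claim is that on the event
\[
\Gamma_n:=\bigcap_{i=1}^{\gamma_d}\bigl\{C_i\cap B_{x,r_n}\cap\{X_1,\dots,X_n\}\neq\emptyset\bigr\},
\]
one has $A_n(x)\subseteq B_{x,r_n}$, and similarly, with $\{X_1,\dots,X_n\}$ replaced by $P_{nf(x)}$ in the definition of $\Gamma_n$, one gets $P_n(x)\subseteq B_{x,r_n}$. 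Indeed, any $z$ with $\|z-x\|\ge r_n$ lies in some $C_{i_0}$, the process point $p_{i_0}\in C_{i_0}\cap B_{x,r_n}$ satisfies $\|p_{i_0}-x\|<r_n\le\|z-x\|$, and the geometric fact gives $\|z-p_{i_0}\|<\|z-x\|$, so $z$ is strictly closer to $p_{i_0}$ than to $x$ and hence $z\notin A_n(x)$. Since $A_n(x)$ is a closed set (an intersection of closed half‑spaces) contained in the open ball $B_{x,r_n}$, it is compact, so $\sup_{z\in A_n(x)}\|z-x\|<r_n$ and therefore $D_n^A(x)<\frac{t}{n^{1/d}}$ on $\Gamma_n$; the same holds for $D_n^P(x)$.

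It follows that $\mathbb{P}\bigl(D_n^A(x)\ge\frac{t}{n^{1/d}}\bigr)\le\mathbb{P}(\Gamma_n^{\complement})\le\sum_{i=1}^{\gamma_d}\bigl(1-\mu_f(C_i\cap B_{x,r_n})\bigr)^n$. Since $r_n\to0$ and $x$ is a Lebesgue point of $f$ with $f(x)>0$, the generalized Lebesgue Differentiation Theorem gives, for all $n$ sufficiently large, $\mu_f(C_i\cap B_{x,r_n})\ge\frac12 f(x)\lambda(C_i\cap B_{x,r_n})=\frac12 f(x)\kappa_i r_n^{\,d}=\frac{f(x)\kappa_i t^d}{2^{d+1}n}\ge\frac{c_2 f(x)t^d}{n}$, where $\kappa_i:=\lambda(C_i\cap B_{x,1})>0$ and $c_2:=2^{-d-1}\min_i\kappa_i$ depends only on $d$. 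Hence $\limsup_{n\to\infty}\mathbb{P}\bigl(D_n^A(x)\ge\frac{t}{n^{1/d}}\bigr)\le\limsup_{n\to\infty}\gamma_d\bigl(1-\frac{c_2 f(x)t^d}{n}\bigr)^n=\gamma_d e^{-c_2 f(x)t^d}$, which is the first bound with $c_1:=\gamma_d$. For $P_n(x)$ the same reasoning applies with $N_{P_{nf(x)}}(C_i\cap B_{x,r_n})$ a Poisson random variable of mean $nf(x)\lambda(C_i\cap B_{x,r_n})=f(x)\kappa_i 2^{-d}t^d$, so $\mathbb{P}(\Gamma_n^{\complement})\le\sum_{i=1}^{\gamma_d}e^{-f(x)\kappa_i 2^{-d}t^d}\le\gamma_d e^{-c_2' f(x)t^d}$ with $c_2':=2^{-d}\min_i\kappa_i$; this bound is in fact independent of $n$. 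Replacing $c_2$ by $\min\{c_2,c_2'\}$ and keeping $c_1=\gamma_d$ yields both inequalities simultaneously.

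There is no real obstacle here; the argument is a strict simplification of the proof of Theorem~\ref{thm:diameter control on L_n(x), first result in section 3}. The two points deserving care are: (i) choosing the cone half‑angle small enough that $\cos(2\theta)>\frac12$, which is exactly what makes the one‑section exclusion step valid; and (ii) checking that $c_1,c_2$ are genuinely universal, i.e.\ depend on $d$ only (through $\gamma_d$ and the sector constants $\kappa_i$) — the $f(x)$‑dependence is already displayed explicitly, and the Lebesgue‑point hypothesis enters solely through the comparison $\mu_f\ge\frac12 f(x)\lambda$ on small balls and sectors. Finally, the $\limsup$ (rather than $\sup$ or $\lim$) in the $\{X_1,\dots,X_n\}$ case is needed precisely because this comparison is only available for $n$ large, whereas in the Poisson case the bound holds exactly for every $n$.
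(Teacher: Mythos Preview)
Your proof is correct. The paper itself does not give a proof of this lemma: it cites Theorem~5.1 of Devroye \emph{et al.}\ for the first inequality and asserts that the Poisson case follows by ``minor adjustments,'' so there is no in-paper argument to compare against. Your cone-covering argument is precisely the standard one underlying that cited result, and, as you observe, it is a strict simplification of the paper's own proof of Theorem~\ref{thm:diameter control on L_n(x), first result in section 3}: because $x$ is itself the nucleus, the three-section cone estimate of Lemma~\ref{lem:Appendix cone arguments on range of cell} collapses to the elementary law-of-cosines fact that a sample point in the same narrow cone and closer to $x$ than $z$ already excludes $z$ from the cell. Your handling of both cases---binomial via the Lebesgue-point comparison $\mu_f\ge\tfrac12 f(x)\lambda$ on small sectors, and Poisson via the exact void probability---is clean, and your remark that the $\limsup$ is needed only in the binomial case (the Poisson bound being exact for every $n$) is on point.
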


The first statement is exactly Theorem 5.1 of \cite{Devroye2015}.
The second statement is similar and can be proven by making only minor
adjustments to the proof of the first statement, so we will omit the
details here.

Further, for each $n\geq1$, let $L_{n}\left(x\right)$ be the Voronoi
cell containing $x$ in the Voronoi diagram generated by $\left\{ X_{1},\cdots,X_{n}\right\} $
and $D_{n}^{L}\left(x\right)$ be the diameter of $L_{n}\left(x\right)$.
Then we have the following fact.
\begin{lem}
\label{lem:Appendix range of nuclei relevant to configuration of cell }
Let $t>0$. Suppose that $D_{n}^{A}\left(x\right)\leq\frac{t}{2}$
(alternatively $D_{n}^{P}\left(x\right)\leq\frac{t}{2}$). Then, 
\[
X_{i}\notin B_{x,t}\text{ and }z\in B_{x,\frac{t}{2}}\implies\left\Vert z-x\right\Vert <\left\Vert z-X_{i}\right\Vert .
\]
Similarly, if $D_{n}^{L}\left(x\right)\leq\frac{t}{4}$ and $y$ is
the nucleus of $L_{n}\left(x\right)$, then 
\[
X_{i}\notin B_{x,t}\text{ and }z\in B_{x,\frac{t}{4}}\implies\left\Vert z-y\right\Vert <\left\Vert z-X_{i}\right\Vert .
\]
In particular, we conclude that under the above restrictions on the
diameter, sample points that fall outside of $B_{x,t}$ do not effect
the shape of the cell under consideration. 
\end{lem}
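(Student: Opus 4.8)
The plan is to obtain both implications from the triangle inequality alone, and to extract the ``in particular'' statement from the observation that a far-away sample point can constrain a Voronoi cell only at locations that are already far from $x$. First I would prove the first implication, which in fact does not even use the diameter hypothesis: given $X_i\notin B_{x,t}$ and $z\in B_{x,t/2}$ we have $\|X_i-x\|\ge t$ and $\|x-z\|<t/2$, so
\[
\|z-X_i\|\ \ge\ \|X_i-x\|-\|x-z\|\ \ge\ t-\|x-z\|\ >\ \|x-z\|\ =\ \|z-x\|,
\]
the last step being just $2\|x-z\|<t$; the $P_n(x)$ version is word-for-word the same with $P_{nf(x)}$ in place of $\{X_1,\dots,X_n\}$. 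For the second implication I would first locate the nucleus $y$ of $L_n(x)$: a nucleus always lies in its own cell, so $x,y\in L_n(x)$ and hence $\|x-y\|\le D_n^L(x)\le t/4$ (this is the only use of the diameter hypothesis). Then for $X_i\notin B_{x,t}$ and $z\in B_{x,t/4}$,
\[
\|z-y\|\ \le\ \|z-x\|+\|x-y\|\ <\ \frac{t}{4}+\frac{t}{4}\ =\ \frac{t}{2}
\quad\text{and}\quad
\|z-X_i\|\ \ge\ \|X_i-x\|-\|z-x\|\ >\ t-\frac{t}{4}\ =\ \frac{3t}{4},
\]
and $t/2<3t/4$ gives $\|z-y\|<\|z-X_i\|$.

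For the ``in particular'' clause I would argue via radial extents. Write $A_n(x)=\bigcap_{j=1}^{n}H_j$, where $H_j=\{z:\|z-x\|\le\|z-X_j\|\}$ is a closed half-space containing $x$. If $X_j\notin B_{x,t}$ then the boundary of $H_j$ is the perpendicular bisector of $x$ and $X_j$, whose point nearest $x$ is at distance $\|x-X_j\|/2\ge t/2$; equivalently $B_{x,t/2}\subseteq H_j$ (this is the first implication). Hence along any ray from $x$, a half-space coming from a point outside $B_{x,t}$ can cut the ray only beyond distance $t/2$ from $x$. Now, under $D_n^A(x)\le t/2$ the cell $A_n(x)$ contains $x$ and has diameter at most $t/2$, so $A_n(x)\subseteq\overline{B_{x,t/2}}$; thus on every ray from $x$ the cell is already cut off within distance $t/2$, necessarily by a half-space $H_j$ with $X_j\in B_{x,t}$. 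Consequently, deleting the half-spaces associated to the points outside $B_{x,t}$ changes the radial extent of $\bigcap_j H_j$ in no direction, and a closed convex set containing $x$ is determined by its radial extent function, so the cell is unchanged. The same reasoning handles $P_n(x)$, and, with $y$ playing the role of the centre and the $t/4$-ball that of the $t/2$-ball, also $L_n(x)$.

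I expect the only genuinely fussy point to be the boundary bookkeeping in this last step: a far point at distance exactly $t$ pointing exactly along a given ray could tie with the cut-off on $\partial B_{x,t/2}$, so the cleanest write-up would either impose the strict inequality $D_n^A(x)<t/2$ (which is all that is needed in the applications) or remark that such ties, being a measure-zero event under a density, do not change the cell. Everything else follows directly from the triangle inequality, so no substantial difficulty is anticipated.
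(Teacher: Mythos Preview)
Your proof of the two displayed implications is correct and is essentially the paper's own argument: both are one-line applications of the triangle inequality, and the paper's proof consists of exactly those two chains of inequalities (it also leaves implicit, as you make explicit, that $\|x-y\|\le D_n^L(x)\le t/4$ because both $x$ and $y$ lie in $L_n(x)$). Your observation that the first implication does not actually use the diameter hypothesis is accurate; the paper's proof does not use it there either.

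Where you go beyond the paper is the ``in particular'' clause: the paper states it as a remark and gives no separate argument, whereas you supply a radial-extent justification. Your argument is sound in the strict-inequality regime, and your diagnosis of the only delicate point is exactly right: when $D_n^A(x)=t/2$ on the nose and some $X_j$ sits at distance exactly $t$ in the direction realizing that extent, the far point could be the unique active constraint along that ray, so deleting it would enlarge $A'$. Your two proposed remedies (pass to a strict inequality, or note that under a density such ties are a null event) are both adequate for every place the lemma is invoked in the paper. A slightly cleaner way to package the deterministic statement, should you want one, is: for $X_j\notin B_{x,t}$ one has $\overline{B_{x,t/2}}\subseteq H_j$, hence $A_n(x)=A'\cap\bigcap_{j\in S^c}H_j$ and $A_n(x)\subseteq\overline{B_{x,t/2}}$ together give $A'\cap\overline{B_{x,t/2}}=A_n(x)$; thus $A'=A_n(x)$ whenever $A'\subseteq\overline{B_{x,t/2}}$, which follows from $r(v)<t/2$ for all $v$ and hence from $D_n^A(x)<t/2$.
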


\begin{proof}
Let $z\in B_{x,\frac{t}{2}}$. Then, 
\[
\left\Vert z-x\right\Vert <\frac{t}{2}<\left\Vert X_{i}-x\right\Vert -\left\Vert x-z\right\Vert \leq\left\Vert X_{i}-z\right\Vert .
\]
Similarly, let $z\in B_{x,\frac{t}{4}}$. Then, 
\[
\left\Vert z-y\right\Vert \leq\left\Vert z-x\right\Vert +\left\Vert x-y\right\Vert <\frac{t}{2}<\left\Vert X_{i}-x\right\Vert -\left\Vert z-x\right\Vert \leq\left\Vert X_{i}-z\right\Vert .
\]
\end{proof}
\begin{lem}
\label{lem:Appendix cone arguments on range of cell} Let $\alpha>0$,
$x\in\mathbb{R}^{d}$, and $C\subseteq\mathbb{R}^{d}$ be any cone
of angle $\frac{\pi}{12}$ and with origin at $x$ ($C$ does not
contain $x$), i.e., 
\[
C:=\left\{ y\in\mathbb{R}^{d}\backslash\left\{ x\right\} :\frac{\left(v,y-x\right)_{\mathbb{R}^{d}}}{\left\Vert y-x\right\Vert }\geq\cos\left(\frac{\pi}{24}\right)\right\} \text{, for some }v\in\mathbb{R}^{d}\text{ with }\left\Vert v\right\Vert =1.
\]
Let $R_{1}=\frac{1}{64}\alpha$, $R_{2}=\frac{1+31\cos\left(\frac{\pi}{6}\right)}{64\cos\left(\frac{\pi}{12}\right)}\alpha$,
and $R_{3}=\frac{30}{64}\alpha$. Then, for any $p,y,z\in C$, if
\[
0<\left\Vert y-x\right\Vert <R_{1},\ R_{2}\leq\left\Vert p-x\right\Vert <R_{3},\text{ and }\left\Vert x-z\right\Vert \geq\frac{\alpha}{2},
\]
then we must have 
\[
\left\Vert z-p\right\Vert <\left\Vert z-y\right\Vert .
\]
\end{lem}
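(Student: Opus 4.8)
The plan is to pass to coordinates and reduce the geometric claim to a one-variable comparison of two parabolas. Translating if necessary, assume $x=0$ and set $a:=\left\Vert y\right\Vert$, $b:=\left\Vert p\right\Vert$, $c:=\left\Vert z\right\Vert$, so that $0<a<R_{1}$, $R_{2}\le b<R_{3}$, and $c\ge\frac{\alpha}{2}$. Since $C$ has half-angle $\frac{\pi}{24}$ about its axis, any two of the vectors $y,p,z$ subtend an angle of at most $\frac{\pi}{12}$ at the origin (only $p$ and $z$ will actually be needed), whence $\langle z,p\rangle\ge cb\cos(\pi/12)$, while Cauchy--Schwarz gives $\langle z,y\rangle\le ca$. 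Expanding squared norms,
\[
\left\Vert z-y\right\Vert^{2}-\left\Vert z-p\right\Vert^{2}=2\langle z,p\rangle-2\langle z,y\rangle+a^{2}-b^{2}\ge 2c\,(b\cos(\pi/12)-a)-(b^{2}-a^{2}),
\]
so it suffices to show that the right-hand side is strictly positive.

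First I would observe that $b\cos(\pi/12)-a>0$: since $b\ge R_{2}$ and $R_{2}\cos(\pi/12)=\frac{1+31\cos(\pi/6)}{64}\alpha$, we get $b\cos(\pi/12)\ge\frac{1+31\cos(\pi/6)}{64}\alpha>\frac{\alpha}{64}>a$. Because this quantity is positive and $c\ge\frac{\alpha}{2}$, one may replace $2c$ by $\alpha$, reducing the goal to $g(b)<h(a)$, where $g(s):=s^{2}-\alpha s\cos(\pi/12)$ and $h(s):=s^{2}-\alpha s$. Both are upward parabolas. The vertex of $g$ lies at $\frac{\alpha}{2}\cos(\pi/12)$, which is strictly to the right of $R_{3}=\frac{30}{64}\alpha$ (this is the inequality $\cos(\pi/12)>\frac{15}{16}$, which follows from $\cos^{2}(\pi/12)=\frac{1+\cos(\pi/6)}{2}=\frac{2+\sqrt{3}}{4}$ together with $2+\sqrt{3}>\frac{225}{64}$); hence $g$ is decreasing on $[R_{2},R_{3}]$ and $g(b)\le g(R_{2})$. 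Likewise the vertex of $h$ is at $\frac{\alpha}{2}>R_{1}$, so $h$ is decreasing on $(0,R_{1}]$ and $h(a)>h(R_{1})$.

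It then remains to check the constant inequality $g(R_{2})<h(R_{1})$. Using the half-angle identity, $R_{2}-\alpha\cos(\pi/12)=-\frac{31+\cos(\pi/6)}{64\cos(\pi/12)}\alpha$, so $g(R_{2})=R_{2}\,(R_{2}-\alpha\cos(\pi/12))=-\frac{(1+31\cos(\pi/6))(31+\cos(\pi/6))}{2048\,(1+\cos(\pi/6))}\alpha^{2}$, while $h(R_{1})=R_{1}(R_{1}-\alpha)=-\frac{63}{4096}\alpha^{2}$; the desired inequality thus reduces to $(1+31\cos(\pi/6))(31+\cos(\pi/6))>\frac{63}{2}(1+\cos(\pi/6))$, which holds with a huge margin since the left side exceeds $31^{2}\cos(\pi/6)>800$ and the right side is below $63$. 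Chaining $g(b)\le g(R_{2})<h(R_{1})<h(a)$ yields $\left\Vert z-y\right\Vert^{2}-\left\Vert z-p\right\Vert^{2}\ge h(a)-g(b)>0$, which is exactly the assertion. I do not expect a genuine obstacle here: the geometry is just ``one point near the apex, one point farther out, both inside a narrow cone,'' and the whole difficulty is bookkeeping — organizing the decomposition so that the two vertex comparisons and the final numerical inequality are transparent. Every inequality used holds with wide slack, the bound being tight only in the degenerate configuration $c=\frac{\alpha}{2}$, $b=R_{2}$, $z\parallel y$, with $p$ and $z$ on opposite edges of $C$.
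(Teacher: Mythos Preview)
Your proof is correct and takes a genuinely different route from the paper's. The paper first reduces to $d=2$ (by replacing $y$ with the point on the ray $xz$ at distance $\|y-x\|$, then applying Gram--Schmidt), then uses the law of cosines twice: once to show that the angle $\angle xyp$ is at least $5\pi/6$, and once (by contradiction) to deduce $\|z-x\|<\alpha/2$ from $\|z-y\|\le\|z-p\|$. Your argument stays in $\mathbb{R}^{d}$ throughout, expands $\|z-y\|^{2}-\|z-p\|^{2}$ via inner products, and reduces everything to the monotonicity of two explicit quadratics on the relevant intervals.

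What each buys: the paper's approach is more pictorial and exploits the specific value of $R_{2}$ to get a clean angle threshold ($\phi\ge 5\pi/6$); yours is more systematic and makes the slack completely transparent. Your proof also happens not to use the hypothesis $y\in C$ at all (Cauchy--Schwarz on $\langle z,y\rangle$ suffices), which is a minor strengthening. The only place one might want a word more is the step replacing $2c$ by $\alpha$: you correctly note this is licit because the coefficient $b\cos(\pi/12)-a$ is positive, but since the verification of that sign is what makes the monotone replacement valid, it is worth flagging as the hinge of the argument.
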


\begin{proof}
We claim that it is enough to prove this lemma in the case $d=2$.
First, note that by translation we may assume that $x=0$. Then, let
$y^{\prime}$ be the point on the line segment connecting $0$ and
$z$ such that $\left\Vert y^{\prime}\right\Vert =\left\Vert y\right\Vert $.
It should be clear that $\left\Vert y^{\prime}-z\right\Vert \leq\left\Vert y-z\right\Vert $.
Hence, without loss of generality we may assume that $y=y^{\prime}$.
Use the Gram-Schmidt process to complete $\left\{ p,y\right\} $ to
an orthonormal basis of $\mathbb{R}^{d}$ and consider the problem
in this basis. Since the Euclidean inner product is invariant under
orthogonal transformations, both the Euclidean norm and the cone,
$C$, will be preserved by this transformation. Additionally, by the
use of Gram-Schmidt, we will have that in the new basis $p=\left(p_{1},0,\cdots,0\right)$,
$y=\left(y_{1},y_{2},0,\cdots,0\right)$ and $z=\left(z_{1},z_{2},0,\cdots,0\right)$
for some $p_{1},y_{1},y_{2},z_{1},z_{2}\in\mathbb{R}$. Thus, we see
that we may assume that $d=2$.

\begin{figure}[H]

\includegraphics[scale=0.35]{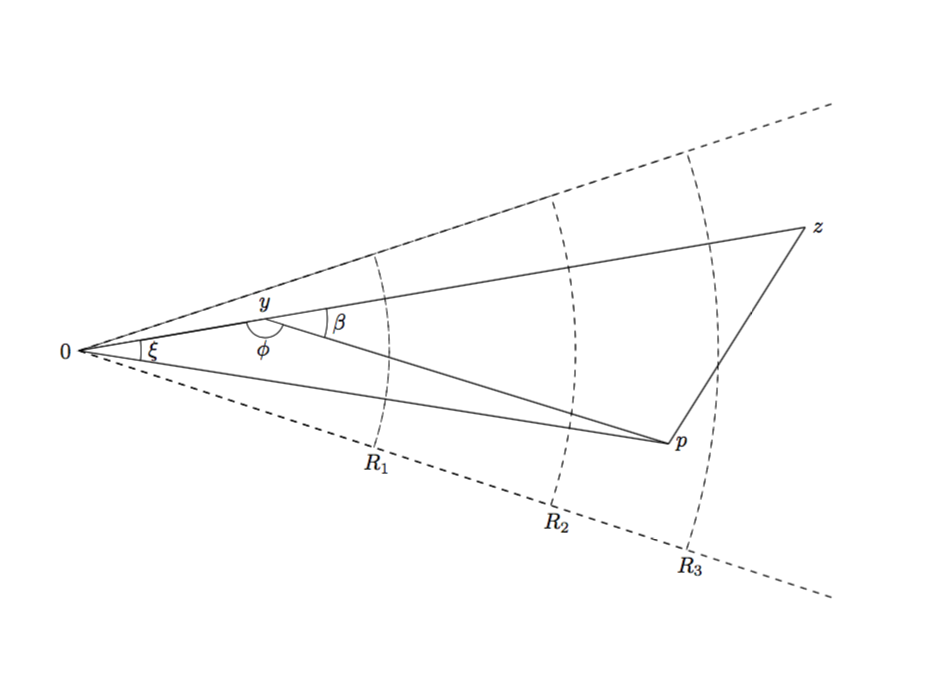}\caption{Diagram of the setting under study in Lemma \ref{lem:Appendix cone arguments on range of cell}. }

\end{figure}

Figure 5.1 outlines the current setting. First we remark that
\[
\begin{split}\left\Vert y-p\right\Vert ^{2} & =\left\Vert x-y\right\Vert ^{2}+\left\Vert x-p\right\Vert ^{2}-2\left\Vert x-y\right\Vert \left\Vert x-p\right\Vert \cos\left(\xi\right)\\
 & \leq\left\Vert x-y\right\Vert ^{2}+\left\Vert x-p\right\Vert ^{2}-2\left\Vert x-y\right\Vert \left\Vert x-p\right\Vert \cos\left(\frac{\pi}{12}\right).
\end{split}
\]
Moreover, 
\[
\begin{split}\left\Vert x-p\right\Vert ^{2} & =\left\Vert x-y\right\Vert ^{2}+\left\Vert y-p\right\Vert ^{2}-2\left\Vert x-y\right\Vert \left\Vert y-p\right\Vert \cos\left(\phi\right)\\
 & \leq2\left\Vert x-y\right\Vert ^{2}+\left\Vert x-p\right\Vert ^{2}-2\left\Vert x-y\right\Vert \left\Vert x-p\right\Vert \cos\left(\frac{\pi}{12}\right)-2\left\Vert x-y\right\Vert \left\Vert y-p\right\Vert \cos\left(\phi\right).
\end{split}
\]
Rewriting the inequality above gives
\[
\cos\left(\phi\right)\leq\frac{\left\Vert x-y\right\Vert -\left\Vert x-p\right\Vert \cos\left(\frac{\pi}{12}\right)}{\left\Vert y-p\right\Vert }\leq\frac{R_{1}-R_{2}\cos\left(\frac{\pi}{12}\right)}{R_{3}+R_{1}}.
\]
Plugging in the values of $R_{1}$, $R_{2}$ and $R_{3}$, we get
that $\phi\geq\frac{5\pi}{6}$ and $\beta=\pi-\phi\leq\frac{\pi}{6}$. 

Now assume the conclusion fails, i.e., $\left\Vert z-y\right\Vert \leq\left\Vert z-p\right\Vert $.
Then, 
\[
\begin{split}\left\Vert z-p\right\Vert ^{2} & =\left\Vert z-y\right\Vert ^{2}+\left\Vert y-p\right\Vert ^{2}-2\left\Vert z-y\right\Vert \left\Vert y-p\right\Vert \cos\left(\beta\right)\\
 & \leq\left\Vert z-y\right\Vert ^{2}+\left\Vert y-p\right\Vert ^{2}-2\left\Vert z-y\right\Vert \left\Vert y-p\right\Vert \cos\left(\frac{\pi}{6}\right)\\
 & \leq\left\Vert z-p\right\Vert ^{2}+\left\Vert y-p\right\Vert ^{2}-2\left\Vert z-y\right\Vert \left\Vert y-p\right\Vert \cos\left(\frac{\pi}{6}\right),
\end{split}
\]
and so
\[
\left\Vert z-y\right\Vert \leq\frac{\left\Vert y-p\right\Vert }{2\cos\left(\frac{\pi}{6}\right)}\leq\frac{R_{1}+R_{3}}{2\cos\left(\frac{\pi}{6}\right)}
\]
which implies that 
\[
\left\Vert z-x\right\Vert \leq\left\Vert z-y\right\Vert +\left\Vert y-x\right\Vert \leq\frac{R_{1}+R_{3}}{2\cos\left(\frac{\pi}{6}\right)}+R_{1}<\frac{\alpha}{2}.
\]
This contradicts the assumption that $\left\Vert z-x\right\Vert \geq\frac{\alpha}{2}$
and thus concludes the proof. 
\end{proof}
\begin{lem}
\label{lem:Appendix Lebesgue density theorem} Let $f$ be a probability
density function on $\mathbb{R}^{d}$ and $x$ be a Lebesgue point
of $f$ such that $f\left(x\right)>0$. Then, $\forall\epsilon\in(0,1)$,
$\exists\delta>0$, such that $\forall\eta\in\left[0,\delta\right]$,
\[
\int_{B_{x,\eta}}\left|\frac{f\left(u\right)}{\mu_{f}\left(B_{x,\eta}\right)}-\frac{1}{\lambda\left(B_{x,\eta}\right)}\right|du\leq\epsilon.
\]
\end{lem}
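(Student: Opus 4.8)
The plan is to reduce the claimed bound directly to the defining property of a Lebesgue point. Write $\bar{f}_{\eta}:=\mu_{f}(B_{x,\eta})/\lambda(B_{x,\eta})$ for the average value of $f$ over $B_{x,\eta}$, and observe that
\[
\frac{f(u)}{\mu_{f}(B_{x,\eta})}-\frac{1}{\lambda(B_{x,\eta})}=\frac{1}{\mu_{f}(B_{x,\eta})}\left(f(u)-\bar{f}_{\eta}\right),
\]
so that the integral in the statement equals $\mu_{f}(B_{x,\eta})^{-1}\int_{B_{x,\eta}}\left|f(u)-\bar{f}_{\eta}\right|du$. The first task is therefore to control this numerator, and the second is to control the denominator from below; combining the two yields the claim.

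For the numerator, I would use the triangle inequality $\left|f(u)-\bar{f}_{\eta}\right|\leq\left|f(u)-f(x)\right|+\left|f(x)-\bar{f}_{\eta}\right|$ together with the elementary estimate
\[
\left|f(x)-\bar{f}_{\eta}\right|=\left|\frac{1}{\lambda(B_{x,\eta})}\int_{B_{x,\eta}}\left(f(x)-f(u)\right)du\right|\leq\frac{1}{\lambda(B_{x,\eta})}\int_{B_{x,\eta}}\left|f(u)-f(x)\right|du.
\]
Integrating over $B_{x,\eta}$ then gives $\int_{B_{x,\eta}}\left|f(u)-\bar{f}_{\eta}\right|du\leq2\int_{B_{x,\eta}}\left|f(u)-f(x)\right|du$. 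By the hypothesis that $x$ is a Lebesgue point of $f$, the quantity $\lambda(B_{x,\eta})^{-1}\int_{B_{x,\eta}}\left|f(u)-f(x)\right|du$ tends to $0$ as $\eta\rightarrow0$; in particular $\bar{f}_{\eta}\rightarrow f(x)$ and $\int_{B_{x,\eta}}\left|f(u)-f(x)\right|du=o\bigl(\lambda(B_{x,\eta})\bigr)$.

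For the denominator, since $f(x)>0$ and $\bar{f}_{\eta}\rightarrow f(x)$, there is $\delta_{0}>0$ such that $\bar{f}_{\eta}\geq f(x)/2$ for all $\eta\in(0,\delta_{0}]$, whence $\mu_{f}(B_{x,\eta})=\bar{f}_{\eta}\lambda(B_{x,\eta})\geq\tfrac{1}{2}f(x)\lambda(B_{x,\eta})>0$ and the ratio in the statement is well defined. Putting the two estimates together, for every $\eta\in(0,\delta_{0}]$ we obtain
\[
\int_{B_{x,\eta}}\left|\frac{f(u)}{\mu_{f}(B_{x,\eta})}-\frac{1}{\lambda(B_{x,\eta})}\right|du\leq\frac{2\int_{B_{x,\eta}}\left|f(u)-f(x)\right|du}{\tfrac{1}{2}f(x)\lambda(B_{x,\eta})}=\frac{4}{f(x)}\cdot\frac{\int_{B_{x,\eta}}\left|f(u)-f(x)\right|du}{\lambda(B_{x,\eta})},
\]
and the right-hand side tends to $0$ as $\eta\rightarrow0$. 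It then suffices to pick $\delta\in(0,\delta_{0}]$ small enough that this bound is at most $\epsilon$ for all $\eta\in(0,\delta]$; the case $\eta=0$ is vacuous since $B_{x,0}=\emptyset$. The only point requiring genuine care is the lower bound on $\mu_{f}(B_{x,\eta})$, which is needed both to make the division legitimate and to make the estimate uniform over $\eta\leq\delta$; the remainder is a routine unwinding of the Lebesgue point condition.
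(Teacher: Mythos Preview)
Your proof is correct and follows essentially the same route as the paper's: both factor the integrand as $\mu_{f}(B_{x,\eta})^{-1}\bigl(f(u)-\bar f_{\eta}\bigr)$, split $|f(u)-\bar f_{\eta}|$ via $f(x)$ using the triangle inequality, and control the denominator $\mu_{f}(B_{x,\eta})$ through $\bar f_{\eta}\to f(x)>0$. The only difference is cosmetic bookkeeping---the paper tracks explicit $\epsilon/3$ constants to land exactly at $\epsilon$, while you obtain the clean bound $\tfrac{4}{f(x)}\lambda(B_{x,\eta})^{-1}\int_{B_{x,\eta}}|f(u)-f(x)|\,du$ and then shrink $\delta$.
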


\begin{proof}
By the generalized Lebesgue Differentiation Theorem (see, e.g., Theorem
20.19 of \cite{Devroye2015}) we may choose $\delta>0$ such that
$\forall\eta\leq\delta$, 
\[
\frac{1}{\lambda\left(B_{x,\eta}\right)}\int_{B_{x,\eta}}\left|f\left(u\right)-f\left(x\right)\right|du\leq\frac{\epsilon f\left(x\right)}{3},
\]
\[
\left|\frac{\mu_{f}\left(B_{x,\eta}\right)}{\lambda\left(B_{x,\eta}\right)}-f\left(x\right)\right|\leq\frac{\epsilon f\left(x\right)}{3},\text{ and }\left|\frac{\lambda\left(B_{x,\eta}\right)}{\mu_{f}\left(B_{x,\eta}\right)}-\frac{1}{f\left(x\right)}|\right|\leq\frac{\epsilon}{3f\left(x\right)}.
\]
Then, 
\[
\begin{split} & \int_{B_{x,\eta}}\left|\frac{f\left(u\right)}{\mu_{f}\left(B_{x,\eta}\right)}-\frac{1}{\lambda\left(B_{x,\eta}\right)}\right|du\\
= & \frac{1}{\mu_{f}\left(B_{x,\eta}\right)}\int_{B_{x,\eta}}\left|f\left(u\right)-\frac{\mu_{f}\left(B_{x,\eta}\right)}{\lambda\left(B_{x,\eta}\right)}\right|du\\
= & \frac{\lambda\left(B_{x,\eta}\right)}{\mu_{f}\left(B_{x,\eta}\right)}\frac{1}{\lambda\left(B_{x,\eta}\right)}\int_{B_{x,\eta}}\left|f\left(u\right)-f\left(x\right)\right|du+\frac{\lambda\left(B_{x,\eta}\right)}{\mu_{f}\left(B_{x,\eta}\right)}\left|f\left(x\right)-\frac{\mu_{f}\left(B_{x,\eta}\right)}{\lambda\left(B_{x,\eta}\right)}\right|\\
\leq & \left(\frac{1}{f\left(x\right)}+\frac{\epsilon}{3f\left(x\right)}\right)\frac{\epsilon f\left(x\right)}{3}+\left(\frac{1}{f\left(x\right)}+\frac{\epsilon}{3f\left(x\right)}\right)\frac{\epsilon f\left(x\right)}{3}\leq\epsilon.
\end{split}
\]
\end{proof}
\begin{lem}
\label{lem: Appendix Chernoff's-bound} (Chernoff's bound \cite{Chernoff1952}).
Let $Z$ be a random variable with the Bin$\left(n,p\right)$ distribution
with parameters $n$ and $p\in(0,1)$. For every $t>0$, set 
\[
\phi\left(t\right):=t-np-t\ln\left(\frac{t}{np}\right).
\]
Then, 
\[
\mathbb{P}\left(Z\geq t\right)\leq e^{\phi\left(t\right)}\text{, for }t\geq np
\]
and 
\[
\mathbb{P}\left(Z\leq t\right)\leq e^{\phi\left(t\right)}\text{, for }0<t\leq np.
\]
\end{lem}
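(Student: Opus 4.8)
The plan is to prove this by the classical exponential moment method (Chernoff's method). First I would write $Z=\sum_{i=1}^{n}B_{i}$ as a sum of i.i.d. Bernoulli$(p)$ random variables, so that for every real $s$ the moment generating function factorizes as $\mathbb{E}[e^{sZ}]=\left(\mathbb{E}[e^{sB_{1}}]\right)^{n}=\left(1-p+pe^{s}\right)^{n}$. Applying the elementary inequality $1+u\le e^{u}$ with $u=p(e^{s}-1)$ then yields the clean bound $\mathbb{E}[e^{sZ}]\le e^{np(e^{s}-1)}$. This single inequality, together with Markov's inequality, is essentially all that is needed.

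For the upper tail I would fix $s>0$ and apply Markov's inequality to the nonnegative random variable $e^{sZ}$ to get $\mathbb{P}(Z\ge t)=\mathbb{P}(e^{sZ}\ge e^{st})\le e^{-st}\mathbb{E}[e^{sZ}]\le\exp\!\left(-st+np(e^{s}-1)\right)$. Then I would minimize the exponent $g(s):=-st+np(e^{s}-1)$ over $s$: solving $g'(s)=-t+npe^{s}=0$ gives $e^{s}=t/(np)$, which is an admissible choice (i.e. $s\ge 0$) precisely when $t\ge np$, the regime under consideration. Substituting this value collapses $g(s)$ to $t-np-t\ln\!\left(t/(np)\right)=\phi(t)$, giving $\mathbb{P}(Z\ge t)\le e^{\phi(t)}$ for $t\ge np$.

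For the lower tail I would run the symmetric argument on $-Z$: for $s>0$, $\mathbb{P}(Z\le t)=\mathbb{P}(e^{-sZ}\ge e^{-st})\le e^{st}\mathbb{E}[e^{-sZ}]\le\exp\!\left(st+np(e^{-s}-1)\right)$. Minimizing the exponent over $s$ yields the optimal choice $e^{-s}=t/(np)$, which satisfies $s\ge 0$ exactly when $0<t\le np$; substituting again produces the expression $\phi(t)$, hence $\mathbb{P}(Z\le t)\le e^{\phi(t)}$ for $0<t\le np$.

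I do not anticipate a genuine obstacle — this is a textbook computation. The one point requiring care is the nonnegativity constraint $s\ge 0$ in each optimization: it is exactly what forces the two estimates to be stated in the complementary ranges $t\ge np$ and $0<t\le np$, and one should verify the consistency at the shared endpoint, where $\phi(np)=0$ and both bounds correctly degenerate to the trivial $\mathbb{P}(\cdot)\le 1$. It is also worth noting that the lemma may simply be cited from \cite{Chernoff1952}; I include the self-contained derivation only because it is so short.
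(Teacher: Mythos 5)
Your derivation is correct and is the standard exponential-moment argument; the paper itself gives no proof for this lemma, simply citing \cite{Chernoff1952} and remarking that the proof is omitted, so there is nothing in the paper to compare against. The only small point worth noting is that you should also record what happens at $s=0$: when the optimizer $s^{*}$ is forced to $0$ (at $t=np$), the Markov bound degenerates to $\mathbb{P}(\cdot)\le 1 = e^{\phi(np)}$, so the inequality still holds with equality in the exponent, and strictly speaking one should take a supremum over $s\ge 0$ rather than blindly substitute the interior critical point — you flag this yourself, so the proof is complete.
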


This is the version of Chernoff's bound that will be adopted in this
article. Proof is omitted.

\newpage{}

\bibliographystyle{plain}
\bibliography{VoronoiPaperV2}

\end{document}